\theoremstyle{plain}
\newtheorem{theorem}{Theorem}[section]
\newtheorem{lemma}[theorem]{Lemma}
\newtheorem{corollary}[theorem]{Corollary}
\theoremstyle{definition}
\theoremstyle{remark}
\def\R{\mathbb R}
\def\al{\alpha}
\def\om{\omega}
\def\Om{\Omega}
\def\be{\beta}
\def\ga{\gamma}
\def\de{\delta}
\def\De{\Delta} 
\def\vep{\varepsilon}
\def\na{\nabla}
\def\pa{\partial}
\def\lt{\left}
\def\rt{\right}
\def\vep{\varepsilon}
\def\H1r{H^1_{0,{\rm rad}}(B)}
\def\Hmr{H^m_{0,{\rm rad}}(B)}
\def\Cr{C^\infty_{0,{\rm rad}}(B)}
\def\CHR{C_{\mathsf{HR}}(n,m)}
\def\Snm{{\mathsf S}_{n,m}}
\def\Smn{\Sigma_{n,m}}
\def\Sb{{\mathsf S}_{\beta}}
\def\Un{{\mathsf U}_{n,\alpha}}
\def\Unm{{\mathsf U}_{n,m,\alpha}}
\def\Unma{{\mathsf U}_{n,m,\alpha_i}}
\def\s2{2^\star}
\def\2ms{2_m^\star}
\numberwithin{equation}{section}
\def\cfac#1{\ifmmode\setbox7\hbox{$\accent"5E#1$}\else\setbox7\hbox{\accent"5E#1}\penalty 10000\relax\fi\raise 1\ht7\hbox{\lower1.05ex\hbox to 1\wd7{\hss\accent"13\hss}}\penalty 10000\hskip-1\wd7\penalty 10000\box7 }
\title[A supercritical Sobolev type inequality in higher order Sobolev spaces]{A supercritical Sobolev type inequality in higher order Sobolev spaces and related higher order elliptic problems}
\author[Q.A. Ng\^o]{Qu\cfac{o}c Anh Ng\^o}
\address[Q.A. Ng\^o]{Department of Mathematics\\
College of Science, Vi\^{e}t Nam National University\\
H\`{a} N\^{o}i, Vi\^{e}t Nam.}
\email{\href{mailto: Q.A. Ng\^o <nqanh@vnu.edu.vn>}{nqanh@vnu.edu.vn}}
\email{\href{mailto: Q.A. Ng\^o <bookworm\_vn@yahoo.com>}{bookworm\_vn@yahoo.com}}
\author[V.H. Nguyen]{Van Hoang Nguyen}
\address[V.H. Nguyen]{Institute of Mathematics\\
Vietnam Academy of Science and Technology\\
Hanoi, Vietnam.}
\email{\href{mailto: V.H. Nguyen <vanhoang0610@yahoo.com>}{vanhoang0610@yahoo.com}}
\email{\href{mailto: V.H. Nguyen <nvhoang@math.ac.vn>}{nvhoang@math.ac.vn}}
\begin{document}

\begin{abstract}
A Sobolev type embedding for radially symmetric functions on the unit ball $B$ in $\mathbb R^n$, $n\geq 3$, into the variable exponent Lebesgue space $L_{2^\star + |x|^\alpha} (B)$, $2^\star = 2n/(n-2)$, $\alpha>0$, is known due to J.M. do \'O, B. Ruf, and P. Ubilla, namely, the inequality
\[
 \sup\Big\{\int_B |u(x)|^{2^\star+|x|^\alpha} dx   :  u\in \H1r,   \|\nabla u\|_{L^2(B)} =1\Big\} < +\infty
 \]
holds. In this work, we generalize the above inequality for higher order Sobolev spaces of radially symmetric functions on $B$, namely, the embedding
\[
\Hmr \hookrightarrow L_{2_m^\star + |x|^\alpha} (B)
\] 
with $2\leq m < n/2$, $2_m^* = 2n/(n-2m)$, and $\alpha>0$ holds. Questions concerning the sharp constant for the inequality including the existence of the optimal functions are also studied. To illustrate the finding, an application to a boundary value problem on balls driven by polyharmonic operators is presented. This is the first in a set of our works concerning functional inequalities in the supercritical regime. 
\end{abstract}

\date{\bf \today \; at   \currenttime}

\subjclass[2000]{46E35, 26D10, 35J30}

\keywords{supercritical Sobolev inequality; sharp constant; optimizer; higher order elliptic problems}

\maketitle


\section{Introduction}

The Sobolev embedding is a basic tool in many aspect of mathematical analysis. The classical one provides an optimal embedding from the Sobolev space $H^1 (\Omega)$ into the Lebesgue spaces $L_p(\Omega)$ with $p\leq 2^\star := 2n/(n-2)$, where $\Omega \subset \R^n$ with $n \geq 3$ is a bounded domain. If working in a larger class of ``rearrangement invariant'' Banach spaces rather than the class of $L_p$-spaces, the optimal exponent $2^\star$ can be slightly improved. For example, the following embedding is well-known
\[
H^1 (\Omega) \hookrightarrow L_{2^\star, 2} (\Omega),
\]
where $L_{2^\star, 2} (\Omega)$ is the well-known Lorentz space. In the literature, Sobolev embedding into non-rearrangement invariant spaces has recently captured attention. By choosing the variable exponent Lebesgue spaces $L_{p(x)}(B)$ as target spaces, where $B$ is the unit ball in $\R^n$, the authors in \cite{doO} are able to go beyond the critical threshold $2^\star$ when restricting to $H_{0, \rm rad}^1(B)$ the first order Sobolev space of radially symmetric functions about the origin. This special space is simply the completion of $\Cr$ under the norm
\[
\|u\|_{H^1_{0, \rm rad} (B)} = \Big( \int_B |\nabla u|^2 dx \Big) ^{1/2},
\]
where we denote by $\Cr$ the class of compactly supported, smooth, radially symmetric functions about the origin in $B$. The primary result in \cite{doO} states that given $\alpha > 0$ there exists a positive constant $\Un$ such that the supercritical Sobolev inequality
\begin{equation}\label{SobolevO}
\Un := \sup\Big\{\int_B |u(x)|^{\s2+|x|^\alpha} dx   :  u\in \H1r,   \|\nabla u\|_{L^2(B)} =1\Big\} < +\infty
\end{equation}
holds for any $u \in H_{0, \rm rad}^1(B)$. In other words, there is a continuous embedding
\[
H_{0, \rm rad}^1(B) \hookrightarrow L_{2^\star + |x|^\alpha} (B),
\]
where $L_{2^\star + |x|^\alpha}$ is the variable exponent Lebesgue space defined by
\[
L_{2^\star+ |x|^\alpha}(B) :=\Big\{u: B\to \R \quad \text{\rm is measurable}  :  \int_B |u(x)| ^{2^\star + |x|^\alpha} dx < + \infty\Big\}
\]
with norm
\[
\|u\|_{L_{2^\star + |x|^\alpha}(B)} = \inf\Big\{\lambda >0  :  \int_B \Big|\frac{u(x)}\lambda\Big|^{2^\star + |x|^\alpha} dx \leq 1\Big\}.
\]
As an application of \eqref{SobolevO}, which is quite a surprise, the authors are able to prove that the following elliptic equation
\begin{equation}\label{EllipticO}
\left\{
\begin{aligned}
-\Delta u &= u^{2^\star + |x|^\alpha - 1} & \text{ in }& B,\\
u &>  0 & \text{ in } & B,\\
u &=  0 & \text{ on }& \partial B,
\end{aligned}
\right.
\end{equation}
admits at least one solution. This result is somewhat intriguing because if one replace $|x|^\alpha$ by any non-negative constant, then \eqref{EllipticO} has no solution by the classical result of Pohozaev.

In this work, motivated by the supercritical Sobolev inequality \eqref{SobolevO}, first we generalize \eqref{SobolevO} for higher order Sobolev space of radially symmetric functions leading us to the following continous embedding
\[
\Hmr \hookrightarrow L_{\2ms + |x|^\alpha} (B),
\]
where the space $L_{\2ms + |x|^\alpha} (B)$ is precisely mentioned in Corollary \ref{varexspace} below. Then as an application of the inequality we present an existence result for solutions to the following polyharmonic equation
\begin{equation}\label{eq:supercriticaleq}
\left\{
\begin{aligned}
(-\De)^m u &=   u^{\2ms +|x|^\alpha -1} & \mbox{ in }& B,\\
u&>  0 & \mbox{ in } &B,\\
\pa^j_r u &=  0 & \mbox{ on } &\partial B, \quad j =0,\ldots,m-1 .
\end{aligned}
\right.
\end{equation}

To state our results, several notations and conventions are needed. First, for an integer $m \geq 1$, we denote
\[
\nabla^m = \begin{cases}
\De^{m/2} &\mbox{if $m$ is even},\\
\na \De^{(m-1)/2} &\mbox{if $m$ is odd}.
\end{cases}
\]
By $H^{m}_0(B)$ we mean the usual Sobolev space on $B$, which is the completion of $C_0^\infty(B)$ under the norm
\[
\|u\|_{H^{m}_0(B)} = \Big( \int_B |\nabla^ mu|^2 dx \Big) ^{1/2}.
\] 
Then analogue to $H_{0, \rm rad}^1(B)$, we denote by $\Hmr$ the completion of $\Cr$ with respect to the preceding norm. Given $\alpha>0$, we are interested in whether or not the following inequality
\begin{equation}\label{SobolevSuper}
\Unm := \sup\Big\{\int_B |u(x)|^{\2ms + |x|^\alpha} dx   :  u\in \Hmr,  \|\nabla^m u\|_{L^2(B)} \leq 1\Big\} < + \infty.
\end{equation}
holds for some constant $\Unm > 0$. Note that, in \eqref{SobolevSuper} and under the condition $n > 2m$, the number $\2ms = 2n/(n-2m)$ is also the critical exponent for the following Sobolev inequality with the sharp constant $\Snm$
\begin{equation}\label{SobolevInequality}
\|u\|_{L^{\2ms}(B)} \leq \Snm \|\nabla^m u\|_{L^2(B)}
\end{equation}
for any $u \in H^{m}_0(B)$. It is well-known that the sharp constant $\Snm$ can be characterized by
\[
\Snm = \sup\big\{\|u\|_{L^{\2ms}(B)}   :  u\in \Hmr,   \|\nabla^m u\|_{L^2(B)} =1\big\}
\]
(see the formulas \eqref{eq:tiemcan1} and \eqref{eq:tiemcan2} below) and if we let
\begin{equation*}
\Smn := \sup\Big\{\int_B |u(x)|^{\2ms} dx   :  u\in \Hmr,   \|\nabla u\|_{L^2(B)} =1\Big\} ,
\end{equation*}
then we immediately have
\[
\Smn = \Snm^{\2ms}.
\]
The first main result in this paper answers the above question affirmatively.

\begin{theorem}\label{Supercritical}
Let $1 \leq m < n/2$ and $\alpha >0$. Then
\begin{equation*}\label{eq:SuperSobolev}
\sup\Big\{\int_B |u(x)|^{\2ms + |x|^\alpha} dx   :  u\in \Hmr,  \|\nabla^m u\|_{L^2(B)} \leq 1\Big\} < + \infty.
\end{equation*}
\end{theorem}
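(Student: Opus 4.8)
The plan is to exploit the radial symmetry to reduce the problem to a one-dimensional estimate, and then to split the ball $B$ into two regions: an annular region away from the origin, where $|x|^\alpha$ is bounded below by a positive constant, and a small ball around the origin, where $|x|^\alpha$ is small and the exponent $\2ms + |x|^\alpha$ is therefore only slightly supercritical. On the annulus $\{|x| \ge \rho\}$ for fixed $\rho \in (0,1)$, radial functions in $\Hmr$ enjoy much better integrability: away from the origin there is no singularity obstructing the embedding, and in fact on $\{\rho \le |x| \le 1\}$ one has a bound of the form $\|u\|_{L^\infty(\{|x|\ge\rho\})} \le C(\rho,n,m)\|\nabla^m u\|_{L^2(B)}$ via the radial lemma / ODE representation for $u(r)$ in terms of $\nabla^m u$; since $2\le \2ms + |x|^\alpha \le \2ms + 1$ is bounded there, $\int_{\{|x|\ge\rho\}} |u|^{\2ms+|x|^\alpha}\,dx$ is controlled. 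So the whole difficulty is concentrated on a neighborhood $B_\rho$ of the origin.

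On $B_\rho$, the first step is to derive a pointwise radial estimate: writing $u = u(r)$ and integrating the ODE satisfied by the radial polyharmonic operator, one obtains a bound of Strauss/Ni type, namely $|u(r)| \le C\, r^{-(n-2m)/2}\,\|\nabla^m u\|_{L^2(B)}$ for $0 < r < \rho$ (this is precisely the borderline decay matching the critical exponent $\2ms$). Feeding this into the integral and passing to polar coordinates, $\int_{B_\rho}|u|^{\2ms+|x|^\alpha}\,dx \lesssim \int_0^\rho r^{-\frac{n-2m}{2}(\2ms+r^\alpha)}\,r^{n-1}\,dr$ modulo the $\|\nabla^m u\|_{L^2}$ normalization. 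Since $\frac{n-2m}{2}\cdot \2ms = n$, the exponent of $r$ in the integrand is $n-1-n-\frac{n-2m}{2}r^\alpha = -1 - \frac{n-2m}{2}r^\alpha$, which is worse than $-1$, so this crude bound diverges and must be refined: the naive pointwise bound loses too much near $r=0$.

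The refinement I expect to be the crux is to interpolate the pointwise bound against the critical integrability. One keeps a fraction of the $L^{\2ms}$ mass of $u$ and uses the pointwise bound only for the "excess" exponent $|x|^\alpha$: schematically, on $B_\rho$ write $|u|^{\2ms+|x|^\alpha} = |u|^{\2ms}\cdot |u|^{|x|^\alpha}$ and bound the second factor using $|u(r)|^{r^\alpha} \le (C r^{-(n-2m)/2})^{r^\alpha} = C^{r^\alpha} r^{-\frac{n-2m}{2}r^\alpha}$, where $C$ has absorbed the $\|\nabla^m u\|_{L^2}\le 1$ normalization and may be taken $\ge 1$ after shrinking $\rho$. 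The key elementary fact is that $r^{-\frac{n-2m}{2}r^\alpha}=\exp\big(-\tfrac{n-2m}{2}r^\alpha\ln r\big)$ stays bounded on $(0,\rho)$ because $r^\alpha\ln r \to 0$ as $r\to 0^+$; likewise $C^{r^\alpha}$ is bounded. Hence $|u(x)|^{|x|^\alpha}\le C'(\rho,n,m,\alpha)$ uniformly on $B_\rho$, and therefore $\int_{B_\rho}|u|^{\2ms+|x|^\alpha}\,dx \le C' \int_{B_\rho}|u|^{\2ms}\,dx \le C'\,\Smn$ by the critical Sobolev inequality \eqref{SobolevInequality}. Combining the annular and central estimates yields a finite bound for $\Unm$, uniform over $\|\nabla^m u\|_{L^2(B)}\le 1$.

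A technical point to handle carefully is the validity of the pointwise radial estimate $|u(r)|\le C r^{-(n-2m)/2}\|\nabla^m u\|_{L^2}$ for $u$ merely in $\Hmr$ when $m\ge 2$: one should first prove it for $u\in\Cr$ via the iterated ODE $u(r) = -\int_r^1 \cdots$ together with Hölder in the radial variable with weight $r^{n-1}$, and then pass to the limit by density, checking that both sides are continuous in the $\Hmr$-norm on compact subsets of $B\setminus\{0\}$ away from the origin and behave correctly as $r\to 0$. A secondary subtlety is the boundary behavior near $|x|=1$: for $m\ge 2$, $u\in\Hmr$ only forces $\partial_r^j u=0$ at $\partial B$ for $j\le m-1$, but this does not affect the interior decay estimate, which is what we need. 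I do not expect the case $m=1$ to require anything beyond \cite{doO}; the work is entirely in making the $m\ge 2$ iteration and the density argument clean. The main obstacle, then, is establishing the sharp pointwise decay for higher-order radial functions and justifying the interchange of limits; once that is in hand, the splitting-and-interpolation argument above closes quickly.
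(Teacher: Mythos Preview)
Your proposal is correct and follows essentially the same strategy as the paper: obtain the pointwise radial decay $|u(r)| \le C\, r^{-(n-2m)/2}$, use it to bound the excess factor $|u(r)|^{r^\alpha}$ uniformly (since $r^\alpha \ln r \to 0$ as $r\to 0^+$), and then control $\int_B |u|^{\2ms}\,dx$ by the critical Sobolev inequality. The one substantive difference in execution is how the pointwise bound is obtained for $m\ge 2$: rather than the ``iterated ODE'' you sketch, the paper invokes the Hardy--Rellich inequality (Lemma~\ref{HRhigher}) to get $\int_0^1 (u'(r))^2\, r^{\,n-2m+1}\,dr \le C\,\|\nabla^m u\|_{L^2(B)}^2$, after which the Strauss-type decay follows from a single H\"older step on $u(r)=-\int_r^1 u'(s)\,ds$. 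A change of variable $s=r^m$ then recasts everything as a first-order problem in fractional dimension $\beta=n/m$ (Lemma~\ref{fract}), which is exactly your splitting argument in disguise. This Hardy--Rellich route is cleaner than threading an iteration through the higher-order radial operator and fills precisely the ``technical point'' you flag as the main obstacle; otherwise the two arguments coincide.
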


Apparently, the case $m=1$ was already studied in \cite{doO}, our contribution is for the case $m \geq 2$. Clearly, a consequence of Theorem \ref{Supercritical} is that the space $\Hmr$ can be continuously embedded into the variable exponent Lebesgue space $L_{\2ms + |x|^\alpha} (B)$ mentioned earlier. An exact statement of this fact is as follows:

\begin{corollary}\label{varexspace}
Let $1 \leq m < n/2$ and $\alpha >0$. Then the following embedding is continuous
\[
\Hmr \hookrightarrow L_{\2ms + |x|^\alpha}(B),
\]
where $L_{\2ms + |x|^\alpha}$ is the variable exponent Lebesgue space defined by
\[
L_{\2ms + |x|^\alpha}(B) :=\Big\{u: B\to \R \quad \text{\rm is measurable}  :  \int_B |u(x)| ^{\2ms + |x|^\alpha} dx < + \infty\Big\}
\]
with norm
\[
\|u\|_{L_{\2ms + |x|^\alpha}(B)} = \inf\Big\{\lambda >0  :  \int_B \Big|\frac{u(x)}\lambda\Big|^{\2ms + |x|^\alpha} dx \leq 1\Big\}.
\]
\end{corollary}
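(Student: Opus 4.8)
The plan is to deduce Corollary~\ref{varexspace} directly from Theorem~\ref{Supercritical} by the standard scaling argument that converts a uniform modular bound into a norm bound. First I would observe that the set
\[
L_{\2ms + |x|^\alpha}(B) = \Big\{ u : B \to \R \text{ measurable} : \rho(u) := \int_B |u(x)|^{\2ms + |x|^\alpha}\,dx < +\infty \Big\}
\]
together with the Luxemburg-type gauge $\|u\|_{L_{\2ms + |x|^\alpha}(B)} = \inf\{\lambda>0 : \rho(u/\lambda)\le 1\}$ is a Banach space: this is the classical Musielak--Orlicz construction applied to the generalized $\Phi$-function $\Phi(x,t)=t^{\2ms+|x|^\alpha}$, which is, for each fixed $x$, convex and increasing in $t$ with $\Phi(x,0)=0$ and $\Phi(x,t)\to\infty$ as $t\to\infty$ (here $\2ms+|x|^\alpha\ge \2ms>1$ uniformly). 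I would either cite the standard reference for Musielak--Orlicz spaces or sketch that the triangle inequality follows from convexity of $t\mapsto\Phi(x,t)$ exactly as for ordinary Orlicz norms, and that the exponent being bounded away from $1$ and $\infty$ (since $\2ms \le \2ms+|x|^\alpha \le \2ms + 1$ on $B$) makes $\rho$ lower semicontinuous and the gauge well-defined and finite precisely on the stated set.

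The substantive point is the continuity of the embedding $\Hmr \hookrightarrow L_{\2ms+|x|^\alpha}(B)$, and here is where Theorem~\ref{Supercritical} enters. Write $C := \Unm < +\infty$ for the finite supremum in \eqref{SobolevSuper}. Given any nonzero $u \in \Hmr$, set $v = u/\|\nabla^m u\|_{L^2(B)}$, so $\|\nabla^m v\|_{L^2(B)} = 1$ and hence $\int_B |v|^{\2ms+|x|^\alpha}\,dx \le C$. The goal is to produce an explicit $\lambda$, proportional to $\|\nabla^m u\|_{L^2(B)}$, with $\rho(u/\lambda)\le 1$. The mild technical wrinkle compared to the constant-exponent case is that the exponent $\2ms+|x|^\alpha$ varies, so dividing by a large constant $\mu>1$ does not simply scale the modular by a single power of $\mu$; instead, for $\mu\ge 1$ and using $\2ms+|x|^\alpha \ge \2ms$ pointwise on $B$ one has $|w(x)/\mu|^{\2ms+|x|^\alpha} \le \mu^{-\2ms}|w(x)|^{\2ms+|x|^\alpha}$, which is exactly the inequality needed. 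Applying this with $w=v$ and $\mu = \max\{1, C^{1/\2ms}\}$ gives $\rho(v/\mu) \le \mu^{-\2ms}\,\rho(v) \le \mu^{-\2ms} C \le 1$, hence $\|v\|_{L_{\2ms+|x|^\alpha}(B)} \le \mu$, and therefore
\[
\|u\|_{L_{\2ms+|x|^\alpha}(B)} = \|\nabla^m u\|_{L^2(B)}\,\|v\|_{L_{\2ms+|x|^\alpha}(B)} \le \mu\,\|\nabla^m u\|_{L^2(B)},
\]
using positive homogeneity of the gauge. Since $\mu$ depends only on $n$, $m$, $\alpha$ through $C=\Unm$, this is a bounded linear embedding with operator norm at most $\mu$.

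I do not expect a genuine obstacle here; the only thing requiring a little care is to isolate cleanly the pointwise inequality $|t/\mu|^{p(x)} \le \mu^{-\2ms}|t|^{p(x)}$ valid for $\mu\ge1$ and $p(x)\ge \2ms$, and to make sure the degenerate case $u\equiv 0$ is handled separately (both sides vanish). One should also remark explicitly that $L_{\2ms+|x|^\alpha}(B)$ sits between $L^{\2ms}(B)$ and $L^{\2ms+\sup_B|x|^\alpha}(B) = L^{\2ms+1}(B)$ as sets, so that measurability and integrability are unambiguous, and that $\Cr$ is dense in $\Hmr$ so the embedding is determined by its action on smooth functions. The proof is thus a short, essentially formal consequence of Theorem~\ref{Supercritical} plus the definition of the Luxemburg gauge; I would present it as such, with the scaling lemma for variable exponents stated as a one-line observation.
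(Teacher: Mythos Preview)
Your proposal is correct and follows essentially the same route as the paper: normalize $v=u/\|\nabla^m u\|_{L^2(B)}$, invoke Theorem~\ref{Supercritical} to get $\rho(v)\le C$, then use the pointwise inequality $\mu^{-p(x)}\le \mu^{-\2ms}$ for $\mu\ge 1$ and $p(x)\ge \2ms$ to choose $\mu\ge C^{1/\2ms}$ with $\rho(v/\mu)\le 1$, concluding $\|u\|_{L_{\2ms+|x|^\alpha}(B)}\le \mu\,\|\nabla^m u\|_{L^2(B)}$. The paper's argument is identical up to notation (it writes $\lambda_*$ for your $\mu$ and works with $u/(\lambda_*\|\nabla^m u\|)$ directly rather than invoking homogeneity of the gauge), and it also states the result for general exponents $\2ms+f(|x|)$; your additional remarks on the Musielak--Orlicz structure are accurate but not needed for the proof.
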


In view of Theorem \ref{Supercritical}, there exists a sharp constant $\Unm > 0$ as already given in \eqref{SobolevSuper}. In this sense, it is natural to ask whether or not the sharp constant $\Unm$ is attained. To obtain the attainability of the sharp constant $\Unm$ and inspired by \cite[Theorem 1.3]{doO}, we first establish certain estimates between $\Unm$ and $\Smn$ as shown in the following.

\begin{theorem}\label{Strict}
Let $1 \leq m < n/2$ and $\alpha >0$. Then, there always holds
\begin{equation}\label{U>=S}
\Unm \geq \Smn
\end{equation}
Moreover, if 
\begin{equation}\label{eq:alphacond}
0 < \alpha \leq n- 2m,
\end{equation}
then there holds
\begin{equation}\label{U>S}
\Unm > \Smn .
\end{equation}
Finally, the following limit
\begin{equation}\label{U-infinity}
\lim_{\alpha \nearrow +\infty} \Unm = \Smn
\end{equation}
occurs.
\end{theorem}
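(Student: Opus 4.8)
The plan is to prove the three assertions of Theorem~\ref{Strict} separately, since they are of rather different natures.

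\textbf{Step 1: The trivial bound $\Unm \geq \Smn$.} For any radial $u \in \Hmr$ with $\|\nabla^m u\|_{L^2(B)} = 1$ we have, pointwise on $B$, the elementary inequality $|u(x)|^{\2ms + |x|^\alpha} \geq |u(x)|^{\2ms} \cdot \min\{1, |u(x)|^{|x|^\alpha}\}$, which is not quite what we want since $|u|$ can be large. The cleaner route: approximate. Fix $\varepsilon > 0$ and take a fixed test function $u_0$ with $\|\nabla^m u_0\|_{L^2} = 1$ and $\int_B |u_0|^{\2ms}\,dx \geq \Smn - \varepsilon$. For $t \in (0,1)$ consider $t u_0$; then $\|\nabla^m(tu_0)\|_{L^2} = t \leq 1$ is admissible in \eqref{SobolevSuper}, and
\[
\int_B |t u_0(x)|^{\2ms + |x|^\alpha}\,dx = \int_B t^{\2ms + |x|^\alpha} |u_0(x)|^{\2ms+|x|^\alpha}\,dx \xrightarrow{\ t \nearrow 1\ } \int_B |u_0(x)|^{\2ms+|x|^\alpha}\,dx \geq \int_B |u_0(x)|^{\2ms}\,dx
\]
by monotone/dominated convergence (the integrand is dominated once $t$ is bounded below, using $u_0 \in L_{\2ms+|x|^\alpha}$ from Corollary~\ref{varexspace}), and the last inequality holds because for radial $u_0 \in \Hmr$ one has $\|u_0\|_{L^\infty} < \infty$, so after scaling $u_0$ by a further fixed constant we may assume $|u_0| \leq 1$ on $B$, whence $|u_0|^{\2ms + |x|^\alpha} \geq |u_0|^{\2ms}$. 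Taking $\varepsilon \to 0$ and noting scaling invariance of the ratio gives \eqref{U>=S}. This step is routine.

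\textbf{Step 2: The strict inequality under \eqref{eq:alphacond}.} This is the main obstacle. The idea, following \cite[Theorem 1.3]{doO}, is to take a near-optimizer for $\Smn$ — concretely a suitably truncated and rescaled Talenti-type bubble realizing the sharp constant $\Snm$ in \eqref{SobolevInequality} — and perturb it to gain a strictly positive amount in the supercritical functional. Since the extremal profile for $\Smn$ on the ball is not attained (the Sobolev constant on a bounded domain is not achieved), we work with a concentrating family $U_{n,m,\alpha_i}$ (notation $\Unma$ is reserved in the paper) of the form $U_\delta(x) = \delta^{-(n-2m)/2} U(x/\delta)$ suitably cut off, with $U$ the entire extremal. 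The key computation is to expand
\[
\int_B |U_\delta(x)|^{\2ms + |x|^\alpha}\,dx - \int_B |U_\delta(x)|^{\2ms}\,dx = \int_B |U_\delta(x)|^{\2ms}\bigl(|U_\delta(x)|^{|x|^\alpha} - 1\bigr)\,dx
\]
and show the right-hand side is bounded below by a positive quantity for $\delta$ small, using that near the concentration point $|U_\delta| \to \infty$ so $|U_\delta|^{|x|^\alpha} - 1 > 0$ there, while the region where $|U_\delta| < 1$ contributes a lower-order negative error. The constraint $\alpha \leq n - 2m$ enters precisely in comparing the rate at which $|x|^\alpha \to 0$ against the rate at which $\log|U_\delta(x)| \to \infty$ as $\delta \to 0$: with $|U_\delta| \sim \delta^{-(n-2m)/2}$ on a ball of radius $\sim \delta$, one needs $|x|^\alpha \log |U_\delta| \sim \delta^\alpha \cdot \tfrac{n-2m}{2}\log(1/\delta)$ to not vanish too fast relative to the mass, and $\alpha \leq n-2m$ is the threshold making the gain survive the $L^2$-renormalization $\|\nabla^m U_\delta\|_{L^2} \to \Snm^{-1/\2ms}$-type normalization. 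I would carry this out by: (a) recalling the precise asymptotics \eqref{eq:tiemcan1}–\eqref{eq:tiemcan2} for $\Snm$ on the ball; (b) choosing the cutoff scale and concentration scale $\delta$; (c) splitting $B$ into $\{|x| < R\delta\}$, an intermediate annulus, and the far region, estimating each; (d) showing the positive contribution from the inner region dominates; (e) dividing by the (slightly subunital) Dirichlet norm to produce an admissible competitor for $\Unm$ strictly exceeding $\Smn$. The delicate point is bookkeeping the logarithmic factor from $a^b = e^{b\log a}$ with $b = |x|^\alpha$ small and $a = |U_\delta|$ large, and this is where condition \eqref{eq:alphacond} is not merely convenient but genuinely used.

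\textbf{Step 3: The limit $\lim_{\alpha \nearrow +\infty}\Unm = \Smn$.} Combined with Step~1, it suffices to prove $\limsup_{\alpha\to\infty}\Unm \leq \Smn$. Fix $\eta > 0$. The point is that for $\alpha$ large the weight $|x|^\alpha$ is $\leq \eta$ outside the thin shell $\{|x| > (1-\text{small})\}$ and is close to $1$ only in an annulus of measure $O((\log(1/\eta))/\alpha) \to 0$ near $\partial B$. For admissible $u$ (with $\|\nabla^m u\|_{L^2}\le 1$, so $\|u\|_{L^{\2ms}} \leq \Snm$), split $\int_B |u|^{\2ms+|x|^\alpha}\,dx$ over $\{|u|\leq 1\}$, where $|u|^{\2ms+|x|^\alpha}\leq |u|^{\2ms}$, and $\{|u|>1\}$, where $|u|^{\2ms+|x|^\alpha} \leq |u|^{\2ms}\cdot|u|^{|x|^\alpha}$; on the latter use Hölder with a large exponent together with the decay of the measure of $\{|x|^\alpha \geq \eta\}$ and the uniform higher integrability of $u$ coming from Theorem~\ref{Supercritical} (finiteness of $\Unm[{\alpha_0}]$ for one fixed $\alpha_0$ gives a uniform $L_{\2ms+|x|^{\alpha_0}}$ bound, hence a uniform bound slightly above $L^{\2ms}$ on the near-boundary shell). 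Letting $\alpha\to\infty$ then $\eta\to 0$ yields $\limsup_{\alpha\to\infty}\Unm \leq \Smn$, completing \eqref{U-infinity}. This step is technical but follows a standard dominated-convergence-with-uniform-bound pattern; the only care needed is that the competitor set depends on $\alpha$ only through the normalization $\|\nabla^m u\|_{L^2} \leq 1$, which is $\alpha$-independent, so no diagonal argument is required.
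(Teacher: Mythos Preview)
Your Step~1 contains a genuine sign error that breaks the argument. You claim that after scaling so that $|u_0|\le 1$ on $B$ one has $|u_0|^{\2ms+|x|^\alpha}\ge |u_0|^{\2ms}$; in fact the inequality goes the other way, since for $0\le a\le 1$ and $p\le q$ one has $a^q\le a^p$. Moreover, scaling $u_0$ down to force $|u_0|\le 1$ destroys its near-optimality for $\Smn$ as a competitor with $\|\nabla^m u\|_{L^2}\le 1$: you would be left with $\int_B|cu_0|^{\2ms}=c^{\2ms}(\Smn-\varepsilon)$, which is small. The ``scaling invariance of the ratio'' you invoke applies to the homogeneous Sobolev quotient, not to the constrained supremum defining $\Unm$. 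The paper's proof of \eqref{U>=S} avoids this by testing with concentrating bubbles $\bar u_\varepsilon=\Snm^{n/(2m)}\eta u_\varepsilon^*$: the crucial point is that almost all of the $L^{\2ms}$ mass of $\bar u_\varepsilon$ lives where $|\bar u_\varepsilon|>1$, so the pointwise inequality works in the favourable direction there, and Lemma~\ref{Tiemcan} makes this precise. In short, Step~1 is \emph{not} routine and needs the same bubble machinery as Step~2.

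Your Step~2 is correct in outline and matches the paper: the asymptotic expansion of $\int_B|\bar u_\varepsilon|^{\2ms+|x|^\alpha}dx$ (Lemma~\ref{Tiemcan}) produces a gain of order $\varepsilon^\alpha|\ln\varepsilon|$ over $\Smn$, and comparing with the $O(\varepsilon^{n-2m})$ error from the gradient normalisation \eqref{eq:tiemcan1} is exactly where the condition $\alpha\le n-2m$ enters.

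Your Step~3 is different from the paper's and, as written, incomplete. The paper does not look at the thin shell near $\partial B$; instead it uses the Hardy--Rellich inequality (Lemma~\ref{HRhigher}) to obtain the uniform radial decay $|u(x)|\le C|x|^{-(n-2m)/2}$ for all admissible $u$, and then observes that on a small ball $B_s$ one has $|u(x)|^{|x|^{\alpha}}\le (Cs^{-(n-2m)/2})^{s^{\alpha}}\le (Cs^{-(n-2m)/2})^{s}\to 1$ as $s\to 0$, \emph{uniformly in} $\alpha>1$. This gives $\int_{B_s}|u|^{\2ms+|x|^\alpha}\le(1+\varepsilon)\int_B|u|^{\2ms}\le(1+\varepsilon)\Smn$, while on $B\setminus B_s$ dominated convergence handles the rest. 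Your proposed route via H\"older and ``uniform higher integrability from a fixed $\alpha_0$'' would need to control $|u|^{|x|^\alpha}$ on $\{|u|>1\}$ without the pointwise decay, and it is not clear how Theorem~\ref{Supercritical} for a single $\alpha_0$ yields the required uniformity. The radial decay estimate is really the key ingredient here, and you should use it.
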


In view of \eqref{U>=S}, it is now clear to see how reasonable the condition ${\mathsf U}_n > \Sigma_n$ appearing in \cite[Theorem 1.4]{doO} is. Compared to \cite[Theorem 1.3]{doO}, it is clear that, even when $m=1$, which was also studied in \cite{doO}, the range for $\alpha$ in \eqref{eq:alphacond} is significant improved.

Then the following result provides us a criteria in which the sharp constant $\Unm$ is attained.

\begin{theorem}\label{Attain}
Let $1 \leq m < n/2$ and $\alpha >0$. If
\[
\Unm > \Smn,
\] 
then the sharp constant $\Unm$ is attained.
\end{theorem}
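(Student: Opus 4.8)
The plan is to use the direct method in the calculus of variations applied to a maximizing sequence for $\Unm$, with the strict inequality $\Unm > \Smn$ serving exactly to rule out the only possible loss of compactness. Let $(u_k) \subset \Hmr$ be a maximizing sequence with $\|\nabla^m u_k\|_{L^2(B)} \le 1$ and $\int_B |u_k|^{\2ms + |x|^\alpha}\,dx \to \Unm$. Since the sequence is bounded in $\Hmr$, up to a subsequence we may assume $u_k \rightharpoonup u$ weakly in $\Hmr$, $u_k \to u$ strongly in $L^q(B)$ for every $q < \2ms$ (Rellich--Kondrachov for the higher-order space), and $u_k \to u$ a.e. on $B$. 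The candidate maximizer is this weak limit $u$; the two things to verify are that the functional does not lose mass in the limit and that the constraint is not violated, i.e. $\|\nabla^m u\|_{L^2(B)} \le 1$, the latter being immediate from weak lower semicontinuity of the norm.

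First I would split the domain: fix a small $\rho > 0$ and write $\int_B = \int_{B_\rho} + \int_{B \setminus B_\rho}$. On the annulus $B \setminus B_\rho$ the exponent $\2ms + |x|^\alpha$ is bounded and the embedding into $L^{\2ms + |x|^\alpha}(B\setminus B_\rho)$ is compact away from the critical borderline (one can dominate by a subcritical exponent near the critical one using the radial decay/boundedness of radial $\Hmr$ functions outside the origin, or interpolate), so $\int_{B\setminus B_\rho} |u_k|^{\2ms+|x|^\alpha}\,dx \to \int_{B\setminus B_\rho} |u|^{\2ms+|x|^\alpha}\,dx$. The only obstruction to concentration is therefore at the origin, inside $B_\rho$, where the exponent drops exactly to $\2ms$. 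There, by the definition of $\Smn$ applied to $u_k$ restricted near the origin together with $|x|^\alpha \le \rho^\alpha$, one gets $\int_{B_\rho} |u_k|^{\2ms+|x|^\alpha}\,dx \le \rho^{?}\cdot(\text{error}) + \Smn \|\nabla^m u_k\|_{L^2(B_\rho)}^{\2ms} + o(1)$; the precise bookkeeping here is the main technical point and I expect to invoke a Brezis--Lieb type decomposition. Writing $u_k = u + v_k$ with $v_k \rightharpoonup 0$, one has $\|\nabla^m u_k\|_2^2 = \|\nabla^m u\|_2^2 + \|\nabla^m v_k\|_2^2 + o(1)$ and, by Brezis--Lieb for the variable-exponent integrand (justified via a.e.\ convergence plus the domination of $\2ms + |x|^\alpha$ by a fixed exponent on compact pieces, handling $B_\rho$ separately), $\int_B |u_k|^{\2ms+|x|^\alpha} = \int_B |u|^{\2ms+|x|^\alpha} + \int_B |v_k|^{\2ms+|x|^\alpha} + o(1)$.

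The endgame is then a convexity/concentration dichotomy. Let $a = \|\nabla^m u\|_2^2$ and $b = \limsup_k \|\nabla^m v_k\|_2^2$, so $a + b \le 1$. For the vanishing-in-the-limit part $v_k$, the mass it carries can only escape to the origin, and there the exponent is genuinely $\2ms$, so $\limsup_k \int_B |v_k|^{\2ms+|x|^\alpha}\,dx \le \Smn\, b^{\2ms/2}$ by the definition of $\Smn$ (after a localization argument on $B_\rho$ and letting $\rho \to 0$). Meanwhile $\int_B |u|^{\2ms+|x|^\alpha}\,dx \le \Unm\, a^{\2ms/2}$ if $a>0$, again by definition of $\Unm$ and homogeneity. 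Combining, $\Unm \le \Unm\, a^{\2ms/2} + \Smn\, b^{\2ms/2}$. Since $\2ms/2 > 1$, the function $t \mapsto t^{\2ms/2}$ is strictly convex, and using $\Smn < \Unm$ one shows this forces $b = 0$ and $a = 1$: indeed if $b>0$ then $\Unm\,a^{\2ms/2} + \Smn\, b^{\2ms/2} < \Unm(a^{\2ms/2} + b^{\2ms/2}) \le \Unm(a+b) \le \Unm$, a contradiction; and $b=0$ forces $a^{\2ms/2}\ge 1$, hence $a = 1$. Therefore $v_k \to 0$ strongly in the relevant sense, $\int_B |u|^{\2ms+|x|^\alpha}\,dx = \Unm$ and $\|\nabla^m u\|_2 = 1$, so $u$ is an optimizer. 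I expect the main obstacle to be the careful justification of the Brezis--Lieb splitting and the localization estimate near the origin for the variable exponent integrand --- in particular verifying that no mass of $v_k$ survives on the annulus and that the borderline behavior at $|x| = 0$ contributes only through the constant $\Smn$ --- rather than the final convexity argument, which is the same as in the $m=1$ case of \cite{doO}.
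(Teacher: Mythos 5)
Your proposal is correct, and it rests on exactly the same ingredients as the paper's proof: the direct method for a normalized maximizing sequence, the Brezis--Lieb decomposition in the variable exponent space (Lemma \ref{BL}), the pointwise radial decay $|v(x)|\le C|x|^{-(n-2m)/2}$ obtained from the Hardy--Rellich inequality, and the hypothesis $\Unm>\Smn$ to exclude concentration at the origin. The bookkeeping is organized differently, though. The paper first rules out $u\equiv 0$ by contradiction: it localizes the \emph{full} sequence $u_j$ near the origin with a cut-off, uses the radial decay to get $\int_B(\eta_\delta|u_j|)^{\2ms+|x|^\alpha}dx\le(1+\varepsilon)\Smn$ and the dominated convergence theorem on the complement, concluding $\Unm\le(1+\varepsilon)\Smn$; then, in the dichotomy step, it bounds \emph{both} pieces of the Brezis--Lieb splitting by $\Unm$ times the appropriate power of the gradient mass and reaches the contradiction $1\le(1-a^2)^{\2ms/2}+(a^2)^{\2ms/2}$ from superadditivity alone, never invoking $\Smn$ again. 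You instead prove the sharper estimate $\limsup_k\int_B|v_k|^{\2ms+|x|^\alpha}dx\le\Smn\, b^{\2ms/2}$ for the vanishing part $v_k=u_k-u$ --- which is the paper's localization argument applied to $v_k$ rather than to $u_j$, and is justified by the same uniform radial bound plus the Sobolev inequality $\int_B|v_k|^{\2ms}dx\le\Smn\|\nabla^m v_k\|_{L^2(B)}^{\2ms}$ --- and then the single inequality $\Unm\le\Unm\,a^{\2ms/2}+\Smn\,b^{\2ms/2}$ delivers both $u\not\equiv 0$ and $b=0$ at once. Both routes are valid and of comparable length; yours is marginally more unified, while the paper's isolates the use of the hypothesis $\Unm>\Smn$ in a single step. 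One small caveat: your bound $\int_B|u|^{\2ms+|x|^\alpha}dx\le\Unm\,a^{\2ms/2}$ is not pure homogeneity (the integrand is not homogeneous); it requires $a\le 1$ so that $a^{|x|^\alpha/2}\le1$, which is available since $a+b\le1$, and the paper performs the identical computation in its Case 1.
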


Combining Theorems \ref{Strict} and \ref{Attain} we deduce that the sharp constant $\Unm$ is attained if $0 < \alpha \leq n-2m$ and it is likely that the sharp constant $ \Smn$ serves as a threshold for the existence of optimizers for $\Unm$. Although we cannot say any about the inequality \eqref{U>S} whenever $\alpha > n-2m$, the limit in \eqref{U-infinity} might lead us to a non-existence of optimizers for $\Unm$ when $\alpha$ is very large. If this is not the case, we expect to see certain monotonicity of $\Unm$ with respect to $\alpha$; see \cite{VHN-PROCA} for related results. We take this chance to mention that in the literature a similar phenomenon appears in  the Adimurthi--Druet inequality, an improvement of the standard Moser--Trudinger inequality by adding a $L^2$-type perturbation; see \cite{MT2019}.

Finally, we study the existence of solutions to \eqref{eq:supercriticaleq}. Our existence result reads as follows.

\begin{theorem}\label{Superequation}
Let $1 \leq m < n/2$ and $0 < \alpha \leq n-2m$. Then there exists at least one weak solution to \eqref{eq:supercriticaleq}.
\end{theorem}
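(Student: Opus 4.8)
The plan is to realize a solution of \eqref{eq:supercriticaleq} as a nontrivial critical point of the energy functional
\[
J(u)=\frac12\int_B|\na^m u|^2\,dx-\int_B\frac{(u_+)^{p(x)}}{p(x)}\,dx,\qquad p(x):=\2ms+|x|^\al,\quad u_+:=\max\{u,0\},
\]
on the radial space $X:=\Hmr$; by Theorem~\ref{Supercritical} and the pointwise radial estimates behind it, $J$ is well defined and of class $C^1$ on $X$. Since both $J$ and the equation are invariant under rotations (the exponent $p(x)$ being radial), a radial-averaging argument shows that any critical point of $J|_X$ is a weak solution of $(-\De)^m u=(u_+)^{p(x)-1}$ with $\pa_r^j u=0$ on $\pa B$ for $j=0,\dots,m-1$; and, since $(-\De)^m$ on the ball with these boundary conditions is positivity preserving (Boggio's principle), a nonzero such $u$ satisfies $u\equiv u_+>0$ in $B$ and hence solves \eqref{eq:supercriticaleq}. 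So it suffices to produce a nontrivial critical point of $J$, which I would do via the mountain pass. We have $J(0)=0$; since $p(x)>2$ and $p(x)\ge\2ms$ on $B$, Theorem~\ref{Supercritical} together with a scaling gives $\int_B(u_+)^{p(x)}\,dx\le\Unm\,\|\na^m u\|_{L^2}^{\2ms}$ whenever $\|\na^m u\|_{L^2}\le1$, so $J(u)\ge\tfrac12\|\na^m u\|_{L^2}^2-\Unm\,\|\na^m u\|_{L^2}^{\2ms}\ge\be>0$ on a small sphere $\|\na^m u\|_{L^2}=\rho$; moreover $J(tu_0)\to-\infty$ as $t\to+\infty$ for any fixed $u_0\ge0$, $u_0\not\equiv0$. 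Let $c>0$ denote the resulting mountain-pass level. The proof then reduces to two points: (i) $J$ satisfies the Palais--Smale condition at level $c$ as soon as $c<c^\star:=\frac mn\,\Snm^{-n/m}$, the energy of the Aubin--Talenti bubble for $(-\De)^m$ on $\R^n$; and (ii) $c<c^\star$.

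For (i): a $(PS)_c$ sequence $(u_k)$ is bounded in $X$, since the Ambrosetti--Rabinowitz condition holds with exponent $\2ms$ --- with $f(x,s):=s_+^{p(x)-1}$ and $F(x,s):=s_+^{p(x)}/p(x)$ one has $sf(x,s)=p(x)F(x,s)\ge\2ms\,F(x,s)$ for $s\ge0$ --- so that $J(u_k)-\tfrac1{\2ms}\la J'(u_k),u_k\ra\ge\tfrac mn\|\na^m u_k\|_{L^2}^2$ bounds the norm; extracting a weakly convergent subsequence $u_k\rightharpoonup u$, one verifies that $u$ is a weak solution. To upgrade to strong convergence I would rule out loss of compactness: radial functions are uniformly bounded on $\{|x|\ge\de\}$ in terms of $\|\na^m u\|_{L^2}$, and $X\hookrightarrow L^q(\{|x|\ge\de\})$ is compact for every $q<\infty$, so the only possible concentration occurs at the origin, where $p(x)$ degenerates exactly to the critical exponent $\2ms$. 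A concentration--compactness analysis in the spirit of Lions (with a Brezis--Lieb splitting of the nonlinear term) then shows that any such concentration carries an energy quantum at least $c^\star$, so that $c\ge J(u)+c^\star\ge c^\star$ --- contradicting $c<c^\star$. Hence $u_k\to u$ strongly, $J(u)=c>0$, so $u\not\equiv0$ is the desired critical point.

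For (ii) --- the decisive step, and the only place the hypothesis $0<\al\le n-2m$ enters --- I would invoke Theorem~\ref{Strict}: since $\al\le n-2m$ we have $\Unm>\Smn$, hence (choosing near-optimizers for $\Unm$ to be nonnegative, which is transparent from the construction behind Theorem~\ref{Strict}, and using the elementary fact $\Smn<1$) there is $v\in X$ with $v\ge0$, $\|\na^m v\|_{L^2}=1$, and $\Smn<A:=\int_B v^{p(x)}\,dx<1$. Set $g(t):=J(tv)$; since $t\mapsto tv$ is an admissible mountain-pass path, $c\le\max_{t\ge0}g(t)$, and $g$ attains this maximum at some $\tau>0$ with $\tau^2=\int_B\tau^{p(x)}v^{p(x)}\,dx$. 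From this identity and $p(x)\ge\2ms$,
\[
g(\tau)=\int_B\Big(\frac12-\frac1{p(x)}\Big)\tau^{p(x)}v^{p(x)}\,dx\le\Big(\frac12-\frac1{\2ms}\Big)\int_B\tau^{p(x)}v^{p(x)}\,dx=\frac mn\,\tau^2 ;
\]
moreover, if $\tau>1$ the same identity yields $\tau^2\ge\tau^{\2ms}A$, whence $\tau^2\le A^{-(n-2m)/(2m)}$, while if $\tau\le1$ then trivially $\tau^2\le1$. Since $A>\Smn$ and $\Smn<1$, in both cases $\tau^2<\Smn^{-(n-2m)/(2m)}=\Snm^{-n/m}$, so $c\le g(\tau)<\frac mn\,\Snm^{-n/m}=c^\star$, as required.

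I expect the main obstacle to be step~(i): establishing the Palais--Smale condition below $c^\star$ amounts to a Struwe-type global compactness statement for the variable-exponent functional $J$ on the radial class, which is genuinely delicate --- both because bubble decompositions for the polyharmonic operator $(-\De)^m$ are subtler than in the second-order case, and because one must check that the degeneration of $p(x)$ to $\2ms$ at the origin leaves the energy quantum $c^\star$ of a concentrating bubble unchanged. A secondary point requiring care is the positivity of the solution, handled above through Boggio's principle for $(-\De)^m$ on the ball.
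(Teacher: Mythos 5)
Your step (ii) --- the decisive upper bound $c<\frac mn\Snm^{-n/m}$ --- contains a sign error that breaks the argument. Since $p(x)=\2ms+|x|^\al\ge\2ms$, we have $\frac1{p(x)}\le\frac1{\2ms}$ and hence $\frac12-\frac1{p(x)}\ge\frac12-\frac1{\2ms}=\frac mn$; your displayed inequality goes the wrong way, and the identity at the critical point only yields
\[
g(\tau)=\int_B\Big(\frac12-\frac1{p(x)}\Big)\tau^{p(x)}v^{p(x)}\,dx=\frac mn\,\tau^2+\frac1{\2ms}\int_B\frac{|x|^\al}{p(x)}\,\tau^{p(x)}v^{p(x)}\,dx\;\ge\;\frac mn\,\tau^2,
\]
which is useless for an upper bound. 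For a fixed near-optimizer $v$ of $\Unm$ the positive correction term is a fixed quantity of order $\tau^2$, not controlled by the gap $\Snm^{-n/m}-A^{-(n-2m)/(2m)}$, so the shortcut via $\Unm>\Smn$ does not close. This is precisely why the paper does not use Theorem \ref{Strict} here: instead it tests the functional on the concentrating family $t_\vep\,\eta u_\vep^*$ and carries out the two-term expansions of Lemmas \ref{Tiemcan} and \ref{dgiatp}, obtaining $I(t_\vep u_\vep)=\frac mn\Snm^{-n/m}-\frac{\mathscr C_1}{\2ms}\vep^\al|\ln\vep|+O(\vep^{n-2m})+o(\vep^\al|\ln\vep|)$; the negative logarithmic term beats the error exactly because $\al\le n-2m$, and this is the only place that hypothesis enters. (Your case analysis for $\tau\le1$ also quietly assumes $\Smn<1$, which is not justified.)

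On step (i), you are attempting strictly more than is needed, and you rightly flag it as delicate: a Struwe-type global compactness statement for the polyharmonic, variable-exponent functional is not established anywhere and is not what the paper proves. The paper follows the Brezis--Nirenberg device: a Palais--Smale sequence at level $c$ is bounded (your Ambrosetti--Rabinowitz computation is fine), its weak limit $u$ is automatically a weak solution, and one only has to exclude $u\equiv0$. That exclusion uses the radial pointwise bound \eqref{eq:est11} coming from the Hardy--Rellich inequality: if $u\equiv0$, the variable exponent can be replaced by $\2ms$ up to a factor $1+o(1)$, the Sobolev inequality forces $l^2\ge\Snm^{-n/m}$ for $l=\lim\|\na^m u_j\|_{L^2}$, while the energy identities force $l^2=\frac nm c<\Snm^{-n/m}$, a contradiction. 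No strong convergence and no bubble decomposition are required. I recommend replacing both of your steps by these arguments; the positivity of the solution via Boggio's principle (\cite[Theorem 5.1]{GGS}) is handled as you propose.
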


To look for a solution to \eqref{eq:supercriticaleq}, we employ variational techniques. In this way, a solution to \eqref{eq:supercriticaleq} is found as a critical point of the associated Euler--Lagrange energy functional defined on $\Hmr$. In turn, such a solution is radially symmetric. Taking the recent work \cite{CLL18} into account, we expect to see more solution to \eqref{eq:supercriticaleq} instead of the radial ones.

This paper is organized as follows:

\tableofcontents


This is the first paper in a set of our works concerning functional inequalities in the supercritical regime. In the next paper \cite{NN19}, we shall address supercritical Moser--Trudinger inequalities.

\section{Preliminaries}

This section is to prepare some auxiliary results which will be used in the proof of the main Theorems. 

\subsection{Useful inequalities}

This subsection is devoted to useful inequalities. For clarity, let us denote the following constants. Given $a<n-4$, we let
\[
c_{n,a} = \Big( \frac{(n+a)(n-a -4)}4\Big)^2
\]
and let
\[
\CHR =
\left\{
\begin{aligned}
& \Big(\frac{n+2m-4}2 \Big)^2 \prod_{i=0}^{\lfloor m/2 \rfloor-2} c_{n,4i} &&\mbox{ if }m=2k,\\
& \Big(\frac{n-2}2 \Big)^2 \Big(\frac{n+2m-4}2 \Big)^2 \prod_{i=0}^{\lfloor m/2 \rfloor -2} c_{n,2+4i}  &&\mbox{ if } m = 2k+1,
\end{aligned}
\right.
\]
or more precisely,
\[
\CHR =
\left\{
\begin{aligned}
& \frac4{(n-4k)^2} \prod_{i=0}^{k-1} \frac{(n+4i)^2(n-4i-4)^2}{16} &&\mbox{ if }m=2k,\\
& \frac{(n+4k-2)^2}{(n-2)^2}\prod_{i=0}^{k-1} \frac{(n-2 + 4i)^2(n-2-4i)^2}{16} &&\mbox{ if } m = 2k+1.
\end{aligned}
\right.
\]
The first result is a sharp Hardy--Rellich inequality in $\Hmr$. Although our inequality is in the sharp form, technically, we do not really require such a form in our analysis.

\begin{lemma}[Hardy--Rellich inequality in $\Hmr$]\label{HRhigher}
 Let $2\leq m < n/2$. Then there holds
\begin{equation}\label{eq:HR}
\|\nabla^m u\|_{L^2(B)} \geq \CHR \int_B \frac{|\na u|^2}{|x|^{2(m-1)}} dx
\end{equation}
for any $u\in \Hmr$.
\end{lemma}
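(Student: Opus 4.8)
The plan is to derive \eqref{eq:HR} by a density argument followed by a finite iteration that removes one power of the Laplacian at each stage, every step being an elementary weighted inequality for radial functions in one variable. By density it suffices to prove \eqref{eq:HR} for $u\in\Cr$; then all the integrations by parts below are legitimate and all boundary terms at $|x|=1$ vanish, and the general case $u\in\Hmr$ follows afterwards from Fatou's lemma and the fact that $\Hmr$ is the completion of $\Cr$.

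Two ingredients drive the iteration. The first is a weighted Hardy inequality: for a radial $v$ vanishing near $\pa B$ and any $b<n-2$,
\[
\int_B \frac{|\na v|^2}{|x|^b}\,dx \;\ge\; \Big(\frac{n-b-2}2\Big)^2\int_B \frac{v^2}{|x|^{b+2}}\,dx ,
\]
which comes from a single integration by parts in the radial variable — the restriction $b<n-2$ is precisely what makes the boundary term at the origin vanish, so no hypothesis on $v(0)$ is needed. The second is a Laplacian-to-gradient inequality: expanding $\int_B|x|^{-a}|\De v|^2\,dx$ in polar coordinates, integrating by parts once, and then applying the weighted Hardy inequality to $\pa_r v$ gives, for any $a<n-2$,
\[
\int_B \frac{|\De v|^2}{|x|^a}\,dx \;\ge\; \Big(\frac{n+a}2\Big)^2 \int_B \frac{|\na v|^2}{|x|^{a+2}}\,dx .
\]
Composing this last inequality with the weighted Hardy inequality at exponent $b=a+2$ yields, for $a<n-4$, the two-step bound
\[
\int_B \frac{|\De v|^2}{|x|^a}\,dx \;\ge\; c_{n,a}\int_B \frac{v^2}{|x|^{a+4}}\,dx , \qquad c_{n,a}=\Big(\frac{(n+a)(n-a-4)}4\Big)^2 .
\]

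The iteration is then straightforward. If $m=2k$ is even, then $\na^m u=\De^k u$; applying the two-step bound in turn to $v=\De^{k-1}u,\De^{k-2}u,\dots,\De u$ with weights $a=0,4,\dots,4(k-2)$ (all below $n-4$ because $m<n/2+2$) strips off the Laplacians one at a time, after which a single use of the Laplacian-to-gradient inequality at weight $2m-4<n-2$ converts the leftover $|\De u|^2$ into $|x|^{-2(m-1)}|\na u|^2$ at the cost of a factor $\big(\tfrac{n+2m-4}2\big)^2$; multiplying all the constants reproduces $\CHR$ exactly. If $m=2k+1$ is odd, then $\na^m u=\na\De^k u$, and one first applies the weighted Hardy inequality at $b=0$ to pass from $|\na\De^k u|^2$ to $\big(\tfrac{n-2}2\big)^2|x|^{-2}|\De^k u|^2$, then runs the same chain of two-step bounds with weights $a=2,6,\dots$ and finishes with a terminal Laplacian-to-gradient step; the constants again combine to $\CHR$. (When $m=2$, resp.\ $m=3$, the chain of two-step bounds is empty and the argument reduces to one, resp.\ two, elementary steps.) The two equivalent closed forms for $\CHR$ recorded before the lemma are precisely what this bookkeeping produces, and every admissibility condition ($a<n-4$, resp.\ $a<n-2$) is implied by $m<n/2$.

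The step demanding the most care is not any single estimate but the legitimacy of the repeated integrations by parts: the intermediate functions $\De^j u$ need not vanish at the origin, so one must check that each weighted integral occurring in the chain is locally integrable near $0$ and that each boundary term created there — a power such as $r^{n-2-a}$ or $r^{n-4-a}$ — genuinely tends to $0$. This is exactly where $m<n/2$ enters quantitatively, since it keeps all the weight exponents small enough; compact support of $u\in\Cr$ disposes of the boundary at $|x|=1$. Finally, pushing suitably truncated near-extremals of the form $|x|^{-\sigma}$ through each elementary step shows that the constant cannot be lowered, which gives the sharp statement announced before the lemma; as noted there, only the (non-sharp) inequality itself is used later.
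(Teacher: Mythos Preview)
Your proof is correct and follows essentially the same route as the paper: density to $\Cr$, the weighted Hardy inequality, the radial Laplacian-to-gradient inequality \eqref{eq:Rtype}, and then an iteration stripping off Laplacians. The only difference is cosmetic: where the paper outsources the multi-step iteration to \cite[Theorem~1.9]{TZ}, you carry it out by hand via your ``two-step bound'' $c_{n,a}$, which is precisely the content of that cited result; your bookkeeping of the constants and of the admissibility conditions on the weights is accurate.
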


\begin{proof}
By density, it suffices to prove that \eqref{eq:HR} holds for any $u \in \Cr$. First, we recall the following well-known weighted Hardy inequality
\begin{equation}\label{eq:Hardy}
 \int_B \frac{|\nabla u|^2}{|x|^a} dx 
 \geq \Big(\frac{n-2-a}2\Big)^2 \int_B \frac{u^2}{|x|^{a+2}} dx 
\end{equation}
for any $u \in C_0^\infty(B)$ and with $0\leq a <n-2$. 
We next provide a similar Rellich inequality that connects first to second order derivatives of radial functions, namely, for $0 \leq a < n-2 $, we shall prove
\begin{align}\label{eq:Rtype}
\int_B \frac{ (\Delta u)^2}{ |x|^a} dx  \geq \frac{(n+a)^2}4 \int_B \frac{|\na u|^2}{|x|^{a+2}} dx
\end{align}
holds for any $u \in \Cr$. Indeed, let $u \in \Cr$ and observe that $\Delta u = u''(r) + ((n-1)/r) u'(r)$. From this we obtain
\begin{align*}
\int_B \frac{ (\Delta u)^2}{ |x|^a} dx =& \omega_{n-1} \int_0^1 \Big( u''(r) + \frac{n-1} r u'(r) \Big) ^2 r^{n-a -1} dr\notag\\
 =&\omega_{n-1} \int_0^1 (u''(r))^2 r^{n-a-1} dr + (n-1) \omega_{n-1} \int_0^1 [(u'(r))^2]' r^{n-a -2} dr \notag\\
& + (n-1)^2 \omega_{n-1} \int_0^1 (u'(r))^2 r^{n-a-3} dr\notag\\
=& \omega_{n-1} \int_0^1 (u''(r))^2 r^{n-a-1} dr \notag \\
&+ (n-1)(a+1)\omega_{n-1} \int_0^1 (u'(r))^2 r^{n-a-3} dr\notag\\
 \geq &\frac{(n+a)^2}4 \omega_{n-1} \int_0^1 (u'(r))^2 r^{n-a-3} dr\notag\\
 =&\frac{(n+a)^2}4 \int_B \frac{|\na u|^2}{|x|^{a+2}} dx,
\end{align*}
where the third equality comes from integration by parts while the inequality comes from \eqref{eq:Hardy}. We are now in position to conclude the lemma. There are two possible cases:

\noindent\textbf{Case 1}. Suppose $m = 2k$ with $k \geq 1$. Using \cite[Theorem $1.9$(i)]{TZ} we get
\begin{align*}
\int_B (\Delta^k u)^2 dx \geq \Big(\prod_{i=0}^{k-2} c_{n,4i} \Big) \int_B \frac{(\Delta u)^2 }{|x|^{4(k-1)}} dx
\end{align*}
for any $u \in \Cr$. Combining the previous inequality with \eqref{eq:Rtype}, namely,
\[
\int_B \frac{(\Delta u)^2 }{|x|^{4(k-1)}} dx \geq \Big(\frac{n+4k-4}2 \Big)^2 \int_B \frac{|\nabla u|^2 }{|x|^{4k-2}} dx
\]
we arrive at \eqref{eq:HR} for $m$ even and for all $u \in \Cr$.

\noindent\textbf{Case 2}. Suppose $m =2k+1$ for some $k\geq 1$. By using \cite[Theorem $1.9$(ii)]{TZ}, we get
\begin{align*}
\int_B |\nabla \Delta^k u|^2 dx & \Big(\frac{n-2}2 \Big)^2 \Big(\prod_{i=0}^{k-2} c_{n,2+4i} \Big) \int_B \frac{ (\Delta u)^2 }{|x|^{ 4k-2}} dx.
\end{align*}
Finally, we make use of \eqref{eq:Rtype}, namely,
\[
\int_B \frac{(\Delta u)^2 }{|x|^{4k-2}} dx \geq \Big(\frac{n+4k-2}2 \Big)^2 \int_B \frac{|\nabla u|^2 }{|x|^{4k }} dx
\] 
to get the desired inequality.
\end{proof}

Before going futher, it is worth noting that without restricting to functions in $\Cr$, the sharp form of the inequality \eqref{eq:Rtype} for functions in $C_0^\infty(B)$ has already known and the sharp constant for \eqref{eq:Rtype} is technically complicated; see \cite[Theorem 1.7]{TZ}. To be more precise, the sharp constant computed by authors in \cite{TZ} is given by
\[
\min_{k = 0, 1, ...}\frac{\frac 1{16}(n+a)^2(n-4-a)^2+k(n-2+k)}{\frac 14 ( n-4-a)^2+k(n-2+k)},
\]
which cannot be bigger than $(n+a)^2/4$. As clearly demonstrated in \cite{TZ}, the sharp constant equals $(n+a)^2/4$ only if $a$ is closed to zero. However, in the case of radially symmetric functions, the sharp constant is always $(n+a)^2/4$ by testing the functions
\[
u_\epsilon(x) = (1-\varphi_\epsilon(x)) \varphi(x)|x|^{-(n-a-4)/2},
\]
where $\varphi$ is a cut-off function in $C_0^\infty(B)$ such that $0\leq \varphi \leq 1$, $\varphi(x) = 1$ if $|x| \leq 1/2$, and $\varphi_\epsilon(x) =\varphi(x/\epsilon)$. Similarly, the constant $\CHR$ in \eqref{eq:HR} is sharp. This can be verified by using the test functions 
\[
u_\epsilon(x) = (1-\varphi_\epsilon(x))\varphi(x)|x|^{-(n-2m)/2}.
\] 
 
Our next result is the sharp Sobolev inequality with fractional dimension $\beta$; see \cite{VHN} and references therein.

\begin{lemma}\label{SobolevFractional}
Let $\beta >2$. There exists a positive constant $\Sb>0$ such that
\begin{equation}\label{eq:fracdimSobolev}
\Sb \int_0^1 |u'(r)|^2 r^{\beta -1} dr \geq \Big( \int_0^1 |u(r)|^{\frac{2\beta}{\beta -2}} r^{\beta -1} dr \Big) ^{\frac{\beta -2}\beta}
\end{equation}
for $u \in C_0^\infty([0,1))$.
\end{lemma}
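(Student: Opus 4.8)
The plan is to reduce the claimed inequality \eqref{eq:fracdimSobolev} to the classical one-dimensional Bliss/Talenti-type Sobolev inequality on the half-line, or equivalently to the ordinary radial Sobolev inequality in the fractional dimension $\beta$. First I would record the change of variables that trivializes the weight: set $t = r^{\beta}$ (or a suitable power so that $r^{\beta-1}\,dr$ becomes a constant multiple of $dt$ up to a power of $t$), and let $v(t) = u(r)$. One then has $u'(r)\,dr$ and the two integrals $\int_0^1 |u'(r)|^2 r^{\beta-1}\,dr$ and $\int_0^1 |u(r)|^{2\beta/(\beta-2)} r^{\beta-1}\,dr$ transform into one-dimensional weighted integrals in $t$ on $[0,1)$ to which a Bliss inequality with exponents $2$ and $2\beta/(\beta-2)=2^{\star}_{\beta}$ (the ``Sobolev conjugate'' of $2$ in dimension $\beta$) applies. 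The exponent arithmetic is exactly that of the ordinary Sobolev embedding $H^1 \hookrightarrow L^{2^\star}$ with the integer $n$ replaced by the real number $\beta$, and the boundary condition $u\in C_0^\infty([0,1))$ (so $u$ vanishes near $r=1$, with no constraint at $r=0$ beyond integrability) is what makes the half-line version applicable.

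An alternative, cleaner route that I would actually prefer is to invoke the known sharp radial Sobolev inequality in fractional dimension directly, as cited from \cite{VHN}: for $\beta > 2$ the weighted functional $\int_0^\infty |u'(r)|^2 r^{\beta-1}\,dr$ controls $\big(\int_0^\infty |u(r)|^{2\beta/(\beta-2)} r^{\beta-1}\,dr\big)^{(\beta-2)/\beta}$ for compactly supported $u$, with an explicit optimal constant given by a Gamma-function formula of Bliss/Talenti type. Since $C_0^\infty([0,1))$ embeds into $C_0^\infty([0,\infty))$ by extension by zero, the inequality on $[0,1)$ follows immediately from the inequality on $[0,\infty)$ with the same (hence finite, positive) constant $\Sb$. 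Thus the bulk of the proof is simply citing the fractional-dimensional Bliss inequality and observing that restriction to the interval only shrinks both sides consistently; one should note, though, that the constant obtained this way need not be sharp for the interval, but Lemma~\ref{SobolevFractional} only asserts existence of some positive $\Sb$, so sharpness is not needed here.

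The only genuinely delicate point is the behavior at the origin: one must be sure that for $u\in C_0^\infty([0,1))$ the quantity $\int_0^1 |u'(r)|^2 r^{\beta-1}\,dr$ is finite (clear, since $u'$ is bounded and $\beta>2>0$) and, more importantly, that in the change of variables or in passing to the half-line no boundary term at $r=0$ is lost. Since $u$ is smooth up to $0$ and the weight $r^{\beta-1}$ vanishes there (as $\beta>2$), any integration-by-parts boundary contribution at $r=0$ vanishes, so no spurious constant or term appears; the same remark applies at $r=1$ because $u$ has compact support in $[0,1)$. I expect this verification — that the fractional-dimensional Sobolev inequality genuinely applies to the class $C_0^\infty([0,1))$ without a correction term — to be the main (and only) obstacle, and it is handled by the density/extension argument together with the vanishing of the weight at $0$. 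Once that is in place, the lemma is an immediate consequence of \cite{VHN}, and one may simply take $\Sb$ to be the optimal constant from that reference.
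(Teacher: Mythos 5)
Your proposal is correct and coincides with what the paper does: the paper gives no proof of Lemma~\ref{SobolevFractional} at all, simply citing \cite{VHN} for the sharp fractional-dimensional Sobolev inequality, and your preferred route (invoke that result on $[0,\infty)$ and restrict to $C_0^\infty([0,1))$ by extension by zero) is exactly the intended justification. The additional remarks on the change of variables and the harmless behavior at $r=0$ are fine but not needed beyond the citation.
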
 

Making use of Lemma \ref{SobolevFractional}, we can prove a generalization of the supercritical Sobolev inequality \eqref{SobolevO} of do \'O, Ruf, and Ubilla to the fractional dimension $\beta$. 

\begin{lemma}\label{fract}
Let $\beta >2$ and $q(r) = \frac{2\beta}{\beta -2} + f(r)$ with $f:[0,1) \to [0,\infty)$ so that $f$ is continuous and satisfies the following conditions:
\begin{enumerate}
\item[($f_1$)] $f(0) =0$ and $f(r)>0$ for $r >0$;
\item[($f_2$)] there exists some $c > 0$ such that
\[
f(r) \leq \frac c{-\ln r}
\] 
for $r$ near $0$.
\end{enumerate}
Then, for all $a >0$,
\[
S_\beta(f,a):=\sup_{u \in \mathscr A_a}  \int_0^1 |u(r)|^{q(r)} r^{\beta -1} dr 
\]
is finite where
\[
\mathscr A_a = \Big\{ u\in C_0^\infty([0,1)),  \int_0^1 |u'(r)|^2 r^{\beta -1} dr \leq a \Big\}.
\]
\end{lemma}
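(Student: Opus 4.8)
\textbf{Proof proposal for Lemma~\ref{fract}.}
The plan is to reduce the estimate on $\int_0^1 |u(r)|^{q(r)} r^{\beta-1}\,dr$ to the sharp fractional-dimensional Sobolev inequality \eqref{eq:fracdimSobolev} by splitting the integral into a region near the origin, where the excess exponent $f(r)$ is controlled by $c/(-\ln r)$, and a region bounded away from the origin, where $r^{\beta-1}$ and the extra power of $|u|$ are both harmless. Fix $u\in\mathscr A_a$, and after normalizing (replacing $u$ by $u/\sqrt{a}\cdot\sqrt{a}$, i.e.\ rescaling the target constant) we may as well assume $\int_0^1 |u'(r)|^2 r^{\beta-1}\,dr\le a$. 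Write $q(r)=\frac{2\beta}{\beta-2}+f(r)$ and split
\[
\int_0^1 |u(r)|^{q(r)} r^{\beta-1}\,dr
=\int_0^\rho |u(r)|^{q(r)} r^{\beta-1}\,dr+\int_\rho^1 |u(r)|^{q(r)} r^{\beta-1}\,dr,
\]
where $\rho\in(0,1)$ is chosen small enough that $(f_2)$ holds on $[0,\rho]$.

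For the inner integral, the key pointwise bound I would establish is a radial decay estimate of the form $|u(r)|\le C(a,\beta)\,r^{-(\beta-2)/2}$ for $r\le\rho$. This follows from \eqref{eq:fracdimSobolev} together with a one-dimensional argument: for $0<r<s\le\rho$ one has $|u(r)|^{(\beta-2)/2}$-type control via Cauchy--Schwarz against the weight $t^{\beta-1}$, giving $|u(r)|\lesssim \|u'\|_{L^2(t^{\beta-1})}\, r^{-(\beta-2)/2}\le \sqrt a\, C_\beta\, r^{-(\beta-2)/2}$. Now on $[0,\rho]$ we write $|u(r)|^{q(r)}=|u(r)|^{2\beta/(\beta-2)}\cdot|u(r)|^{f(r)}$ and estimate the extra factor: using the decay bound, $|u(r)|^{f(r)}\le \big(C_\beta\sqrt a\big)^{f(r)} r^{-\frac{\beta-2}{2}f(r)}$. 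Since $f(r)\le c/(-\ln r)$ and $f$ is bounded on $[0,\rho]$ (it is continuous with $f(0)=0$), the first factor is bounded by a constant depending only on $a,\beta,c$, while $r^{-\frac{\beta-2}{2}f(r)}=e^{\frac{\beta-2}{2}f(r)(-\ln r)}\le e^{\frac{(\beta-2)c}{2}}$ is uniformly bounded. Hence $|u(r)|^{f(r)}\le C(a,\beta,c)$ on $[0,\rho]$, and therefore
\[
\int_0^\rho |u(r)|^{q(r)} r^{\beta-1}\,dr
\le C(a,\beta,c)\int_0^\rho |u(r)|^{2\beta/(\beta-2)} r^{\beta-1}\,dr
\le C(a,\beta,c)\,\big(\Sb\, a\big)^{\beta/(\beta-2)},
\]
the last step by \eqref{eq:fracdimSobolev}.

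For the outer integral over $[\rho,1]$, the weight satisfies $r^{\beta-1}\le 1$, and by the decay bound (or simply by \eqref{eq:fracdimSobolev} applied on all of $[0,1)$) we have a uniform sup bound $\|u\|_{L^\infty([\rho,1])}\le M(a,\beta,\rho)$; since $q(r)$ is continuous on the compact interval $[\rho,1]$ it is bounded, say $q(r)\le Q$, so $|u(r)|^{q(r)}\le \max(1,M)^{Q}$ and the contribution is at most a finite constant $C(a,\beta,\rho)$. Adding the two pieces gives the claimed finiteness of $S_\beta(f,a)$, with an explicit bound depending only on $a$, $\beta$, $c$, and the choice of $\rho$.

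The main obstacle is the control of $|u(r)|^{f(r)}$ near the origin: this is precisely where assumptions $(f_1)$ and $(f_2)$ are used, and the logarithmic rate in $(f_2)$ is exactly the borderline that matches the power-type blow-up $r^{-(\beta-2)/2}$ of radial $H^1$-functions in dimension $\beta$, so that $r^{-\frac{\beta-2}{2}f(r)}$ stays bounded. A secondary technical point is establishing the radial decay estimate with the correct power $-(\beta-2)/2$ and a constant controlled purely by $\|u'\|_{L^2(r^{\beta-1})}$; this is standard for integer dimensions and the fractional-dimensional version is handled exactly as in the proof of Lemma~\ref{SobolevFractional} (cf.\ \cite{VHN}), so I would either cite it or reprove the one-line Cauchy--Schwarz argument.
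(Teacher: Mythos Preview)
Your overall strategy---the radial decay estimate via Cauchy--Schwarz, the split of the integral, and the use of $(f_2)$ to bound the excess factor $|u(r)|^{f(r)}$ on the inner piece before invoking \eqref{eq:fracdimSobolev}---is exactly the paper's approach, and your treatment of the inner integral on $[0,\rho]$ is correct.

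There is, however, a genuine gap in your handling of the outer integral. You assert that ``$q(r)$ is continuous on the compact interval $[\rho,1]$ it is bounded, say $q(r)\le Q$''. But $f$ is only defined on $[0,1)$, and the hypotheses $(f_1)$--$(f_2)$ impose no constraint whatsoever on $f$ near $r=1$; the paper stresses this point explicitly after the proof. Thus $q$ may blow up as $r\to 1^-$ (take for instance $f(r)=r/(1-r)$), and your bound $|u(r)|^{q(r)}\le \max(1,M)^Q$ collapses whenever $M>1$. Since $u$ has compact support one has $u\equiv 0$ near $1$ for each individual $u$, but the vanishing neighborhood depends on $u$, so this does not yield a uniform bound over $\mathscr A_a$.

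The paper circumvents this by using the sharper pointwise estimate
\[
|u(r)|\le \Big(\frac{a(r^{2-\beta}-1)}{\beta-2}\Big)^{1/2},
\]
which tends to $0$ as $r\to 1^-$, and choosing the split point $r_0=(a/(a+\beta-2))^{1/(\beta-2)}$ so that $|u(r)|\le 1$ on $(r_0,1)$. Then $|u(r)|^{q(r)}\le 1$ on the outer region regardless of how large $q(r)$ becomes, and the outer integral is bounded by $\int_{r_0}^1 dr\le 1$. To repair your argument you should either adopt this split point, or further subdivide $[\rho,1)$ into $[\rho,r_0]$ (compact in $[0,1)$, where $q$ is genuinely bounded) and $[r_0,1)$ (where $|u|\le 1$).
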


\begin{proof}
Let $u \in \mathscr A_a$ be arbitrary, namely, $u \in C_0^\infty([0,1))$ and $\int_0^1 |u'(r)|^2 r^{\beta -1} dr \leq a$. Our aim is to estimate $\int_0^1 |u(r)|^{q(r)} r^{\beta -1} dr$. By H\"older's inequality, we estimate
\begin{align*}
|u(r)| &= \Big|-\int_r^1 u'(s) ds\Big| \\
&\leq \Big( \int_r^1 |u'(s)|^2 s^{\beta -1} ds \Big) ^{1/2} \Big( \int_r^1 s^{1-\beta} ds \Big) ^{1/2}\\
&\leq \Big( a \frac{r^{2-\beta} -1}{\beta -2} \Big) ^{1/2}.
\end{align*} 
In particular, because $r \in (0,1)$ and $\beta>2$, we then get
\begin{equation}\label{eq:est1}
|u(r)| \leq a^{1/2} \frac{r^{(2-\beta)/2}}{(\beta -2)^{1/2}} 
\end{equation}
for any $r \in (0,1)$. Now taking $r_0$ in such a way that $ a (r_0^{2-\beta} -1)/(\beta -2) =1$, namely, $r_0 = (a/(a+\beta -2)) ^{1/(\beta -2)}$, we then have
\begin{equation}\label{eq:est2}
|u(r)| \leq 1
\end{equation}
for any $r \in (r_0,1)$. We now have by \eqref{eq:est2} the following
\begin{equation}\label{eq:bound1}
\int_{r_0}^1 |u(r)|^{q(r)}r^{\beta-1} dr \leq \int_{r_0}^1 dr \leq 1.
\end{equation}
We next estimate the integral on $(0,r_0)$. By \eqref{eq:est1} we know that
\begin{align*}
\int_{0}^{r_0} |u(r)|^{q(r)}r^{\beta-1} dr &= \int_0^{r_0} |u(r)|^{\frac{2\beta}{\beta-2}}|u(r)|^{f(r)}r^{\beta -1} dr\\
&\leq \int_0^{r_0} |u(r)|^{\frac{2\beta}{\beta-2}} \Big( \frac{a r^{2-\beta}}{\beta -2} \Big) ^{f(r)/2 } r^{\beta-1} dr.
\end{align*}
Let 
\[ 
g(r) = \Big( \frac{a r^{2-\beta}}{\beta -2} \Big) ^{f(r)/2 },\quad r\in (0,r_0].
\]
Note that $g(r)$ is continuous on $(0,r_0]$ and
\[
\ln g(r) = \frac{f(r)}2 \Big( \ln \frac{a}{\beta -2} -(\beta-2)\ln r \Big) ,
\]
which yields
\[
\limsup_{r\to 0} \ln g(r) \leq c \frac{\beta -2}2
\]
by the assumptions ($f_1$)--($f_2$) on $f$. Consequently, the function $g$ is bounded on $(0,r_0]$. Putting 
\[
C_0 = \sup_{t\in (0,r_0]} g(t),
\] 
then we have
\begin{equation}\label{eq:bound2}
\int_0^{r_0} |u(r)|^{q(r)}r^{\beta-1} dr \leq C_0 \int_0^{r_0} |u(r)|^{\frac{2\beta}{\beta-2}} r^{\beta-1} dr \leq C_0 (\Sb a)^{\frac{\beta}{\beta -2}},
\end{equation}
where $\Sb$ is the sharp constant in the Sobolev inequality \eqref{eq:fracdimSobolev}. Combining \eqref{eq:bound1} and \eqref{eq:bound2} proves the lemma.
\end{proof}

Compared to \cite[Section 2]{doO}, our condition ($f_1$) is exactly the same as that of \cite{doO}, however, our condition ($f_2$) for $r$ near $0$ is weaker than that of \cite{doO}, and the most important is that we do not assume any behavior of $f$ near $1$ as indicated in ($f_3$) of \cite{doO}.


\subsection{Estimates for bubbles}

We now denote by $u_1^*$ the following bubble-shaped function
\[
u_1^*(x) = \Big( \frac 2{1 + |x|^2} \Big)^{\frac{n-2m}2}.
\]
Clearly, $u_1^*$ solves the following equation
\begin{equation*}
(-\De)^m u = u ^{\2ms -1}
\end{equation*}
in $\R^n$; see \cite{WeiXu99}. Then, for each $ \varepsilon >0$, we scale $u_1^*$ to obtain $u_\varepsilon^*$ in the following way
\[
u_\varepsilon^*(x) = \varepsilon^{-\frac{n-2m}2} u_1^*(\varepsilon^{-1} x) ,
\]
namely,
\[
u_\varepsilon^*(x) = \Big( \frac{2\varepsilon}{\varepsilon^2 + |x|^2 } \Big)^{\frac{n-2m}2}.
\]
Clearly, $u_\varepsilon^*$ also solves the above equation, namely,
\begin{equation*}
(-\De)^m u_\varepsilon^* = (u_\varepsilon^*)^{\2ms -1}
\end{equation*}
in $\R^n$. It is well-known that all functions $u_\varepsilon^*$ are the optimizers of the Sobolev inequality \eqref{SobolevInequality}, namely,
\[
 \Snm^2\int_{\R^n} |\nabla^m u_\varepsilon^*|^2 dx = \Big( \int_{\R^n} (u_\varepsilon^*)^{\2ms} dx \Big)^{2/\2ms}.
\]
In this sense, there holds
\[
\int_{\R^n} |\nabla^m u_\varepsilon^*|^2 dx = \int_{\R^n} (u_\varepsilon^*)^{\2ms} dx = \Snm^{-n/m}.
\]
Let $\eta$ be a cut-off function on $B$, which is of class $C_0^\infty(B)$ and radially symmetric. In addition, we require $0\leq \eta \leq 1$ everywhere and $\eta(x) =1$ if $|x| \leq 1/2$. When $\varepsilon$ is near zero, the following expansions for $\eta u_\varepsilon^*$ are well-known
\begin{equation}\label{eq:tiemcan1}
\int_B | \nabla^m(\eta u_\varepsilon^*) |^2 dx = \Snm^{-n/m} + O(\varepsilon^{n-2m})_{\varepsilon \searrow 0}
\end{equation}
and
\begin{equation}\label{eq:tiemcan2}
\int_B (\eta u_\varepsilon^*)^{\2ms} dx = \Snm^{-n/m} + O(\varepsilon^{n})_{\varepsilon \searrow 0};
\end{equation}
see \cite[proof of Theorem $7.23$]{GGS}.

Our first set of results in this subsection concerns the expansion of $\int_B |\eta u_\varepsilon^*|^{\2ms + |x|^\alpha} dx$ in terms of $\varepsilon$. 

\begin{lemma}\label{Tiemcan}
Let $\alpha , C> 0$ and denote
\[
v_\varepsilon (x)= C \eta (x) u_\varepsilon^* (x).
\] 
There exists a constant $\mathscr C_1>0$ such that for $\varepsilon >0$ small enough
\begin{equation}\label{eq:tiemcan3}
\int_B |v_\varepsilon|^{\2ms + |x|^\alpha} dx 
= 
\left\{
\begin{aligned}
&C^{\2ms}\Snm^{-n/m} + C^{\2ms} \mathscr C_1 |\ln \varepsilon| \varepsilon^\alpha + o(\varepsilon^\alpha|\ln \vep|)_{\varepsilon \searrow 0} &\mbox{ if }\alpha < n,\\
&C^{\2ms}\Snm^{-n/m} + O(\varepsilon^{n(1-\gamma)})_{\varepsilon \searrow 0}&\mbox{ if } \alpha \geq n,
\end{aligned}
\right.
\end{equation}
for arbitrary $0< \gamma < 1/2$ but fixed.
\end{lemma}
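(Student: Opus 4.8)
The plan is to compute $\int_B |v_\varepsilon|^{\2ms + |x|^\alpha} dx$ by comparing it to the base integral $\int_B |v_\varepsilon|^{\2ms} dx$, whose expansion is already recorded in \eqref{eq:tiemcan2}. Writing $|v_\varepsilon(x)|^{\2ms + |x|^\alpha} = |v_\varepsilon(x)|^{\2ms} \cdot |v_\varepsilon(x)|^{|x|^\alpha}$, the key quantity is the multiplicative correction $|v_\varepsilon(x)|^{|x|^\alpha} = \exp\big(|x|^\alpha \ln |v_\varepsilon(x)|\big)$. Since $v_\varepsilon = C\eta u_\varepsilon^*$ with $u_\varepsilon^*(x) = (2\varepsilon/(\varepsilon^2+|x|^2))^{(n-2m)/2}$, one has $\ln u_\varepsilon^*(x) = \tfrac{n-2m}2\big(\ln(2\varepsilon) - \ln(\varepsilon^2+|x|^2)\big)$, so on the bulk region where $\eta \equiv 1$ the exponent correction is explicit. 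First I would split $B$ into the region $\{|x| \le \varepsilon^\theta\}$ for a suitable $\theta \in (0,1)$ — where $v_\varepsilon$ is large and the factor $|v_\varepsilon|^{|x|^\alpha}$ genuinely contributes — and the complementary region $\{|x| > \varepsilon^\theta\}$, where either $v_\varepsilon$ is bounded or its contribution is of lower order.

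Next I would carry out the inner estimate. After the change of variables $x = \varepsilon y$, the integral over $\{|x| \le \varepsilon^\theta\}$ becomes $\varepsilon^n \int_{\{|y|\le \varepsilon^{\theta-1}\}} |C u_1^*(y)|^{\2ms}\cdot |\varepsilon y|^{\alpha\,\text{-correction}}\, dy$ (modulo the cutoff, which is $1$ here); the factor $(\varepsilon y)^\alpha \ln(\cdots)$ in the exponent produces, upon expanding $\exp(t) = 1 + t + O(t^2)$, a leading correction term proportional to $\varepsilon^\alpha \int |C u_1^*(y)|^{\2ms}|y|^\alpha \ln|v_\varepsilon(\varepsilon y)|\, dy$. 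The logarithm $\ln|v_\varepsilon(\varepsilon y)| = \ln C + \tfrac{n-2m}2\ln(2/\varepsilon) - \tfrac{n-2m}2\ln(1+|y|^2)$ contributes a dominant piece $-\tfrac{n-2m}2\ln\varepsilon = \tfrac{n-2m}2|\ln\varepsilon|$ times $\varepsilon^\alpha \int_{\R^n} (u_1^*)^{\2ms}|y|^\alpha\, dy$; this integral converges precisely when $\alpha < n$ (since $(u_1^*)^{\2ms} \sim |y|^{-2n}$ at infinity, so $(u_1^*)^{\2ms}|y|^\alpha \sim |y|^{\alpha - 2n}$ is integrable iff $\alpha < n$), giving the announced constant $\mathscr C_1 = C^{\2ms}\tfrac{n-2m}2\int_{\R^n}(u_1^*)^{\2ms}|y|^\alpha dy$ up to normalization, and forcing the case distinction in the statement. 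When $\alpha \ge n$ this integral diverges, so one instead truncates at $|y| \le \varepsilon^{\theta-1}$, picking up a power $\varepsilon^{\alpha + (\theta-1)(\alpha-n)} = \varepsilon^{n + \theta(\alpha-n)}$ which, choosing $\theta$ close to $1$, is $O(\varepsilon^{n(1-\gamma)})$ for any fixed $\gamma \in (0,1/2)$; the extra $|\ln\varepsilon|$ is absorbed into $\varepsilon^{-n\gamma}$. Throughout, the zeroth-order term $\varepsilon^n\int |Cu_1^*|^{\2ms}dy$ reconstitutes $C^{\2ms}\Snm^{-n/m}$ up to the $O(\varepsilon^n)$ tail already present in \eqref{eq:tiemcan2}.

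Then I would handle the outer region $\{|x| > \varepsilon^\theta\}$. There $u_\varepsilon^*(x) \le (2\varepsilon/\varepsilon^{2\theta})^{(n-2m)/2} = (2\varepsilon^{1-2\theta})^{(n-2m)/2} \to 0$ as $\varepsilon \searrow 0$ provided $\theta < 1/2$; hence for $\varepsilon$ small, $|v_\varepsilon| \le 1$ on this region (after possibly shrinking to $|x|$ somewhat larger than $\varepsilon^\theta$, or noting $\ln|v_\varepsilon| < 0$ there), so $|v_\varepsilon|^{|x|^\alpha} \le 1$ and the whole contribution is at most $\int_{\{|x|>\varepsilon^\theta\}} |v_\varepsilon|^{\2ms}dx$, which by the decay of $u_\varepsilon^*$ is $O(\varepsilon^{(1-2\theta)n})$ or smaller — of lower order than both the claimed $\varepsilon^\alpha|\ln\varepsilon|$ (for $\theta$ small) and the claimed $\varepsilon^{n(1-\gamma)}$ error terms, once $\theta$ is chosen appropriately in each case (small for $\alpha<n$, close to $1/2$ for $\alpha \ge n$; one should check the ranges of $\theta$ demanded by the inner and outer estimates overlap, which they do). A minor point requiring care: near $|x| \approx \varepsilon^\theta$ the sign of $\ln|v_\varepsilon|$ may be positive, so one keeps $|v_\varepsilon|^{|x|^\alpha} = \exp(|x|^\alpha \ln|v_\varepsilon|) \le \exp(C|x|^\alpha|\ln\varepsilon|)$ and uses $|x|^\alpha|\ln\varepsilon| \le \varepsilon^{\theta\alpha}|\ln\varepsilon| \to 0$ to bound this factor by $2$, say.

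The main obstacle is controlling the error term in the Taylor expansion $\exp(t) = 1 + t + O(t^2)$ uniformly in the inner region: one must verify that $t = |x|^\alpha\ln|v_\varepsilon(x)|$ stays bounded (indeed small) there so that the $O(t^2)$ remainder, after integration, is genuinely of lower order than the leading $\varepsilon^\alpha|\ln\varepsilon|$ term — this requires the quantitative bound $|x|^\alpha|\ln\varepsilon| \lesssim \varepsilon^{\theta\alpha}|\ln\varepsilon| \to 0$ on $\{|x| \le \varepsilon^\theta\}$, together with a matching lower bound showing $|x|^\alpha|\ln(1+|x|^2/\varepsilon^2)|$ does not spoil things, and then a careful accounting that the second-order integral $\varepsilon^\alpha \cdot o(1) \cdot |\ln\varepsilon|$-type contributions are absorbed into the stated $o(\varepsilon^\alpha|\ln\varepsilon|)$. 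Balancing the constraints on $\theta$ coming simultaneously from the inner truncation error, the outer smallness, and this remainder control is the delicate bookkeeping of the proof.
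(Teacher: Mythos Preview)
Your approach is essentially the same as the paper's: split at radius $\varepsilon^\theta$ (the paper uses $\varepsilon^\gamma$), bound the outer part by the base integral using $|v_\varepsilon|\le 1$ there, and Taylor-expand $|v_\varepsilon|^{|x|^\alpha}=\exp(|x|^\alpha\ln|v_\varepsilon|)$ on the inner part to extract the $\varepsilon^\alpha|\ln\varepsilon|$ correction. The paper organizes the inner computation slightly differently---it keeps $x$-coordinates, further subdivides at the exact radius $a_\varepsilon\sim\sqrt\varepsilon$ where $v_\varepsilon=1$ to control the sign of $\ln v_\varepsilon$, and packages all the resulting integrals into a single auxiliary estimate for $\int_{B_{\varepsilon^\gamma}}|v_\varepsilon|^{\2ms}|x|^\beta\big(\ln(1+|x|^2/\varepsilon^2)\big)^\delta dx$---but the substance is identical.

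Two small slips to fix. First, the outer contribution $\int_{\{|x|>\varepsilon^\theta\}}|v_\varepsilon|^{\2ms}dx$ is $O(\varepsilon^{n(1-\theta)})$, not $O(\varepsilon^{(1-2\theta)n})$; with the correct exponent, for $\alpha<n$ you need $\theta<1-\alpha/n$ (so ``small $\theta$'' is right), while for $\alpha\ge n$ you simply need $n(1-\theta)\ge n(1-\gamma)$, i.e.\ $\theta\le\gamma<1/2$. Second, for $\alpha\ge n$ the inner correction $\varepsilon^{n+\theta(\alpha-n)}|\ln\varepsilon|$ is already $O(\varepsilon^{n(1-\gamma)})$ for \emph{any} $\theta>0$, so there is no need to push $\theta$ ``close to $1$''---which would in any case conflict with the outer requirement $\theta<1/2$. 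Once these exponents are corrected, the ranges of $\theta$ for inner and outer estimates are compatible in both regimes and the argument goes through.
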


\begin{proof}
It follows from the definition of $u_\varepsilon^*$ that $C u_\varepsilon^*(x) \leq 1$ if and only if
\[
|x| \geq \sqrt{ A_{n,m}^{ 2/(n-2m)} \varepsilon -\varepsilon^2} =:a_\varepsilon,
\]
where
\[
A_{n,m} = 2^\frac{n-2m}{2} C.
\]
Notice that $a_\varepsilon \sim \sqrt{\vep}$ as $\vep\to 0$. For any $0 < \gamma <  1/2$ but fixed, we then have $a_\varepsilon < \varepsilon^\gamma< 1/2 $ provided $\varepsilon >0$ is small enough. Hence, $v_\varepsilon \leq 1$ on $B\setminus B_{\varepsilon^\gamma}$ which then implies
\begin{align*}
0 \leq\int_{B\setminus B_{\varepsilon^\ga}} |v_\varepsilon|^{\2ms + |x|^\alpha} dx &\leq \int_{B\setminus B_{\varepsilon^\ga}} |v_\varepsilon|^{\2ms} dx\\
&\leq C^{2_m^*} \int_{B\setminus B_{\varepsilon^\ga}} |u_\varepsilon^*|^{\2ms} dx\\
&\leq 2^n C^{2_m^*} \int_{\R^n \setminus B_{\varepsilon^\ga}} (1+ |x|^2)^{-n} dx\\
&= O (\varepsilon^{n(1-\gamma)})_{\varepsilon \searrow 0}.
\end{align*}
Thus, we have proved that
\begin{equation}\label{benngoai}
\int_{B\setminus B_{\varepsilon^\ga}} |v_\varepsilon|^{\2ms + |x|^\alpha} dx = O (\varepsilon^{n(1-\gamma)})_{\varepsilon \searrow 0}.
\end{equation}
On $B_{\varepsilon^\gamma} \setminus B_{a_\varepsilon}$, we also have $v_\varepsilon \leq 1$ and therefore
\[
v_\varepsilon(x) = A_{n,m} \frac{\varepsilon^{\frac{n-2m}2}}{(\varepsilon^2 + |x|^2)^{\frac{n-2m}2}} \geq A_{n,m} \varepsilon^{(1-2\gamma)\frac{n-2m}2} (1 + \varepsilon^{2(1-\gamma)})^{-\frac{n-2m}2},
\]
which implies
\begin{align*}
0\geq \ln  \big( v_\varepsilon(x)^{|x|^{\alpha}} \big) &\geq |x|^{\al}\ln  \lt(A_{n,m} \varepsilon^{(1-2\gamma)\frac{n-2m}2} (1 + \varepsilon^{2(1-\gamma)})^{-\frac{n-2m}2}\rt)\\
& \geq \varepsilon^{\gamma \alpha} \ln\lt(A_{n,m} \varepsilon^{(1-2\gamma)\frac{n-2m}2} (1 + \varepsilon^{2(1-\gamma)})^{-\frac{n-2m}2}\rt)\\
&= (1-2\gamma)\frac{n-2m}2 \varepsilon^{\gamma \alpha} \ln \varepsilon + O(\varepsilon^{\gamma \al})_{\varepsilon \searrow 0}\\
& = o(1)_{\varepsilon \searrow 0}.
\end{align*}
Hence, together with Taylor's expansion, we are in position to estimate $v_\varepsilon (x)^{|x|^\alpha}$ as follows
\begin{align*}
v_\varepsilon(x)^{|x|^\alpha} &= 1 + \Big( \ln A_{n,m} -\frac{n-2m}2 \ln \big(\varepsilon + \frac{|x|^2}{\varepsilon} \big) \Big) |x|^\al\\
&\quad + O\Big(\Big( \ln A_{n,m} -\frac{n-2m}2 \ln \big(\varepsilon + \frac{|x|^2}{\varepsilon} \big) \Big) ^2 |x|^{2\alpha}\Big)_{\varepsilon \searrow 0}
\end{align*}
on $B_{\varepsilon^{\gamma}} \setminus B_{a_\varepsilon}$. For $\varepsilon >0$ small enough, there holds
\begin{align}\label{eq:adgoutball}
\int_{B_{\varepsilon^{\gamma}} \setminus B_{a_\varepsilon}} & |v_\varepsilon|^{\2ms + |x|^\alpha} dx \notag \\
&= \int_{B_{\varepsilon^{\gamma}} \setminus B_{a_\varepsilon}} |v_\varepsilon|^{\2ms} dx + \Big( \ln A_{n,m}-\frac{n-2m}2 \ln \varepsilon \Big) \int_{B_{\varepsilon^{\gamma}} \setminus B_{a_\varepsilon}} |v_\varepsilon|^{\2ms} |x|^\alpha dx \notag\\
& -\frac{n-2m}2 \int_{B_{\varepsilon^{\gamma}} \setminus B_{a_\varepsilon}} |v_\varepsilon|^{\2ms} \ln \Big( 1 + \frac{|x|^2}{\varepsilon^2} \Big) |x|^\alpha dx \notag\\
&+ O\Big(\int_{B_{\varepsilon^{\gamma}} \setminus B_{a_\varepsilon}} |v_\varepsilon|^{\2ms} \Big( \ln A_{n,m} -\frac{n-2m}2 \ln \big(\varepsilon + \frac{|x|^2}{\varepsilon} \big) \Big) ^2 |x|^{2\alpha}  dx\Big)_{\varepsilon \searrow 0}.
\end{align}
On the other hand, there holds
\begin{align*}
v_\varepsilon(x)^{|x|^\alpha} &= A_{n,m}^{|x|^\alpha} \Big( \frac{\varepsilon}{\varepsilon^2 + |x|^2} \Big) ^{\frac{n-2m}{2}|x|^\alpha} \\
&= \exp \Big[\Big( \ln A_{n,m} -\frac{n-2m}2 \ln \big(\varepsilon + \frac{|x|^2}{\varepsilon} \big) \Big) |x|^\alpha\Big].
\end{align*}
Since $\eta =1$ on $B_{a_\varepsilon}$ for $\varepsilon >0$ small enough, we deduce that $v_\varepsilon = C u^*_\varepsilon \geq 1$ on $B_{a_\varepsilon}$. From this one can estimate
\begin{align*} 
0 &\leq \Big( \ln A_{n,m} -\frac{n-2m}2 \ln (\varepsilon + \frac{|x|^2}{\varepsilon}) \Big) |x|^\alpha \\
&\leq \Big( \ln A_{n,m} -\frac{n-2m}2 \ln \varepsilon \Big) a_\varepsilon^\alpha \\
&= O(\varepsilon^{\alpha /2}(-\ln \varepsilon))\\
& = o(1)_{\varepsilon \searrow 0}.
\end{align*}
Hence, together with Taylor's expansion, we are in position to estimate $v_\varepsilon (x)^{|x|^\alpha}$ as follows
\begin{align*}
v_\varepsilon(x)^{|x|^\alpha} &= 1 + \Big( \ln A_{n,m} -\frac{n-2m}2 \ln (\varepsilon + \frac{|x|^2}{\varepsilon}) \Big) |x|^\al\\
&\quad +O\Big( \Big( \ln A_{n,m} -\frac{n-2m}2 \ln (\varepsilon + \frac{|x|^2}{\varepsilon}) \Big) ^2 |x|^{2\alpha}\Big)_{\varepsilon \searrow 0},
\end{align*}
on $B_{a_\varepsilon}$. For $\varepsilon >0$ small enough, it holds
\begin{align}\label{eq:adg}
\int_{B_ {a_\varepsilon}}    |v_\varepsilon|^{\2ms + |x|^\alpha} dx 
=& \int_{B_ {a_\varepsilon}} |v_\varepsilon|^{\2ms} dx + \Big( \ln A_{n,m}-\frac{n-2m}2 \ln \varepsilon \Big) \int_{B_{a_\varepsilon}} |v_\varepsilon|^{\2ms} |x|^\alpha dx \notag\\
& -\frac{n-2m}2 \int_{B_ {a_\varepsilon}} |v_\varepsilon|^{\2ms} \ln \Big( 1 + \frac{|x|^2}{\varepsilon^2} \Big) |x|^\alpha dx \notag\\
&+ O\Big( \int_{B_{a_\varepsilon}} |v_\varepsilon|^{\2ms} \Big( \ln A_{n,m} -\frac{n-2m}2 \ln (\varepsilon + \frac{|x|^2}{\varepsilon}) \Big) ^2 |x|^{2\alpha}  dx\Big)_{\varepsilon \searrow 0}.
\end{align}
Combining \eqref{benngoai}, \eqref{eq:adgoutball} and \eqref{eq:adg}, we arrive
\begin{align}\label{eq:adgtong}
\int_{B} |v_\varepsilon|^{\2ms + |x|^\alpha} dx =& \int_{B_{\varepsilon^\ga}} |v_\varepsilon|^{\2ms} dx + \Big( \ln A_{n,m}-\frac{n-2m}2 \ln \varepsilon \Big) \int_{B_{\varepsilon^\ga}} |v_\varepsilon|^{\2ms} |x|^\alpha dx  \notag\\
& -\frac{n-2m}2 \int_{B_{\varepsilon^\ga}} |v_\varepsilon|^{\2ms} \ln \Big( 1 + \frac{|x|^2}{\varepsilon^2} \Big) |x|^\alpha dx+ O (\varepsilon^{n(1-\gamma)})_{\varepsilon \searrow 0} \notag\\
&+ O\Big(\int_{B_{\varepsilon^{\gamma}} } |v_\varepsilon|^{\2ms} \Big( \ln A_{n,m} -\frac{n-2m}2 \ln (\varepsilon + \frac{|x|^2}{\varepsilon}) \Big) ^2 |x|^{2\alpha}  dx\Big)_{\varepsilon \searrow 0}.
\end{align}
Now we estimate all integrals on the right hand side of \eqref{eq:adgtong}. It is easy to check that
\begin{align}\label{eq:dg2}
\int_{B_{\varepsilon^{\gamma}}} |v_\varepsilon|^{\2ms} dx = \int_{B_{\varepsilon^\ga}} (C u_\varepsilon^*)^{2_m^*} dx = C^{2_m^*} \Snm^{-\frac nm}+ O(\varepsilon^{n(1-\gamma)})_{\varepsilon \searrow 0}.
\end{align}
For $\beta \geq 0$ and $\de \geq 0$, we claim that
\begin{align}\label{eq:claimtiemcan}
\int_{B_{\varepsilon^\gamma}}& |v_\varepsilon|^{\2ms} |x|^{\beta} \lt(\ln \lt(1 + \frac{|x|^2}{\varepsilon^2}\rt)\rt)^{\de} dx\notag\\
&=
\left\{
\begin{aligned}
& \varepsilon^\beta A_{n,m}^{2_m^*} \int_{\R^n} \frac{|x|^\beta}{(1+ |x|^2)^n} \lt(\ln(1 + |x|^2)\rt)^\de dx + o(\varepsilon^\beta)&\mbox{ if }\beta < n,\\
&  \frac{(2(1-\gamma))^{\de+1}}{2(1 + \de)}A_{n,m}^{2_m^*}\om_{n-1} \varepsilon^n (-\ln \vep)^{1+ \de} + o(\varepsilon^n(-\ln \vep)^{1+\de}) &\mbox{ if } \beta =n,\\
& \frac{(2(1-\gamma))^\de}{\beta -n} A_{n,m}^{2_m^*}\om_{n-1}\varepsilon^{\gamma \beta + n(1-\gamma)}(-\ln \vep)^\de + o(\varepsilon^{\gamma \beta + n(1-\gamma)})&\mbox{ if } \beta > n.
\end{aligned}
\right.
\end{align}
Indeed, recall that
\[
v_\varepsilon (x) = A_{n,m} \eta(x) \varepsilon^{-\frac{n-2m}2} \Big(1 + \frac{|x|^2}{\varepsilon^2}\Big)^{-\frac{n-2m}2}.
\]
Hence, for $\varepsilon >0$ small enough, we have
\[
v_\varepsilon (x) = A_{n,m} \varepsilon^{-\frac{n-2m}2} \Big(1 + \frac{|x|^2}{\varepsilon^2} \Big)^{-\frac{n-2m}2}
\]
on $B_{\varepsilon^\ga}$. Making use of a suitable change of variables, we have
\[
\int_{B_{\varepsilon^\gamma}} |v_\varepsilon|^{\2ms} |x|^{\beta} \Big(\Big(1 + \frac{|x|^2}{\varepsilon^2}\Big)\Big)^{\de} dx= A_{n,m}^{\2ms} \varepsilon^\beta \int_{B_{\varepsilon^{\ga -1}}} \frac{|x|^\beta}{(1+ |x|^2)^n} \big(\ln (1 + |x|^2)\big)^\de dx.
\]
If $\beta < n$, then the function $ |x|^\beta (1+ |x|^2)^{-n} \big(\ln (1 + |x|^2)\big)^\de$ is integrable over $\R^n$, which then implies the first case in \eqref{eq:claimtiemcan}. If $\beta =n$, we have 
\[
\int_{B_{\varepsilon^{\ga -1}}} \frac{|x|^n}{(1+ |x|^2)^n} \lt(\ln (1 + |x|^2)\rt)^\de dx =\om_{n-1}\int_0^{\varepsilon^{\ga -1}} \frac{r^{2n-1}}{(1+ r^2)^n} (\ln (1+ r^2))^\de dr.
\]
By the l'H\^opital rule, we easily check that 
\[
\lim_{\varepsilon \searrow 0} \Big[ \frac{1}{(-\ln \vep)^{\de +1}} \int_0^{\varepsilon^{\ga -1}} \frac{r^{2n-1}}{(1+ r^2)^n} (\ln (1+ r^2))^\de dr \Big]= \frac{(2(1-\ga))^{\de +1}}{2(1+\de)},
\]
which proves the second case in \eqref{eq:claimtiemcan}. If $\beta > n$, we have
\[
\int_{B_{\varepsilon^{\ga -1}}} \frac{|x|^\beta}{(1+ |x|^2)^n} \lt(\ln (1 + |x|^2)\rt)^\de dx =\om_{n-1}\int_0^{\varepsilon^{\ga -1}} \frac{r^{\beta +n -1}}{(1+ r^2)^n} (\ln (1+ r^2))^\de dr.
\]
Again, by the l'H\^opital rule, we can also check that 
\[
\lim_{\varepsilon \searrow 0} \Big[ \frac{1}{\varepsilon^{(\be -n)(\ga -1)} (-\ln \vep)^\de} \int_0^{\varepsilon^{\ga -1}} \frac{r^{\beta+ n-1}}{(1+ r^2)^n} (\ln (1+ r^2))^\de dr \Big] = \frac{(2(1-\ga))^{\de}}{\beta -n},
\]
which proves the third case in \eqref{eq:claimtiemcan}. We now use \eqref{eq:claimtiemcan} to get
\begin{align}\label{eq:dg3}
&\Big(\ln A_{n,m} -\frac{n-2m}2 \ln \vep\Big)\int_{B_{a_\varepsilon}} |v_\varepsilon|^{\2ms} |x|^\alpha dx\notag\\
&= 
\left\{
\begin{aligned}
&\frac{n-2m}2 A_{n,m}^{\2ms}\int_{\R^n} \frac{|x|^\al}{(1+ |x|^2)^{n}} dx\, \varepsilon^{\alpha}|\ln \vep| + o(\varepsilon^{\al}|\ln \vep|)_{\varepsilon \searrow 0}&\mbox{ if } \al < n,\\
&\frac{n-2m}2(1-\gamma) A_{n,m}^{\2ms} \varepsilon^n (\ln \vep)^2 + o(\varepsilon^n (\ln \vep)^2)_{\varepsilon \searrow 0}&\mbox{ if } \al =n,\\
&\frac{n-2m}{2(\al -n)}A_{n,m}^{\2ms}\om_{n-1} \varepsilon^{\gamma \al + n(1-\gamma)} |\ln \vep| + o(\varepsilon^{\gamma \al + n(1-\gamma)}|\ln \vep|)_{\varepsilon \searrow 0}&\mbox{ if }\al > n.
\end{aligned}
\right.
\end{align}
Similarly, we have
\begin{align}\label{eq:dg4}
&\int_{B_ {a_\varepsilon}} |v_\varepsilon|^{\2ms} \ln \Big( 1 + \frac{|x|^2}{\varepsilon^2} \Big) |x|^\alpha dx\notag\\
& =
\left\{
\begin{aligned}
&\varepsilon^{\alpha}A_{n,m}^{\2ms} \int_{\R^n} \frac{|x|^\al}{(1+ |x|^2)^{n}} \ln (1 + |x|^2)  dx + o(\varepsilon^{\al})_{\varepsilon \searrow 0}&\mbox{ if } \al < n,\\
&(1-\gamma)^2 A_{n,m}^{\2ms} \om_{n-1}\varepsilon^n (\ln \vep)^2 + o(\varepsilon^n (\ln \vep)^2)_{\varepsilon \searrow 0} &\mbox{ if } \al = n,\\
&\frac{2(1-\ga)}{\al -n}A_{n,m}^{\2ms} \om_{n-1}\varepsilon^{\ga \al + n(1-\ga)} |\ln \vep| + o(\varepsilon^{\ga \al + n(1-\ga)}|\ln \vep|)_{\varepsilon \searrow 0} &\mbox{ if } \al > n.
\end{aligned}
\right.
\end{align}
By writing 
\[
\ln A_{n,m} -\frac{n-2m}2 \ln (\varepsilon + \frac{|x|^2}{\varepsilon}) =  \ln A_{n,m} -\frac{n-2m}2 \ln \varepsilon  -\frac{n-2m}2 \ln \big(1 + \frac{|x|^2}{\varepsilon^2} \big)
\]
and expanding $( \ln A_{n,m} -\frac{n-2m}2 \ln (\varepsilon + \frac{|x|^2}{\varepsilon}))^2$ and using again \eqref{eq:claimtiemcan} we have
\begin{align}\label{eq:dg5}
\int_{B_{\varepsilon^{\gamma}} } |v_\varepsilon|^{\2ms} \Big( \ln A_{n,m} &-\frac{n-2m}2 \ln (\varepsilon + \frac{|x|^2}{\varepsilon}) \Big) ^2 |x|^{2\alpha}  dx\notag\\
&=
\left\{
\begin{aligned}
& O(\varepsilon^{2\al} (\ln \vep)^2)_{\varepsilon \searrow 0} &\mbox{ if }\al <  n/2,\\
& O(\varepsilon^n |\ln \vep|^3)_{\varepsilon \searrow 0}&\mbox{ if }\al =  n/2,\\
& O(\varepsilon^{2\ga \al + n(1-\ga)}(\ln \vep)^2)_{\varepsilon \searrow 0}&\mbox{ if } \al > n/2.
\end{aligned}
\right.
\end{align}
Collecting all estimates \eqref{eq:adgtong} \eqref{eq:dg2}, \eqref{eq:dg3}, \eqref{eq:dg4} and \eqref{eq:dg5} gives
\begin{align*}
\int_{B} |v_\varepsilon|^{\2ms + |x|^\alpha} dx &\geq  C^{\2ms}\Snm^{-n/m} + \lt(\frac{n-2m}2 A_{n,m}^{\2ms}\int_{\R^n} \frac{|x|^\al}{(1+ |x|^2)^{n}} dx\rt) \varepsilon^{\alpha}(-\ln \vep)\\
& \quad+ o(-\varepsilon^{\al} \ln \vep)_{\varepsilon \searrow 0} + O(\varepsilon^{n(1-\gamma)})_{\varepsilon \searrow 0}
\end{align*}
if $\al < n$. Choosing $\gamma >0$ small enough so that $n(1-\ga) > \alpha$, we obtain \eqref{eq:tiemcan3} with
\[
\mathscr C_1 = \frac{n-2m}2 2^n\int_{\R^n} \frac{|x|^\al}{(1+ |x|^2)^{n}} dx
\] 
for $\al < n$. It also follows from \eqref{eq:adgtong} \eqref{eq:dg2}, \eqref{eq:dg3}, \eqref{eq:dg4} and \eqref{eq:dg5} that
\[
\int_{B} |v_\varepsilon|^{\2ms + |x|^\alpha} dx \geq C^{\2ms}\Snm^{-n/m} + \ga \frac{n-2m}2 A_{n,m}^{\2ms}\varepsilon^{n}(\ln \vep)^2 + O(\varepsilon^{n(1-\gamma)})_{\varepsilon \searrow 0}
\]
if $\al =n$, and finally
\[
\int_{B} |v_\varepsilon|^{\2ms + |x|^\alpha} dx \geq C^{\2ms}\Snm^{-n/m} + \frac{n-2m}{2(\al -n)}A_{n,m}^{\2ms} \varepsilon^{\gamma \al + n(1-\gamma)} (-\ln \varepsilon) + O(\varepsilon^{n(1-\gamma)})_{\varepsilon \searrow 0}
\]
if $\al > n$. This proves \eqref{eq:tiemcan3} for $\al \geq n$.
\end{proof}

Our next result in this subsection concerns the expansion of $\int_B \frac{1}{\2ms + |x|^{\alpha}} |u_\varepsilon(x)|^{\2ms + |x|^\alpha} dx$ in terms of $\varepsilon$.

\begin{lemma}\label{dgiatp}
Let $\alpha> 0$ and denote
\[
u_\varepsilon (x)= \eta (x) u_\varepsilon^* (x).
\] 
Then, there holds
\begin{equation}\label{eq:II}
\int_B \frac{|u_\varepsilon(x)|^{\2ms + |x|^\alpha}}{\2ms + |x|^{\alpha}}  dx 
= 
\left\{
\begin{aligned}
& \frac1{\2ms} \Snm^{-n/m} +\frac {\mathscr C_1}{\2ms} |\ln \varepsilon| \varepsilon^{\alpha} + O(\varepsilon^\alpha)_{\varepsilon \searrow 0} &\mbox{ if } \alpha < n, \\
& \frac1{\2ms} \Snm^{-n/m} + O(\varepsilon^{n(1-\ga)})_{\varepsilon \searrow 0}&\mbox{ if } \al \geq n,
\end{aligned}
\right.
\end{equation}
for any $0< \ga < 1/2$.
\end{lemma}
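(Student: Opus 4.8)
The plan is to reduce the statement to Lemma~\ref{Tiemcan} applied with $C=1$, plus one lower-order correction. Since the weight $1/(\2ms+|x|^\alpha)$ equals $1/\2ms$ at the origin — where $u_\varepsilon=\eta u_\varepsilon^*$ concentrates — and $u_\varepsilon$ decays away from it, I would write
\[
\frac1{\2ms+|x|^\alpha}=\frac1{\2ms}-\frac{|x|^\alpha}{\2ms(\2ms+|x|^\alpha)},
\]
so that
\[
\int_B\frac{|u_\varepsilon|^{\2ms+|x|^\alpha}}{\2ms+|x|^\alpha}\,dx=\frac1{\2ms}\int_B|u_\varepsilon|^{\2ms+|x|^\alpha}\,dx-R_\varepsilon,\qquad 0\le R_\varepsilon\le\frac1{(\2ms)^2}\int_B|x|^\alpha\,|u_\varepsilon|^{\2ms+|x|^\alpha}\,dx.
\]
For the first term I would invoke Lemma~\ref{Tiemcan} with $C=1$ (so $C^{\2ms}\mathscr C_1=\mathscr C_1$ and $A_{n,m}=2^{(n-2m)/2}$), but reading its remainder more precisely: for $\alpha<n$ the error in Lemma~\ref{Tiemcan} is in fact $O(\varepsilon^\alpha)$ rather than the weaker $o(\varepsilon^\alpha|\ln\varepsilon|)$ stated there. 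This is visible from its proof: once $\gamma$ is fixed with $n(1-\gamma)>\alpha$, every quantity absorbed into the error in \eqref{eq:adgtong}--\eqref{eq:dg5} is either a genuine constant multiple of $\varepsilon^\alpha$ — coming from the factor $\ln A_{n,m}\neq0$ in \eqref{eq:dg3} and from \eqref{eq:dg4}, which is exactly what prevents an $o(\varepsilon^\alpha)$ — or else is $O(\varepsilon^{n(1-\gamma)})$, $O(\varepsilon^{\alpha+(1-\gamma)(n-\alpha)}|\ln\varepsilon|)$, $O(\varepsilon^{2\alpha}(\ln\varepsilon)^2)$ and so on, all of which are $o(\varepsilon^\alpha)$. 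Hence $\frac1{\2ms}\int_B|u_\varepsilon|^{\2ms+|x|^\alpha}\,dx$ equals $\frac1{\2ms}\Snm^{-n/m}+\frac{\mathscr C_1}{\2ms}|\ln\varepsilon|\varepsilon^\alpha+O(\varepsilon^\alpha)$ if $\alpha<n$, and $\frac1{\2ms}\Snm^{-n/m}+O(\varepsilon^{n(1-\gamma)})$ if $\alpha\ge n$.

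Next I would bound $R_\varepsilon$, i.e. estimate $\int_B|x|^\alpha|u_\varepsilon|^{\2ms+|x|^\alpha}\,dx$, by repeating the three-region split from the proof of Lemma~\ref{Tiemcan}: over $B\setminus B_{\varepsilon^\gamma}$, $B_{\varepsilon^\gamma}\setminus B_{a_\varepsilon}$ and $B_{a_\varepsilon}$, with $a_\varepsilon\sim\sqrt\varepsilon$ and $u_\varepsilon^*\ge1$ precisely on $B_{a_\varepsilon}$. Outside $B_{\varepsilon^\gamma}$ one has $|x|^\alpha\le1$ and $u_\varepsilon\le1$, so that part is $\le\int_{B\setminus B_{\varepsilon^\gamma}}|u_\varepsilon|^{\2ms}\,dx=O(\varepsilon^{n(1-\gamma)})$. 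On $B_{\varepsilon^\gamma}$ one uses, as in that proof, the uniform expansion $|u_\varepsilon|^{|x|^\alpha}=1+O(|x|^\alpha|\ln\varepsilon|)$ (valid since $|x|^\alpha|\ln u_\varepsilon|\le C\varepsilon^{\gamma\alpha}|\ln\varepsilon|\to0$ there), so that $|x|^\alpha|u_\varepsilon|^{\2ms+|x|^\alpha}=|x|^\alpha|u_\varepsilon|^{\2ms}+O\big(|\ln\varepsilon|\,|x|^{2\alpha}|u_\varepsilon|^{\2ms}\big)$; integrating and invoking the asymptotics \eqref{eq:claimtiemcan} with $\delta=0$, first with $\beta=\alpha$ and then with $\beta=2\alpha$, gives after a short case check on the sign of $2\alpha-n$ that $\int_{B_{\varepsilon^\gamma}}|x|^\alpha|u_\varepsilon|^{\2ms+|x|^\alpha}\,dx=O(\varepsilon^\alpha)$ when $\alpha<n$ and $=O(\varepsilon^{n(1-\gamma)})$ when $\alpha\ge n$. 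Therefore $R_\varepsilon=O(\varepsilon^\alpha)$ for $\alpha<n$ (again choosing $\gamma$ with $n(1-\gamma)>\alpha$) and $R_\varepsilon=O(\varepsilon^{n(1-\gamma)})$ for $\alpha\ge n$.

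Combining the two pieces gives \eqref{eq:II}: for $\alpha<n$, the main term $\frac1{\2ms}\Snm^{-n/m}$, the logarithmic term $\frac{\mathscr C_1}{\2ms}|\ln\varepsilon|\varepsilon^\alpha$ (untouched, since $R_\varepsilon$ is only $O(\varepsilon^\alpha)$), and an $O(\varepsilon^\alpha)$ error; for $\alpha\ge n$, the main term with an $O(\varepsilon^{n(1-\gamma)})$ error. Equivalently, one can skip the splitting altogether and simply rerun the proof of Lemma~\ref{Tiemcan} line by line with the extra bounded factor $1/(\2ms+|x|^\alpha)=\frac1{\2ms}+O(|x|^\alpha)$ carried through each integral. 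The one genuinely delicate point — and where I expect a careful reader to slow down — is the error bookkeeping: checking that the $o(\varepsilon^\alpha|\ln\varepsilon|)$ of Lemma~\ref{Tiemcan} is really $O(\varepsilon^\alpha)$ and that the correction $R_\varepsilon$ does not put a factor $|\ln\varepsilon|$ back at order $\varepsilon^\alpha$; everything else is a routine repetition of estimates already carried out in the proof of Lemma~\ref{Tiemcan}.
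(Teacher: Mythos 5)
Your proposal is correct and follows essentially the same route as the paper: the same splitting $\tfrac{1}{\2ms+|x|^\alpha}=\tfrac1{\2ms}-\tfrac{|x|^\alpha}{\2ms(\2ms+|x|^\alpha)}$, Lemma~\ref{Tiemcan} with $C=1$ for the main term, and the three-region estimates plus \eqref{eq:claimtiemcan} for the correction, which the paper records as \eqref{phanhieu}. Your extra observation is well taken and in fact necessary: the remainder $o(\varepsilon^\alpha|\ln\varepsilon|)$ as literally stated in \eqref{eq:tiemcan3} is too weak to produce the $O(\varepsilon^\alpha)$ remainder claimed in \eqref{eq:II}, and one must, as you do, go back into the proof of Lemma~\ref{Tiemcan} (the $\ln A_{n,m}$ contribution in \eqref{eq:dg3}, the $\delta=1$ case of \eqref{eq:claimtiemcan} in \eqref{eq:dg4}, and the $o(\varepsilon^\alpha)$ tails) to see that the error there is genuinely $O(\varepsilon^\alpha)$ — a point the paper passes over silently.
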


\begin{proof}
To proceed, we note that $u_\varepsilon^*(x) \geq 1$ if and only if
\[
|x| \leq ( 2\varepsilon -\varepsilon^2 ) ^{1/2} =: b_\varepsilon.
\] 
Notice also that $b_\varepsilon \sim \sqrt \varepsilon $ as $\varepsilon \to 0$. For any $0< \gamma < 1/2$, we have $b_\varepsilon < \varepsilon^\ga$ for $\varepsilon >0$ small enough.
\begin{equation}\label{eq:IItach}
\begin{aligned}
\int_B \frac{1}{\2ms + |x|^{\alpha}}  |u_\varepsilon(x)|^{\2ms + |x|^\alpha} dx &=\frac1{\2ms} \int_{B} |u_\varepsilon(x)|^{\2ms + |x|^\alpha} dx\\
&\quad  -\frac1{\2ms} \int_{B} \frac{|x|^\al}{\2ms + |x|^{\alpha}} |u_\varepsilon(x)|^{\2ms + |x|^\alpha} dx.
\end{aligned}
\end{equation}
Since $u_\varepsilon \leq 1$ on $B \setminus B_{\varepsilon^\ga}$, we have
\begin{align*}
0 &\leq \int_{B\setminus B_{\varepsilon^\ga}} \frac{|x|^\al}{\2ms + |x|^{\alpha}} |u_\varepsilon(x)|^{\2ms + |x|^\alpha} dx \\
&\leq \frac1{\2ms} \int_{B\setminus B_{\varepsilon^\ga}} |x|^\al |u_\varepsilon^*(x)|^{\2ms + |x|^\alpha} dx\\
&\leq \frac1{\2ms} 2^n \varepsilon^\al \int_{B_{\varepsilon^{-1}}\setminus B_{\varepsilon^{\ga-1}}} |x|^\al (1+ |x|^2)^{-n} dx.
\end{align*}
By the direct computations, we have
\begin{equation}\label{eq:benngoai}
\int_{B\setminus B_{\varepsilon^\ga}} \frac{|x|^\al}{\2ms + |x|^{\alpha}} |u_\varepsilon(x)|^{\2ms + |x|^\alpha} dx =
\left\{
 \begin{aligned}
& O(\varepsilon^{n(1-\gamma) + \al \ga})_{\varepsilon \searrow 0} &\mbox{ if } \al < n,\\
& O(-\varepsilon^{n} \ln \vep)_{\varepsilon \searrow 0}&\mbox{ if } \al =n,\\
& O(\varepsilon^n)_{\varepsilon \searrow 0} &\mbox{ if }\al > n.
\end{aligned}
\right.
\end{equation}
Repeating the proof of Lemma \ref{Tiemcan}, we have
\begin{align*}
0 & \leq \int_{B_{\varepsilon^\ga}}\frac{|x|^\al |u_\varepsilon|^{\2ms + |x|^\alpha}}{2_m^* + |x|^\al} dx\notag \\
&\leq \frac1{\2ms}\int_{B_{\varepsilon^\ga}}|x|^\al |u_\varepsilon|^{\2ms + |x|^\alpha} dx \notag\\
&=\frac1{\2ms} \int_{B_{\varepsilon^\ga}} |x|^\al |u_\varepsilon|^{\2ms} dx +\frac{n-2m}{2\2ms} \ln \frac2\varepsilon \int_{B_{\varepsilon^\ga}} |u_\varepsilon|^{\2ms} |x|^{2\alpha} dx \notag\\
& \quad -\frac{n-2m}{2\2ms} \int_{B_{\varepsilon^\ga}} |u_\varepsilon|^{\2ms} \ln \Big( 1 + \frac{|x|^2}{\varepsilon^2} \Big) |x|^{2\alpha} dx \notag\\
&\quad + O\Big( \int_{B_{\varepsilon^{\gamma}} } |u_\varepsilon|^{\2ms} \Big( \ln \frac2\varepsilon - \ln \big(1 + \frac{|x|^2}{\varepsilon^2} \big) \Big) ^2 |x|^{3\alpha}  dx\Big)_{\varepsilon \searrow 0}.
\end{align*}
The claim \eqref{eq:claimtiemcan} implies
\begin{align}\label{eq:TCII}
\int_{B_{\varepsilon^\ga}}\frac{|x|^\al |u_\varepsilon|^{\2ms + |x|^\alpha}}{2_m^* + |x|^\al} dx = 
\left\{
\begin{aligned}
& O(\varepsilon^\al) &\mbox{ if } \al < n,\\
& O(-\varepsilon^n \ln \varepsilon) &\mbox{ if } \al =n,\\
& O(\varepsilon^{\ga \al + n(1-\ga)}) &\mbox{ if } \al >n.
\end{aligned}
\right.
\end{align}
Combining \eqref{eq:benngoai} and \eqref{eq:TCII}, we get
\begin{equation}\label{phanhieu}
\int_{B} \frac{|x|^\al}{\2ms + |x|^{\alpha}} |u_\varepsilon(x)|^{\2ms + |x|^\alpha} dx = 
\left\{
\begin{aligned}
& O(\varepsilon^{\al})_{\varepsilon \searrow 0} &\mbox{ if } \al < n,\\
& O(-\varepsilon^{n} \ln \vep)_{\varepsilon \searrow 0}&\mbox{ if }\al =n,\\
& O(\varepsilon^n)_{\varepsilon \searrow 0} &\mbox{ if }\al > n.
\end{aligned}
\right.
\end{equation}
Inserting \eqref{phanhieu} and \eqref{eq:tiemcan3} with $C =1$ into \eqref{eq:IItach}, we obtain \eqref{eq:II}.
\end{proof}

Finally, let us recall a Brezis--Lieb lemma in the variable exponent Lebesgue spaces; see \cite[Lemma $3.4$]{BFS}.

\begin{lemma}\label{BL}
Let $f_j \to f$ a.e. and $f_j \rightharpoonup f$ weakly in $L_{p(x)}$, then
\[
\int |f_j|^{p(x)} dx = \int |f_j -f|^{p(x)}dx + \int |f(x)|^{p(x)} dx + o(1)_{j \nearrow +\infty}.
\]
\end{lemma}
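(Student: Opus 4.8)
The plan is to run the classical Brezis--Lieb argument, with the fixed power $p$ replaced by the measurable exponent $p(x)$ and with all limiting arguments carried out at the level of the modular $g\mapsto\int |g|^{p(x)}\,dx$ rather than the Luxemburg norm. Throughout I use that $p_-:=\inf p$ and $p_+:=\sup p$ satisfy $1\le p_-\le p_+<+\infty$, which is the only situation relevant here (in the application $p(x)=2_m^\star+|x|^\alpha$, so $p_-=2_m^\star$ and $p_+=2_m^\star+1$). This is essentially \cite[Lemma~3.4]{BFS}; I outline the scheme for completeness.

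\textbf{Step 1 (a uniform elementary inequality).} First I would record that for every $\varepsilon>0$ there is $C_\varepsilon=C_\varepsilon(p_-,p_+)>0$ with
\[
\big||a+b|^{t}-|a|^{t}\big|\le\varepsilon\,|a|^{t}+C_\varepsilon\,|b|^{t}\qquad\text{for all }a,b\in\R,\ t\in[p_-,p_+].
\]
For each fixed $t\ge1$ this is the inequality underlying the usual Brezis--Lieb lemma; the only new point is that $C_\varepsilon$ may be chosen independent of $t$ on the compact interval $[p_-,p_+]$, which follows by a routine compactness argument (or by inspecting an explicit constant). Taking $a=f_j(x)-f(x)$, $b=f(x)$, $t=p(x)$ and using $|f_j|^{p(x)}=|a+b|^{p(x)}$ gives, for a.e.\ $x$,
\[
\Big||f_j|^{p(x)}-|f_j-f|^{p(x)}-|f|^{p(x)}\Big|\le\varepsilon\,|f_j-f|^{p(x)}+(C_\varepsilon+1)\,|f|^{p(x)}.
\]

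\textbf{Step 2 (boundedness of the modulars).} Since $f_j\rightharpoonup f$ weakly in $L_{p(x)}$, the uniform boundedness principle gives $\sup_j\|f_j\|_{L_{p(x)}}<+\infty$; combined with the standard modular--norm inequality $\int|g|^{p(x)}\,dx\le\max\big(\|g\|_{L_{p(x)}}^{p_-},\|g\|_{L_{p(x)}}^{p_+}\big)$ this yields $M:=\sup_j\int|f_j|^{p(x)}\,dx<+\infty$. Because $|f_j(x)|^{p(x)}\to|f(x)|^{p(x)}$ a.e., Fatou's lemma applied to the nonnegative functions $|f_j|^{p(x)}$ gives $\int|f|^{p(x)}\,dx\le M$; and from $|a-b|^{t}\le2^{p_+-1}(|a|^{t}+|b|^{t})$ one gets $M':=\sup_j\int|f_j-f|^{p(x)}\,dx<+\infty$.

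\textbf{Step 3 (truncation, dominated convergence, conclusion).} Set
\[
W_{j,\varepsilon}(x):=\Big(\Big||f_j|^{p(x)}-|f_j-f|^{p(x)}-|f|^{p(x)}\Big|-\varepsilon\,|f_j-f|^{p(x)}\Big)_+ .
\]
By Step~1, $0\le W_{j,\varepsilon}\le(C_\varepsilon+1)|f|^{p(x)}\in L^1$; and for a.e.\ $x$ one has $f_j(x)\to f(x)$, hence $|f_j(x)|^{p(x)}\to|f(x)|^{p(x)}$ and $|f_j(x)-f(x)|^{p(x)}\to0$, so $W_{j,\varepsilon}(x)\to0$. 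Dominated convergence gives $\int W_{j,\varepsilon}\,dx\to0$ as $j\to\infty$, for each fixed $\varepsilon>0$. From the pointwise bound $\big||f_j|^{p(x)}-|f_j-f|^{p(x)}-|f|^{p(x)}\big|\le W_{j,\varepsilon}+\varepsilon|f_j-f|^{p(x)}$, integrating and using Step~2,
\[
\limsup_{j\to\infty}\int\Big||f_j|^{p(x)}-|f_j-f|^{p(x)}-|f|^{p(x)}\Big|\,dx\le\varepsilon M',
\]
and letting $\varepsilon\searrow0$ forces this $\limsup$ to vanish, which is the asserted identity. The only step that is not a direct transcription of the classical proof is Step~1 --- ensuring the elementary inequality holds with a constant uniform in the exponent across $[p_-,p_+]$; once that is in hand, the rest is the standard truncation argument with "power $p$" replaced everywhere by "modular $\int(\cdot)^{p(x)}$" and with Fatou's lemma and dominated convergence invoked at the modular level.
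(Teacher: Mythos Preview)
Your argument is correct and follows the standard Brezis--Lieb scheme, with the one genuinely new ingredient (uniformity of the elementary inequality in the exponent over $[p_-,p_+]$) properly identified. The paper itself does not give a proof of this lemma at all; it simply quotes it from \cite[Lemma~3.4]{BFS}, so there is nothing to compare against beyond noting that your outline is precisely the classical proof transported to the modular setting, which is what that reference does.
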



\section{The supercritical Sobolev type inequality}

\subsection{The existence of the sharp constant $\Unm$: Proof of Theorem \ref{Supercritical}}

Instead of proving Theorem \ref{Supercritical} for the exponent $\2ms+|x|^\alpha$, we shall prove a more general result for the exponent
\[
\2ms + f(r),
\] 
where $f$ is a function satisfying the assumptions ($f_1$) and ($f_2$) from Lemma \ref{fract}. In this sense, we are about to show that
\[
\sup\Big\{\int_B |u(x)|^{\2ms + f(|x|)} dx   :  u\in \Hmr,  \|\nabla^m u\|_{L^2(B)} \leq 1\Big\} < + \infty.
\]
Furthermore, because the case $m=1$ was already considered in \cite{doO}, we
do not treat the case $m=1$ here. Instead, we only consider the case $m \geq 2$. Let $u \in \Cr$ such that $\|\nabla^m u\|_{L^2(\Om)} \leq 1$. The Hardy--Rellich inequality \eqref{eq:HR} tells us that 
\[
\int_0^1 (u'(r))^2 r^{n-2m+ 1} dr \leq \frac{1}{\CHR \omega_{n-1}}.
\]
Define 
$
w(s) = u(s^{1/m}).
$ 
Clearly, we have 
\[
w'(s) = \frac1m s^{\frac1m -1 } u'(s^{1/m}).
\]
Therefore
\[
\int_0^1 (u'(r))^2 r^{n-2m+ 1} dr = m^2 \int_0^1 (w'(s))^2 s^{\frac nm -1} ds.
\]
Hence, on one hand we obtain
\[
\int_0^1 (w'(s))^2 s^{\frac nm -1} ds \leq \frac{1}{m \CHR \omega_{n-1}} =: a.
\]
On the other hand, by making the change of variable $r =s^{1/m}$, we get
\[
\int_0^1 |u(r)|^{\2ms + f(r)} r^{n-1} dr =\frac1m \int_0^1 |w(s)|^{\2ms + f(s^{1/m})} s^{\frac nm-1} ds.
\]
Note that the function $g: s \mapsto f(s^{1/m})$ still satisfies the assumptions ($f_1$) and ($f_2$) of Lemma \ref{fract}. We next apply Lemma \ref{fract} for $\beta = n/m >2$ and the function $g$ to get
\[
\int_0^1 |w(s)|^{\2ms + g(s)} s^{\frac nm -1} ds \leq S_{\frac nm}(g,a),
\]
which implies
\[
\int_B |u(x)|^{\2ms + f(|x|)} dx = \omega_{n-1}\int_0^1 |u(r)|^{\2ms + f(r)} r^{n -1} dr \leq \frac{\omega_{n-1}}{m} S_{\frac nm}(g,a).
\]
This finishes the proof of this Theorem.


\subsection{The embedding $\Hmr \hookrightarrow L_{\2ms + |x|^\alpha}(B)$: Proof of Corollary \ref{varexspace}}

As in the preceding subsection, we can also enhance Corollary \ref{varexspace} by replacing the exponent $\2ms+|x|^\alpha$ by the following general exponent
\[
\2ms + f(r),
\] 
where, again, $f$ is a function satisfying the assumptions ($f_1$) and ($f_2$) from Lemma \ref{fract}. Unlike the proof of Theorem \ref{Supercritical}, our argument below also works for the case $m=1$. Let $u \in \Hmr\not\equiv 0$ be arbitrary and define $v = u/\|\nabla^m u\|_{L^2(B)}$. We clearly have $\|\nabla^m v\|_{L^2(B)} =1$. Then Theorem \ref{Supercritical} implies
\[
\int_B \Big|\frac{u(x)}{\|\nabla^m u\|_{L^2(B)}}\Big|^{\2ms + f(|x|)} dx \leq C 
\]
for some constant $C >0$ independent of $u$. This shows that $u \in L_{\2ms +f(|x|)}(B)$. Taking $\lambda_* \gg 1$ such that $C \lambda_*^{-\2ms} \leq 1$. Then we can estimate
\begin{align*}
\int_B \Big| \frac{u(x)}{\lambda_* \|\nabla^m u\|_{L^2(B)}}\Big|^{\2ms +f(|x|)} dx &= \lambda_*^{-\2ms} \int_B \Big|\frac{u(x)}{\|\nabla^m u\|_{L^2(B)}}\Big|^{\2ms + f(|x|)} \lambda_*^{-f(|x|)} dx\\
& \leq \lambda_*^{-\2ms} \int_B \Big|\frac{u(x)}{\|\nabla^m u\|_{L^2(B)}}\Big|^{\2ms + f(|x|)} dx \\
&\leq C \lambda_*^{-\2ms} \\
&\leq 1.
\end{align*}
By the definition of the norm $\|\cdot\|_{L_{\2ms + f(|x|)}(B)}$, we get
\[
\|u\|_{L_{\2ms + f(|x|)}(B)} \leq \lambda_* \|\nabla^m u\|_{L^2(B)}.
\]
This inequality proves the continuity of the embedding 
\[
\Hmr \hookrightarrow L_{\2ms + f(|x|)}(B).
\] 
In particular, given $\alpha > 0$, the embedding $\Hmr \hookrightarrow L_{\2ms + |x|^\alpha}(B)$ is continuous. 


\subsection{The inequality $\Unm \geq \Smn$: Proof of Theorem \ref{Strict}}
 
Assume that $1 \leq m < n/2$ and $\alpha > 0$. Let us define
\[
\bar u_\varepsilon(x) = \Snm^{n/(2m)} \eta(x) u_\varepsilon^*(x).
\]
We then have from \eqref{eq:tiemcan1} and \eqref{eq:tiemcan2} that
\begin{equation}\label{eq:tiemcan11}
\Big( \int_B |\nabla^m \bar u_\varepsilon|^2 dx \Big)^{1/2} = 1 + O(\varepsilon^{n-2m})_{\varepsilon \searrow 0},
\end{equation}
and
\begin{equation*}\label{eq:tiemcan21}
\int_B |\bar u_\varepsilon |^{\2ms} dx =  \Smn  + O(\varepsilon^n)_{\varepsilon \searrow 0}.
\end{equation*}
In view of \eqref{eq:tiemcan11}, there exists $C > 0$ such that
\[
0 < 1- C\varepsilon^{n-2m} \leq \|\nabla^m \bar u_\varepsilon\|_{L^2(B)} \leq 1 + C \varepsilon^{n-2m} 
\]
for $\varepsilon >0$ small enough. Hence, for some constant $C'>0$ and $\varepsilon >0$ small enough, there holds
\begin{align*}
\|\nabla^m \bar u_\varepsilon\|_{L^2(B)}^{\2ms+ |x|^\alpha} &\leq (1 + C \varepsilon^{n-2m})^{\2ms + |x|^\alpha}\\
& \leq (1 + C \varepsilon^{n-2m})^{\2ms+1} \leq 1 + C'\varepsilon^{n-2m} 
\end{align*}
everywhere on $B$. Similar, for some constant $C'>0$ and $\varepsilon >0$ small enough, we have
\begin{align*}
\|\nabla^m \bar u_\varepsilon\|_{L^2(B)}^{\2ms+ |x|^\alpha} & \geq (1 - C \varepsilon^{n-2m})^{\2ms + |x|^\alpha} \\
& \geq (1 - C \varepsilon^{n-2m})^{\2ms+1} \geq 1 -C''\varepsilon^{n-2m}
\end{align*}
on $B$. Consequently, we get
\[
\|\nabla^m \bar u_\varepsilon\|_{L^2(B)}^{\2ms+ |x|^\alpha} = 1 + O(\varepsilon^{n-2m})_{\varepsilon \searrow 0}
\]
on $B$ and hence 
\[
\|\nabla^m \bar u_\varepsilon\|_{L^2(B)}^{-\2ms- |x|^\alpha} = 1 + O(\varepsilon^{n-2m})_{\varepsilon \searrow 0} 
\]
everywhere on $B$. Now Lemma \ref{Tiemcan} with $C =\Snm^{n/(2m)}$ implies
\[
\int_B |\bar u_\varepsilon|^{\2ms + |x|^\alpha} dx \geq \Smn + o(1)_{\varepsilon \searrow 0}.
\]
Hence, there holds
\begin{align*}
\Unm&\geq \int_B \Big|\frac{\bar u_\varepsilon}{\|\nabla^m \bar u_\varepsilon\|_{L^2(B)}}\Big|^{\2ms + |x|^\alpha} dx\\
&= \int_B |\bar u_\varepsilon|^{\2ms + |x|^\alpha} \|\nabla^m \bar u_\varepsilon\|_{L^2(B)}^{-\2ms - |x|^\alpha}  dx\\
&= (1+ O(\varepsilon^{n-2m})_{\varepsilon \searrow 0}) \int_B |\bar u_\varepsilon|^{\2ms + |x|^\alpha} dx\\
&\geq (1+ O(\varepsilon^{n-2m})_{\varepsilon \searrow 0})(\Smn + o(1)_{\varepsilon \searrow 0}).
\end{align*}
Now we send $\varepsilon \searrow 0$ to conclude that
\[
\Unm \geq \Smn.
\]
This proves \eqref{U>=S}.

We now prove the strict inequality \eqref{U>S}. Suppose $0< \alpha \leq n -2m$. Applying Lemma \ref{Tiemcan} with $C =\Snm^{n/(2m)}$ gives
\begin{align*}
\Unm &\geq (1+ O(\varepsilon^{n-2m})_{\varepsilon \searrow 0}) \big[\Smn + \Smn^{n/(2m)} \mathscr C_1 |\ln \varepsilon| \varepsilon^\alpha + o(\varepsilon^\alpha|\ln \vep|)_{\varepsilon \to 0 }\big]\\
&=\Smn + \Smn^{n/(2m)} \mathscr C_1 |\ln \varepsilon| \varepsilon^\alpha + O(\varepsilon^{n-2m})_{\varepsilon \searrow 0}+ o(\varepsilon^\alpha|\ln \vep|)_{\varepsilon \searrow 0}\\
&> \Smn
\end{align*}
for $\varepsilon >0$ small enough since $\alpha \leq n -2m$.

Finally, we study the limit  of $\Unm$ as $\alpha \nearrow +\infty$. In view of \eqref{U>=S}, we always have
\[
\liminf_{\al\to +\infty} \Unm \geq \Smn.
\]
To finish the proof of \eqref{U-infinity}, we only have to check
\[
\limsup_{\al\to +\infty} \Unm \leq \Smn.
\]
By way of contradiction, there exists an increasing sequence $(\alpha_i)_{i \geq 1}$ with $\alpha_ i \nearrow +\infty$ such that
\begin{equation}\label{eq:tocontradict}
\lim_{i \to +\infty} \Unma > \Smn.
\end{equation}
(We may assume at the  beginning that $\alpha_1 > 1$, just for convenience.) For each $i$, from the definition of $\Unma$, we can choose $u_i \in \Hmr$ with $\|\na^m u_i\|_{L^2(B)} =1$ and with
\[
\int_{B} |u_i|^{\2ms + |x|^{\al_i}} dx \geq \Unma -\frac1i.
\]
Since $(u_i)_i$ is bounded in $\Hmr$, up to a subsequence which we still denote by $(u_i)_i$, we can assume that
\begin{itemize}
  \item $u_i \rightharpoonup u_0$ weakly in $\Hmr$ and
  \item $u_i \to u_0$ a.e. in $B$. 
\end{itemize} 
In view of \eqref{eq:HR} and because $\|\nabla^m u_i \|_{L^2(B)}=1$, we have
\[
\int_B \frac{|\na u_i|^2}{|x|^{2(m-1)}} dx \leq \frac{1}{\CHR},
\]
which implies
\[
\int_0^1 (u'_i(r))^2 r^{n-2m+1} dr \leq \frac1{\omega_{n-1} \CHR} =:C
\]
for any $j \geq 1$. Similar to the estimate \eqref{eq:est1} and by a simple density argument, we easily get for any $i \geq 1$ that
\begin{equation}\label{eq:est11}
|u_i(x)| \leq \Big( \frac C{n-2m} \Big) ^{1/2} |x|^{-\frac{n-2m}2}
\end{equation}
for a.e. in $B$. Consequently, by Lebesgue's dominated convergence theorem, we have
\[
\lim_{i \nearrow +\infty} \int_{B\setminus B_s} |u_i|^{\2ms + |x|^{\al_i}} dx = \lim_{i \nearrow +\infty} \int_{B\setminus B_s} |u_i|^{\2ms} dx   = \int_{B\setminus B_s} |u_0|^{\2ms} dx
\]
for any $ 0< s < 1$ but fixed. Thus, we can write
\begin{equation}\label{eq:tiemcanivocung}
\int_{B\setminus B_s} |u_i|^{\2ms + |x|^{\al_i}} dx =  \int_{B\setminus B_s} |u_i|^{\2ms} dx  + o_s(1)_{i \nearrow+\infty},
\end{equation}
here by $o_s(1)_{i \nearrow +\infty}$ we mean $\lim_{i \nearrow +\infty} o_s(1) =0$ for each $s$ fixed. Taking $s_0$ in such a way that
\[
\Big( \frac C{n-2m} \Big) ^{1/2} s_0^{-\frac {n-2m}2} =1.
\] 
Hence, on $B_s$ with $0< s < \min \{1, s_0\} $, thanks to $\alpha_i > 1$, we have
\[
|u_i(x)|^{|x|^{\al_i}} \leq \Big(\Big( \frac C{n-2m} \Big) ^{1/2}s^{-\frac{n-2m}2}\Big)^{s^{\al_i}} \leq \Big(\Big( \frac C{n-2m} \Big) ^{1/2} s^{-\frac{n-2m}2}\Big)^s.
\]
Since
\[
\lim_{s\to 0} \Big(\Big( \frac C{n-2m} \Big) ^{1/2} s^{-\frac{n-2m}2}\Big)^s = 1,
\] 
for any $\varepsilon >0$, we can choose $s < \min \{1, s_0\} $ such that 
\[
\Big(\Big( \frac C{n-2m} \Big) ^{1/2} s^{-\frac{n-2m}2}\Big)^s \leq 1+\vep.
\]
Therefore, we have
\[
\int_{B_s} |u_i|^{\2ms + |x|^{\al_i}} dx \leq (1+ \varepsilon)\int_{B_s} |u_i|^{\2ms} dx.
\]
This estimate together with \eqref{eq:tiemcanivocung} implies
\begin{align*}
\Unma-\frac1i  &\leq \int_{B} |u_i|^{\2ms + |x|^{\al_i}} dx \\
& \leq (1+ \varepsilon) \int_B |u_i|^{\2ms} dx + o_s(1)_{i \nearrow+\infty} \\
&\leq (1+ \varepsilon)\Smn + o_s(1)_{i \nearrow+\infty}.
\end{align*}
Letting $i\to +\infty$ and then $\varepsilon \to 0$, we obtain
\[
\lim_{i \nearrow +\infty} \Unma \leq \Smn,
\]
which contradicts \eqref{eq:tocontradict}. This contradiction completes the proof.

\subsection{The sharp constant $\Unm$ is attained: Proof of Theorem \ref{Attain}}

Assume that $1 \leq m < n/2$, that $\alpha > 0$, and that $\Unm > \Snm$. Let $(u_j)_j$ be a maximizing sequence for $\Unm$ in $\Hmr$. By normalizing $u_j$, if necessary, we may assume that $\|\nabla^m u_j \|_{L^2(B)}=1$. By the boundedness of $(u_j)_j$ in $\Hmr$ and the Sobolev embedding, there exists some function $u\in \Hmr$ such that
\begin{itemize}
 \item $u_j \hookrightarrow u$ weakly in $H_0^m(B)$,
 \item $u_j \to u$ strongly in $H_0^k(B)$ for any $0 \leq k < m$, and 
 \item $u_j \to u$ a.e. in $B$,
\end{itemize} 
as $j \to +\infty$. We claim that $u\not\equiv 0$. Indeed, suppose that $u\equiv 0$. Let $\eta$ be the cut-off function on $B$ used in the proof of Theorem \ref{Strict} and for $\delta >0$ we define
\[
\eta_\de(x) = \eta(x/\de).
\] 
We note that
\[
\nabla^m( \eta_\delta u_j) = \eta_\delta \nabla^m u_j + F_j,
\]
where $F_j$ is linear combination of derivatives of $u_j$ with order strictly less than $m$. Hence $\|F_j\|_{L^2(B)} \to 0$ by a compact embedding. Put
\[
a_{\de,j} = \|\nabla^m (\eta_\delta u_j)\|_{L^2(B)}.
\] 
Since $a_{\de,j} \leq \|\eta_\delta \nabla^m u_j\|_{L^2(B)} + \|F_j\|_{L^2(B)}$ and $0\leq \eta_\delta \leq 1$, we have
\begin{equation*}\label{eq:limsup2}
\limsup_{j \nearrow +\infty}a_{\de,j} \leq 1,
\end{equation*}
for any $\delta >0$. Since
\[
\lim_{r\to 0} \Big( \frac C{n-2m} r^{2m-n} \Big) ^{r^\alpha/2} =1,
\]
for arbitrary $\varepsilon >0$ but fixed, we can choose some $\delta >0$ in such a way that 
\[
 \Big( \frac C{n-2m} r^{2m-n} \Big) ^{r^\alpha/2} \leq 1+ \varepsilon
\]
for any $0 < r < \de$. Fix such a $\delta >0$, we have by \eqref{eq:est11} that
\begin{align*}
(\eta_\delta |u_j(x)|)^{\2ms + |x|^\alpha} &\leq (\eta_\delta |u_j(x)|)^{\2ms} \Big( \frac C{n-2m} |x|^{2m-n} \Big) ^{|x|^\alpha/2} \\
&\leq (1+ \varepsilon)(\eta_\delta |u_j(x)|)^{\2ms}.
\end{align*}
Integrating over $B$, we get
\begin{align}\label{eq:int1}
\int_B (\eta_\delta |u_j(x)|)^{\2ms + |x|^\alpha} dx &\leq (1+ \varepsilon)\int_B (\eta_\delta |u_j(x)|)^{\2ms} dx\notag\\
& \leq (1+ \varepsilon)\int_B |u_j(x)|^{\2ms} dx\notag\\
& \leq (1+ \varepsilon) \Smn,
\end{align}
where the last inequality follows from the characterization of the sharp constant $\Snm$. On the other hand, because $\eta_\delta = 1$ in $B_{\delta/2}$, we estimate
\[
\int_B \Big( |u_j(x)|^{\2ms + |x|^\alpha} -(\eta_\delta |u_j(x)|)^{\2ms + |x|^\alpha} \Big) dx \leq \int_{B\setminus B_{\delta/2}} |u_j(x)|^{\2ms + |x|^\alpha} dx.
\]
The estimate \eqref{eq:est11} show that $|u_j(x)|^{\2ms + |x|^\alpha}$ is uniformly bounded on $B\setminus B_{\delta/2}$. Moreover $u_j \to 0$ a.e on $B$. We are now able to apply Lebesgue's dominated convergence theorem to get
\begin{equation}\label{eq:zero1}
\begin{aligned}
\limsup_{j \nearrow +\infty} \int_B \Big( |u_j(x)|^{\2ms + |x|^\alpha} & -(\eta_\delta |u_j(x)|)^{\2ms + |x|^\alpha} \Big) dx \\
&\leq \lim_{j \nearrow +\infty}\int_{B\setminus B_{\delta/2}} |u_j(x)|^{\2ms + |x|^\alpha} dx =0.
\end{aligned}
\end{equation}
Putting \eqref{eq:int1} and \eqref{eq:zero1} together, we get 
\[
\Unm = \lim_{j \nearrow +\infty} \int_B |u_j(x)|^{\2ms + |x|^\alpha} dx \leq (1+ \varepsilon) \Smn,
\]
for arbitrary $\varepsilon >0$ but fixed. This contradicts to our assumption $\Unm > \Smn$ if we choose $\varepsilon >0$ small enough. Hence, $u\not\equiv 0$ as claimed.

In the rest of the proof, we show that $u$ is an optimizer for $\Unm$. Recall that the embedding $\Hmr \hookrightarrow L_{\2ms + |x|^\alpha}(B)$ is continuous by means of Corollary \ref{varexspace}, which implies that $u_j \hookrightarrow u$ weakly in $L_{\2ms + |x|^\alpha}(B)$. By Lemma \ref{BL} we have
\begin{align}\label{eq:applyBL}
\Unm &= \int_B |u_j(x)|^{\2ms + |x|^\alpha} dx + o(1)_{j \nearrow +\infty} \notag\\
&= \int_B |u_j(x)-u(x)|^{\2ms +|x|^\alpha} dx + \int_B |u(x)|^{\2ms + |x|^\alpha} dx + o(1)_{j \nearrow +\infty}.
\end{align}
Since $u_j \rightharpoonup u$ weakly in $H_0^m(B)$, we have
\[
1 = \|\nabla^m u_j\|_{L^2(B)}^2 = \|\nabla^m u_j -\nabla^m u\|_{L^2(B)}^2 + \|\nabla^m u\|_{L^2(B)}^2 + o(1)_{i \nearrow +\infty}.
\]
Put
\[
a = \|\nabla^m u\|_{L^2(B)} \in (0,1]
\]
and
\[
a_j = \|\nabla^m u_j -\nabla^m u\|_{L^2(B)}.
\] 
Depending on the size of $a$, we have the following two possible cases:

\noindent\textbf{Case 1}. Suppose that $a < 1$. Then, we have
\[
\lim_{j\to\infty} a_j = (1-a^2)^{1/2} \in (0,1).
\]
Hence for $j$ large enough, there holds $0 < a_j < 1$. From \eqref{eq:applyBL} and the definition of $L_{\2ms + |x|^\alpha}$-norm, we get
\begin{align*}
\Unm& = \int_B \Big( \frac{|u_j(x)-u(x)|}{a_j} \Big) ^{\2ms + |x|^\alpha} a_j^{\2ms+ |x|^\alpha} dx \\
& \quad + \int_B \Big( \frac{|u(x)|}{a} \Big) ^{\2ms + |x|^\alpha} a^{\2ms + |x|^\alpha} dx + o(1)_{j \nearrow +\infty}\\
&\leq a_j^{\2ms}\int_B \Big( \frac{|u_j(x)-u(x)|}{a_j} \Big) ^{\2ms + |x|^\alpha} dx \\
& \quad + a^{\2ms} \int_B \Big( \frac{|u(x)|}{a} \Big) ^{\2ms + |x|^\alpha} dx + o(1)_{j \nearrow +\infty}\\
&\leq \big( a_j^{\2ms} + a^{\2ms} \big) \Unm + o(1)_{j \nearrow +\infty}.
\end{align*}
Letting $j \nearrow +\infty$ and dividing both sides by $\Unm$ we get
\[
1 \leq (1-a^2)^{\2ms/2} + (a^2)^{\2ms/2},
\]
which is impossible since $\2ms >2$ and $0 < a < 1$. 

\noindent\textbf{Case 2}. As shown above, the only possible value for $a$ is that $a =1$, which yields
\[
u_j \to u
\]
strongly in $H_0^m(B)$. Again by Corollary \ref{varexspace}, now we have the following convergence
\[
b_j:= \|u_j -u\|_{L_{\2ms + |x|^\alpha}(B)} \to 0
\] 
as $j \nearrow +\infty$. By the definition of the $L_{\2ms + |x|^\alpha}$-norm, we have
\begin{align*}
\int_B |u_j(x)-u(x)|^{\2ms + |x|^\alpha} dx &=\int_B \Big|\frac{u_j(x)-u(x)}{b_j}\Big|^{\2ms + |x|^\alpha} b_j^{\2ms + x|^\alpha} dx \\
&\leq b_j^{\2ms} \int_B\Big|\frac{u_j(x)-u(x)}{b_j}\Big|^{\2ms + |x|^\alpha}dx \\
&\leq b_j^{\2ms}
\end{align*}
for $j$ large enough. Consequently, we have
\[
\lim_{j \nearrow +\infty} \int_B |u_j(x)-u(x)|^{\2ms + |x|^\alpha} dx = 0.
\]
We are now in position to pass \eqref{eq:applyBL} to the limit as $j\to +\infty$  to get 
\[
\int_B |u(x)|^{\2ms + |x|^\alpha} dx = \Unm.
\]
This shows that $u$ is indeed a maximizer for $\Unm$. The proof is complete.


\section{Higher order elliptic problems: Proof of Theorem \ref{Superequation}}

We now turn our attention to the existence result for solutions to \eqref{eq:supercriticaleq}, namely,
\[
\left\{
\begin{aligned}
(-\De)^m u = & u^{\2ms +|x|^\alpha -1} & \mbox{ in }& B,\\
u >& 0 & \mbox{ in } &B,\\
\pa^j_r u = &0 & \mbox{ on } &\partial B, \quad j =0,\ldots,m-1 .
\end{aligned}
\right.
\]
Since the above problem has a variational structure, we employ variational methods. To this purpose, we consider the functional
\[
I(u) = \frac12 \int_B |\nabla^m u|^2 dx - \int_B \frac{1}{\2ms + |x|^{\alpha}} u_+(x)^{\2ms + |x|^\alpha} dx
\]
on $\Hmr$. By Theorem \ref{Supercritical}, the functional $I$ is well-defined and of class $C^1$ on $\Hmr$. Consequently, if $u \in \Hmr$ is a critical point of $I $, namely,
\[
\langle I(u), \phi \rangle =   \int_B \langle \nabla^m u, \nabla^m \phi \rangle dx - \int_B (u_+ )^{\2ms + |x|^\alpha - 1} \phi dx
\]
for any $\phi \in \Cr$, it is not hard to see that $u$ solves \eqref{eq:supercriticaleq} weakly. In the rest of the proof, we shall show that $I$ admits a critical point in $\Hmr$, which is a saddle point. To this aim, we shall apply a variant of the well-known mountain pass theorem of Ambrosetti and Rabinowitz without the Palais--Smale condition. This result is due to Brezis and Nirenberg; see \cite[Theorem $2.2$]{BN}. For this reason and  following \cite{doO} closely, our strategy is to prove the following facts:
\begin{enumerate}
\item[(A)] The level $$\frac mn \Snm^{-n/m}$$ is a non-compactness level for the functional $I$; see Lemma \ref{lemNonCompactLevel}.

\item[(B)] The mountain-pass level $c$ of the functional $I$, given by \eqref{MPlevel} below, satisfies
\[
c < \frac mn \Snm^{-n/m};
\]
see Lemma \ref{lemUpperBound4Level}.

\item[(C)] There exists a weak, non-trivial solution $u$ at level
\[
0 < c < \frac m n\Snm^{-n/m};
\] 
see Lemma \ref{lemExistenceWeak}.
\end{enumerate}

For clarity, we split our proof into several lemmas as below.

\begin{lemma}\label{lemNonCompactLevel}
The level $(m/n) \Snm^{-n/m}$ is a non-compactness level for the functional $I$.
\end{lemma}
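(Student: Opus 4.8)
The plan is to exhibit an explicit non-compact Palais--Smale sequence for $I$ at the level $(m/n)\Snm^{-n/m}$ using the truncated bubbles $u_\varepsilon = \eta u_\varepsilon^*$ from the ``Estimates for bubbles'' subsection. More precisely, I would set $v_\varepsilon = t_\varepsilon u_\varepsilon$, where the scaling factor $t_\varepsilon>0$ is chosen so that $v_\varepsilon$ is a critical point of $I$ restricted to the one-dimensional ray $\{t u_\varepsilon : t>0\}$; equivalently $t_\varepsilon$ is determined by
\[
t_\varepsilon^2 \int_B |\nabla^m u_\varepsilon|^2 dx = \int_B (u_\varepsilon)^{\2ms + |x|^\alpha} t_\varepsilon^{\2ms + |x|^\alpha}\, dx,
\]
which by the expansions \eqref{eq:tiemcan1} and Lemma \ref{Tiemcan} (with $C=1$) forces $t_\varepsilon \to 1$ as $\varepsilon \searrow 0$. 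Then I would verify the two required properties: first, $I(v_\varepsilon) \to (m/n)\Snm^{-n/m}$, and second, $I'(v_\varepsilon) \to 0$ in the dual of $\Hmr$, while $v_\varepsilon \rightharpoonup 0$ weakly (so no subsequence converges strongly). The weak convergence $v_\varepsilon \rightharpoonup 0$ is immediate since $\|u_\varepsilon\|_{L^p}\to 0$ for $p<\2ms$ and the mass concentrates at the origin.

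For the energy level, I would compute
\[
I(v_\varepsilon) = \tfrac12 t_\varepsilon^2 \int_B |\nabla^m u_\varepsilon|^2 dx - \int_B \frac{t_\varepsilon^{\2ms+|x|^\alpha}}{\2ms + |x|^\alpha}(u_\varepsilon)^{\2ms+|x|^\alpha} dx,
\]
using \eqref{eq:tiemcan1} for the first term, Lemma \ref{dgiatp} for the second term, and the fact that $t_\varepsilon = 1 + o(1)$. The leading terms combine as $\tfrac12 \Snm^{-n/m} - \tfrac1{\2ms}\Snm^{-n/m} = \tfrac mn \Snm^{-n/m}$ since $\tfrac12 - \tfrac1{\2ms} = \tfrac12 - \tfrac{n-2m}{2n} = \tfrac mn$; the correction terms ($|\ln\varepsilon|\varepsilon^\alpha$ type, or $\varepsilon^{n-2m}$, or $\varepsilon^{n(1-\gamma)}$) all vanish as $\varepsilon\searrow 0$. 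For the derivative, by construction $\langle I'(v_\varepsilon), u_\varepsilon\rangle = 0$, so it suffices to estimate $\langle I'(v_\varepsilon),\phi\rangle$ for $\phi \in \Hmr$ with $\|\nabla^m\phi\|_{L^2}\le 1$ orthogonal to $u_\varepsilon$; this reduces to controlling the remainder $\int_B (u_\varepsilon)^{\2ms+|x|^\alpha-1}\phi\, dx - t_\varepsilon^{-1}\langle\text{(principal Sobolev-critical part)}\rangle$, i.e. the difference between the variable-exponent nonlinearity and the pure $\2ms$ nonlinearity. One writes $t_\varepsilon^{|x|^\alpha} = 1 + O(|x|^\alpha|\ln t_\varepsilon|)$ and $(u_\varepsilon)^{|x|^\alpha} = 1 + O(|x|^\alpha|\ln u_\varepsilon|)$ on the concentration region, then uses Hölder together with the integral estimates already established in Lemmas \ref{Tiemcan} and \ref{dgiatp} to show the remainder is $o(1)$ in the dual norm.

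The main obstacle I expect is the derivative estimate $\|I'(v_\varepsilon)\|_{(\Hmr)^*}\to 0$: unlike the classical Sobolev case where the bubble is an exact solution of the critical equation on $\R^n$, here $u_\varepsilon^*$ solves only the pure-critical polyharmonic equation, and both the cutoff $\eta$ and the space-dependent exponent $|x|^\alpha$ introduce errors. The cutoff errors are standard and controlled by $O(\varepsilon^{n-2m})$ via \eqref{eq:tiemcan1}, but the exponent-perturbation error requires carefully splitting $B$ into the regions $B_{a_\varepsilon}$, $B_{\varepsilon^\gamma}\setminus B_{a_\varepsilon}$, and $B\setminus B_{\varepsilon^\gamma}$ exactly as in the proof of Lemma \ref{Tiemcan}, using $u_\varepsilon^{|x|^\alpha} = 1 + o(1)$ uniformly on each region, and then invoking the pointwise bound $u_\varepsilon \le C\varepsilon^{-(n-2m)/2}(1+|x|^2/\varepsilon^2)^{-(n-2m)/2}$ together with Hölder's inequality against the test function. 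Since all the relevant integral asymptotics are already packaged in Lemmas \ref{Tiemcan} and \ref{dgiatp}, the argument is a bookkeeping exercise rather than a conceptual one, and one concludes that $(v_\varepsilon)$ is a Palais--Smale sequence at level $(m/n)\Snm^{-n/m}$ with no strongly convergent subsequence, which is precisely the assertion that this level is a non-compactness level for $I$.
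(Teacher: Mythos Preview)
Your approach is correct in spirit but considerably more elaborate than what the paper actually does. The paper takes $u_\varepsilon = \eta u_\varepsilon^*$ \emph{directly} (no rescaling by $t_\varepsilon$) and verifies only three things: (i) $I(u_\varepsilon)\to (m/n)\Snm^{-n/m}$, which follows immediately from \eqref{eq:tiemcan1} and Lemma~\ref{dgiatp}; (ii) the Dirichlet energy concentrates at the origin, since $\int_{B_r}|\nabla^m u_\varepsilon|^2\,dx\to \Snm^{-n/m}$ for every fixed $r\in(0,1/2)$; and (iii) $u_\varepsilon\rightharpoonup 0$ in $\Hmr$, because $\int_B u_\varepsilon^2\,dx\to 0$. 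That is all: the paper's notion of ``non-compactness level'' here is simply that there exists a sequence at this energy with no strongly convergent subsequence. It never checks that $I'(u_\varepsilon)\to 0$.

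You, by contrast, aim to produce a genuine Palais--Smale sequence, which is a strictly stronger statement. This buys you a sharper conclusion (an actual PS sequence rather than just an energy-concentrating sequence), but at real cost: the derivative estimate $\|I'(v_\varepsilon)\|_{(\Hmr)^*}\to 0$ is not the routine bookkeeping you suggest. The cutoff error is indeed standard, but controlling $\int_B\big[(u_\varepsilon)^{\2ms-1}-(t_\varepsilon u_\varepsilon)^{\2ms+|x|^\alpha-1}\big]\phi\,dx$ uniformly over $\|\nabla^m\phi\|_{L^2}\le 1$ requires a duality estimate (H\"older with exponent $\2ms/(\2ms-1)$) combined with the region-splitting of Lemma~\ref{Tiemcan}, and the $(u_\varepsilon^*)^{|x|^\alpha}$ factor must be shown to converge to $1$ in an $L^{\2ms/(\2ms-1)}$-weighted sense, not merely pointwise. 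This can be done, but it is an honest additional argument not already packaged in Lemmas~\ref{Tiemcan} and~\ref{dgiatp}. Since the lemma as stated (and as used downstream in the paper) does not require the PS property, you could drop the $t_\varepsilon$ rescaling and the derivative estimate entirely and give the paper's three-line proof instead.
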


\begin{proof}
To proceed, we continue using the function $u_\varepsilon^*$. Let $\eta$ be the cut-off function on $B$ as before and, as always, set $u_\varepsilon = \eta u_\varepsilon^*$. Then, we have $u_\varepsilon \in \Hmr$. Lemma \ref{dgiatp} and \eqref{eq:tiemcan1} yield
\[
I(u_\varepsilon) = \frac12 \int_B |\nabla^m u_\varepsilon|^2 dx - \int_B \frac{1}{\2ms + |x|^{\alpha}} u_\varepsilon(x)^{\2ms + |x|^\alpha} dx \to \frac mn \Snm^{-n/m}.
\]
Furthermore, for any $r\in (0,1/2)$, we have $u_\varepsilon = u_\varepsilon^*$ and therefore
\[
\int_{B_r} |\nabla^m u_\varepsilon|^2 dx = \int_{B_r}|\nabla^m u_\varepsilon^*|^2 dx = \int_{B_{r/\varepsilon}}|\nabla^m u_1^*|^2 dx \to \Snm^{-n/m}
\]
as $\varepsilon \to 0$. Moreover, there holds
\[
\int_B u_\varepsilon(x)^2 dx \leq \int_B(u_\varepsilon^*(x))^2 dx = \varepsilon^{2m} \int_{B_{1/\varepsilon}} (1+ |x|^2)^{-n+ 2m} dx \to 0
\]
as $\varepsilon \to 0$. Hence, the sequence $u_\varepsilon$ is concentrating and converges weakly to $0$ in $\Hmr$. We have shown that the sequence $u_\varepsilon$ is concentrating at $0$, converges weakly to $0$ and does not contain a strongly convergent subsequence. 
\end{proof}

\begin{lemma}\label{lemMPstructure}
The functional $I$ has a mountain-pass structure in the sense of \cite[Theorem $2.2$]{BN}. 
\end{lemma}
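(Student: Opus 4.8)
The plan is to verify the two geometric conditions required by the mountain-pass theorem without the Palais--Smale condition, namely \cite[Theorem $2.2$]{BN}: (i) there exist $\rho>0$ and $\sigma>0$ such that $I(u)\geq\sigma$ whenever $\|\nabla^m u\|_{L^2(B)}=\rho$, and (ii) there exists $e\in\Hmr$ with $\|\nabla^m e\|_{L^2(B)}>\rho$ and $I(e)\leq 0$; of course $I(0)=0$. For the first condition, I would start from the definition of $I$ and bound the nonlinear term from above. The point is that on the region where $u_+$ is small the exponent $\2ms+|x|^\alpha$ is at least $\2ms$, so $u_+^{\2ms+|x|^\alpha}\leq u_+^{\2ms}$ there, while on the region where $u_+$ is large one still has $u_+^{\2ms+|x|^\alpha}\leq u_+^{\2ms+\sup_B|x|^\alpha}=u_+^{\2ms+1}$. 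Hence
\[
\int_B\frac{u_+^{\2ms+|x|^\alpha}}{\2ms+|x|^\alpha}\,dx
\leq \frac{1}{\2ms}\Big(\int_B|u|^{\2ms}dx+\int_B|u|^{\2ms+1}dx\Big).
\]
Now I would invoke Theorem \ref{Supercritical} (equivalently Corollary \ref{varexspace}) together with the classical Sobolev inequality \eqref{SobolevInequality} to control both integrals by powers of $\|\nabla^m u\|_{L^2(B)}$: the first is $\leq \Smn\|\nabla^m u\|_{L^2(B)}^{\2ms}$ and the second is bounded by a constant times $\|\nabla^m u\|_{L^2(B)}^{\2ms+1}$ (using the embedding into $L^{\2ms+1}$, which holds since $\2ms+1<\2ms+\sup_B|x|^\alpha$ is subsumed by Theorem \ref{Supercritical}, or more simply by interpolating once we know $\Hmr\hookrightarrow L_{\2ms+|x|^\alpha}$). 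Thus for $\|\nabla^m u\|_{L^2(B)}=\rho$ small,
\[
I(u)\geq \tfrac12\rho^2 - C_1\rho^{\2ms} - C_2\rho^{\2ms+1}\geq \sigma>0,
\]
since $\2ms>2$; this fixes $\rho$ and $\sigma$.

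For the second condition, I would fix any nonnegative $u\in\Cr$, $u\not\equiv 0$, and consider the ray $t\mapsto I(tu)$ for $t>0$. Then
\[
I(tu)=\frac{t^2}{2}\int_B|\nabla^m u|^2dx-\int_B\frac{t^{\2ms+|x|^\alpha}\,u^{\2ms+|x|^\alpha}}{\2ms+|x|^\alpha}\,dx.
\]
For $t>1$ we have $t^{\2ms+|x|^\alpha}\geq t^{\2ms}$, so the nonlinear term is bounded below by $\frac{t^{\2ms}}{\2ms+\sup_B|x|^\alpha}\int_{\{u>0\}}u^{\2ms+|x|^\alpha}dx$, a positive constant times $t^{\2ms}$. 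Since $\2ms>2$, this term dominates the quadratic term as $t\to+\infty$, so $I(tu)\to-\infty$; choosing $t$ large enough and setting $e=tu$ gives $I(e)\leq 0$ with $\|\nabla^m e\|_{L^2(B)}=t\|\nabla^m u\|_{L^2(B)}>\rho$. Together with $I(0)=0$, this establishes the mountain-pass geometry, and one concludes by citing \cite[Theorem $2.2$]{BN} that the minimax value
\[
c=\inf_{\gamma\in\Gamma}\max_{t\in[0,1]}I(\gamma(t)),\qquad
\Gamma=\{\gamma\in C([0,1],\Hmr):\gamma(0)=0,\ \gamma(1)=e\},
\]
is well-defined and satisfies $c\geq\sigma>0$.

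The main obstacle, such as it is, is getting the estimate $I(u)\geq\sigma$ on the sphere $\|\nabla^m u\|_{L^2(B)}=\rho$ genuinely uniform in $u$: one must take care that the variable exponent $\2ms+|x|^\alpha$ is handled by splitting $B$ according to whether $|u(x)|\lessgtr 1$ rather than by a naive bound, since $|u|^{\2ms+|x|^\alpha}$ is not globally comparable to $|u|^{\2ms}$. Once that splitting is in place, both resulting integrals are controlled by the Sobolev-type inequalities already proved (Theorem \ref{Supercritical}, \eqref{SobolevInequality}), and the remaining arguments are elementary one-variable estimates on the ray $t\mapsto I(tu)$. There is no compactness issue to confront at this stage — the Palais--Smale condition is deliberately avoided here and the non-compactness level is treated separately in Lemmas \ref{lemNonCompactLevel} and \ref{lemUpperBound4Level}.
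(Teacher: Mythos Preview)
Your verification of condition (ii) is fine and matches the paper (which uses the specific choice $u=u_\varepsilon$, needed later for the path in Lemma~\ref{lemUpperBound4Level}, but any nontrivial nonnegative radial test function works for the present lemma).

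The gap is in condition (i). Your splitting $\{u_+<1\}\cup\{u_+\geq 1\}$ leads you to need a bound of the form
\[
\int_B |u|^{\2ms+1}\,dx \leq C\,\|\nabla^m u\|_{L^2(B)}^{\2ms+1},
\]
i.e.\ a continuous embedding $\Hmr\hookrightarrow L^{\2ms+1}(B)$ with \emph{constant} exponent $\2ms+1$. This embedding is false. First, note that $\sup_B|x|^\alpha=1$, so your inequality ``$\2ms+1<\2ms+\sup_B|x|^\alpha$'' is actually an equality, and in any case Theorem~\ref{Supercritical} only controls the \emph{variable}-exponent integral, not the constant one. Second, a direct computation with the truncated bubbles $u_\varepsilon=\eta u_\varepsilon^*$ (which satisfy $\|\nabla^m u_\varepsilon\|_{L^2(B)}\to \Snm^{-n/(2m)}$) gives
\[
\int_B |u_\varepsilon|^{\2ms+1}\,dx \sim \varepsilon^{-(n-2m)/2}\longrightarrow +\infty\quad (\varepsilon\searrow 0),
\]
so no such bound can hold. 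Scaling by $\rho$ shows that even the restricted integral $\int_{\{|u|\geq 1\}}|u|^{\2ms+1}dx$ is not uniformly bounded on the sphere $\|\nabla^m u\|_{L^2(B)}=\rho$.

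The paper bypasses any splitting: for $\|\nabla^m u\|_{L^2(B)}=\tau<1$ one applies Theorem~\ref{Supercritical} to $u/\tau$ to obtain $\int_B |u/\tau|^{\2ms+|x|^\alpha}dx\leq\Unm$, and since $\tau<1$ one has $\tau^{\2ms+|x|^\alpha}\leq\tau^{\2ms}$ pointwise, whence
\[
\int_B |u|^{\2ms+|x|^\alpha}\,dx\leq \Unm\,\tau^{\2ms}
\quad\Longrightarrow\quad
I(u)\geq \frac{\tau^2}{2}-\Unm\,\tau^{\2ms}.
\]
This one-line use of the supercritical inequality is exactly what your final paragraph anticipates as the ``main obstacle'', but the resolution is not the $\{|u|\lessgtr 1\}$ splitting you propose; it is to scale $u$ and exploit $\tau<1$ in the exponent.
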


\begin{proof}
To conclude the lemma, we have to show that, following the notations used in \cite{BN}, the two conditions (2.9) and (2.10) in \cite{BN} are satisfied for a suitable neighborhood $U$, a suitable constant $\rho>0$, and a suitable $v \in \Hmr$. For $U$, we simply take a ball in $\Hmr$ centered at zero with radius $\tau \ll 1$. Then on the boundary of $U$ in $\Hmr$, namely, those functions having $\|\nabla^m u\|_{L^2(B)}=\tau$, we can apply Theorem \ref{Supercritical} to get
\[
\big( \frac 1\tau \big)^{\2ms } \int_B \big| u(x) \big|^{\2ms + |x|^\alpha} dx \leq \int_B \big| \frac {u(x)}\tau\big|^{\2ms + |x|^\alpha} dx \leq \Unm.
\]
Hence,
\[
\int_B \frac{1}{\2ms + |x|^{\alpha}} |u(x)|^{\2ms + |x|^\alpha} dx \leq \int_B  | u(x) |^{\2ms + |x|^\alpha} dx \leq \Unm\tau^{\2ms } .
\]
Putting these facts together, we deduce that
\[
I(u) \geq \frac {\tau^2} 2  - \tau^{\2ms} \Unm
\]
for any $u \in \partial U$. Optimize the right hand side of the preceding inequality gives a suitable $\tau$ and a corresponding constant $\rho >0$. To realize the existence of $v$, we note that $v = R u_\varepsilon \in \Hmr$ for any $R\gg 1$. Moreover, because
\[
I(v) \leq  \frac {R^2} 2 \int_B |\nabla^m u_\varepsilon|^2 dx - R^{\2ms} \int_B \frac{1}{\2ms + |x|^{\alpha}} u_\varepsilon(x)^{\2ms + |x|^\alpha} dx,
\]
we know that $I(v)$ becomes negative if $R$ is large enough, thanks to $\2ms > 2$. Hence, we can choose any $R \gg 1$ such that $v \notin U$ and fix it.
\end{proof}

Let $R \gg 1$ and the corresponding $R u_\varepsilon = v \in \Hmr$ found in the proof of Lemma \ref{lemMPstructure}. Then, we define
\[
\Gamma:= \big\{\gamma :[0,R] \to \Hmr  \text{\rm is continuous},  \gamma(0) =0, \gamma(R) = R u_\varepsilon\big\}
\]
the set of continuous paths connecting $0$ and $v$ in $\Hmr$. Clearly, the set $\Gamma$ is not empty because the straight path  $\gamma_\varepsilon(t) = t u_\varepsilon$ with $t \in [0,R]$ belongs to $\Gamma$. Now we set
\begin{equation}\label{MPlevel}
c = \inf_{\ga \in \Gamma} \max_{u\in \ga} I(u).
\end{equation}
Because $I\big|_{\partial U} \geq \rho >0$, we deduce that $c \geq \rho >0$. The next lemma provides us an upper bound for $c$.

\begin{lemma}\label{lemUpperBound4Level}
The mountain-pass level $c$ of the functional $I$ satisfies
\[
c < \frac mn \Snm^{-n/m}.
\]
\end{lemma}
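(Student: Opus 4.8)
The plan is to estimate the mountain-pass level $c$ from above by evaluating the functional $I$ along the straight path $\gamma_\varepsilon(t) = t u_\varepsilon$, $t \in [0,R]$, which belongs to $\Gamma$, and showing that
\[
\max_{t \in [0,R]} I(t u_\varepsilon) < \frac mn \Snm^{-n/m}
\]
for $\varepsilon > 0$ sufficiently small. Since $c \leq \max_{t} I(tu_\varepsilon)$ by definition of $c$, this will suffice. The first step is to write out, for fixed $\varepsilon$,
\[
I(t u_\varepsilon) = \frac{t^2}2 \int_B |\nabla^m u_\varepsilon|^2\,dx - \int_B \frac{1}{\2ms + |x|^\alpha}\, t^{\2ms + |x|^\alpha} u_\varepsilon(x)^{\2ms + |x|^\alpha}\,dx,
\]
and to insert the sharp asymptotics available to us: $\int_B |\nabla^m u_\varepsilon|^2 = \Snm^{-n/m} + O(\varepsilon^{n-2m})$ from \eqref{eq:tiemcan1}, and the expansion of the second integral coming from Lemma \ref{dgiatp} (or, more flexibly, from Lemma \ref{Tiemcan} applied with $C$ replaced by a varying parameter, since the relevant integral here carries a factor $t^{|x|^\alpha}$ which is bounded above and below on the support once $t$ ranges over a compact interval).

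Next I would carry out the one-variable maximization in $t$. If one ignores the perturbative terms and the $|x|^\alpha$ in the exponent, the function $t \mapsto \frac{t^2}2 A - \frac{t^{\2ms}}{\2ms} A'$ with $A = A' = \Snm^{-n/m}$ is maximized at $t = 1$ with maximum value exactly $(\tfrac12 - \tfrac1{\2ms})\Snm^{-n/m} = \frac mn \Snm^{-n/m}$, using $\tfrac12 - \tfrac1{\2ms} = \tfrac12 - \tfrac{n-2m}{2n} = \tfrac mn$. So the bare leading term is precisely the threshold, and everything hinges on the sign of the first correction. The key point is that Lemma \ref{dgiatp} (equivalently the relevant case of Lemma \ref{Tiemcan} with $\alpha \leq n-2m < n$) shows the subtracted term gains a \emph{strictly positive} extra contribution of order $|\ln\varepsilon|\,\varepsilon^\alpha$ with coefficient $\mathscr C_1 > 0$, while the gain in the Dirichlet term is only $O(\varepsilon^{n-2m})$; since $\alpha \leq n-2m$ forces $|\ln\varepsilon|\,\varepsilon^\alpha \gg \varepsilon^{n-2m}$ as $\varepsilon \searrow 0$, the negative term dominates. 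Concretely I would substitute the near-optimal $t$ (which is $1 + o(1)$) back in, collect terms, and obtain
\[
\max_{t} I(t u_\varepsilon) \leq \frac mn \Snm^{-n/m} - c_0\, |\ln\varepsilon|\,\varepsilon^\alpha + o(|\ln\varepsilon|\,\varepsilon^\alpha) + O(\varepsilon^{n-2m})
\]
for some $c_0 > 0$, which is strictly below $\frac mn \Snm^{-n/m}$ once $\varepsilon$ is small.

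The main obstacle, and the place requiring care, is handling the variable exponent $\2ms + |x|^\alpha$ inside the maximization over $t$: the map $t \mapsto \int_B \frac{t^{\2ms + |x|^\alpha}}{\2ms + |x|^\alpha} u_\varepsilon^{\2ms+|x|^\alpha}\,dx$ is not a clean monomial in $t$, so the critical point equation $\frac{d}{dt} I(tu_\varepsilon) = 0$ does not solve explicitly. The fix is to note that on the essential region where $u_\varepsilon$ is large (a ball of radius $\sim\sqrt\varepsilon$) the weight $|x|^\alpha$ is $o(1)$, so $t^{|x|^\alpha} = 1 + O(|x|^\alpha |\ln t|) = 1 + o(1)$ uniformly for $t$ in a fixed compact set, reducing the problem to the monomial case up to controlled errors; on the complementary region $u_\varepsilon \leq 1$ and the whole contribution is $O(\varepsilon^{n(1-\gamma)})$ as in the proofs of Lemmas \ref{Tiemcan} and \ref{dgiatp}. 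One also checks that the maximizing $t$ stays in a fixed compact subinterval of $(0,R)$ — it cannot be near $0$ (where $I > 0$ is small) nor near $R$ (where $I < 0$) — so the interior maximization is legitimate. Assembling these estimates gives the strict inequality, completing the proof.
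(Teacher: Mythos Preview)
Your proposal is correct and follows essentially the same route as the paper: test the level $c$ against the straight path $t\mapsto tu_\varepsilon$, invoke \eqref{eq:tiemcan1} and Lemma~\ref{dgiatp} for the two pieces, and use $\alpha\le n-2m$ so that the gain $|\ln\varepsilon|\,\varepsilon^\alpha$ from the nonlinear term beats the $O(\varepsilon^{n-2m})$ error from the gradient term. The only technical difference is in how the variable-exponent factor $t^{|x|^\alpha}$ is handled: the paper first pins down the maximizer precisely, showing $t_\varepsilon=1+O(\varepsilon^\alpha|\ln\varepsilon|)$ via the auxiliary function $f(q,t)=(t^q-1)/(t-1)$ and then Taylor-expands, whereas you absorb $t^{|x|^\alpha}=1+O(|x|^\alpha)$ directly on the concentrating ball and only need $t_\varepsilon$ to lie in a fixed compact subset of $(0,\infty)$, after which the elementary bound $\tfrac{t^2}{2}-\tfrac{t^{\2ms}}{\2ms}\le \tfrac{m}{n}$ finishes the job; both arguments are valid and of comparable length.
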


\begin{proof}

Hence, there holds
\begin{equation*}\label{eq:boundc}
c \leq \max_{t\in [0,R]} I(t u_\varepsilon) =: I (t_\varepsilon u_\varepsilon).
\end{equation*}
We first estimate the value of $t_\varepsilon$. Note that $t_\varepsilon \in (0,R)$, then $\frac{d}{dt}I(t u_\varepsilon)\big|_{t= t_\varepsilon} =0$, which implies that
\begin{equation}\label{eq:p1}
\int_B |\nabla^m u_\varepsilon|^2 dx = t_\varepsilon^{\2ms -2} \int_B t_\varepsilon^{|x|^\alpha} |u_\varepsilon|^{\2ms + |x|^\alpha} dx.
\end{equation}
Making use of Lemma \ref{Tiemcan} with $C=1$, we deduce that
\begin{equation}\label{eq:Tiemcan1}
\int_B |u_\varepsilon(x)|^{\2ms + |x|^\alpha} dx = \Snm^{-n/m} + \mathscr C_1 |\ln \varepsilon| \varepsilon^{\alpha} + o(\varepsilon^\alpha |\ln \vep|)_{\varepsilon \searrow 0}.
\end{equation}
Moreover, from \eqref{eq:tiemcan1}, we have
\begin{equation}\label{eq:tiemcan111}
\int_B |\nabla^m u_\varepsilon|^2 dx = \Snm^{-n/m} + O(\varepsilon^{n-2m})_{\varepsilon \searrow 0}.
\end{equation}
We claim that $t_\varepsilon \to 1$ as $\varepsilon \to 0$. Indeed, let us denote
\[
a_{\inf} = \liminf_{\varepsilon \searrow 0} t_\varepsilon \leq \limsup_{\varepsilon\to 0} t_\varepsilon = a_{\sup}
\]
and suppose that
\[
a_{\sup}>1.
\] 
Then there is some $\kappa >1$ and a subsequence $(t_{\varepsilon_i})$ such that $\varepsilon_i \to 0$ and $t_{\varepsilon_i}> \kappa$ for any $i$. This fact together with \eqref{eq:p1} and \eqref{eq:Tiemcan1} implies that
\[
\Snm^{-n/m} + O(\varepsilon_i^{n-2m})_{i \nearrow +\infty} \geq \kappa^{\2ms-2} \Big( \Snm^{-n/m} + \mathscr C_1 |\ln \varepsilon_i| \varepsilon_i^{\alpha} + o(\varepsilon_i^\alpha|\ln \vep_i|)_{i \nearrow +\infty} \Big) .
\]
Sending $i \nearrow +\infty$ to get a contradiction because $\kappa >1$ and $\2ms -2 > 0$. Hence, $a_{\sup}\leq 1$. By the same argument, we can also prove that
\[
a_{\inf} \geq 1.
\] 
This proves the claim, namely, $t_\varepsilon \to 1$ as $\varepsilon \to 0$. Consequently, we can choose $\varepsilon >0$ small enough such that $1/2 \leq t_\varepsilon \leq 3/2$. Moreover, we always have
\[
\2ms -2\leq \2ms+ |x|^\alpha -2 \leq \2ms -1
\]
for any $x \in B$. Consider the function $f$ defined on $[\2ms -2,\2ms -1] \times [1/2,3/2]$ by
\[
f(q,t) = \begin{cases}
\frac{t^q -1}{t-1} &\mbox{if $t\neq 1$,}\\
q&\mbox{if $t=1$.}
\end{cases}
\]
Obviously, the function $f$ is continuous and $f >0$ on $[\2ms -2,\2ms -1] \times [1/2,3/2]$. Hence 
\[
C_0:= \inf\{f(q,t)   :  (q,t) \in [\2ms -2,\2ms -1] \times [1/2,3/2]\} > 0.
\]
For $\varepsilon >0$ small enough, we have from \eqref{eq:p1}, \eqref{eq:Tiemcan1}, \eqref{eq:tiemcan111}, and $\alpha < \min\{n-2m, n/2\}$ that
\begin{align*}
C \varepsilon^{\alpha} (-\ln \varepsilon) + o(\varepsilon^\alpha|\ln \vep|)_{\varepsilon \searrow 0}&= \Big|\int_B |\nabla^m u_\varepsilon|^2 dx -\int_B |u_\varepsilon|^{\2ms + |x|^\alpha} dx\Big|\\
&=\Big|\int_B (t_\varepsilon^{\2ms-2+ |x|^\alpha} -1) |u_\varepsilon|^{\2ms + |x|^\alpha} dx\Big|\\
&=|t_\varepsilon -1| \int_B f(\2ms -2 + |x|^\al,t_\varepsilon) |u_\varepsilon|^{\2ms + |x|^\alpha} dx\\
&\geq C_0 |t_\varepsilon -1| \big[ \Snm^{-n/m} + \mathscr C_1 \varepsilon^{\alpha} (-\ln \varepsilon) + o(\varepsilon^\alpha|\ln \vep|)_{\varepsilon \searrow 0} \big] .
\end{align*}
Hence, $t_\varepsilon = 1 + R_\varepsilon$ with $R_\varepsilon =O(\varepsilon^\al(-\ln \varepsilon))_{\varepsilon \searrow 0}$. We have
\begin{align*}
\int_B \frac{1}{\2ms + |x|^{\alpha}} |t_\varepsilon u_\varepsilon(x)|^{\2ms + |x|^\alpha} dx 
&= \int_B \frac{(1+ R_\varepsilon)^{\2ms+ |x|^\alpha} -1}{\2ms + |x|^{\alpha}} |u_\varepsilon(x)|^{\2ms + |x|^\alpha} dx \\
&\quad + \int_B \frac{1}{\2ms + |x|^{\alpha}} |u_\varepsilon(x)|^{\2ms + |x|^\alpha} dx\\
&=I + II.
\end{align*} 
Using Taylor's expansion, we have 
\begin{align*}
(1 + R_\varepsilon)^{\2ms+ |x|^\alpha} &= 1 + (\2ms+ |x|^\alpha)R_\varepsilon \\
&\quad + (\2ms+ |x|^\alpha)(\2ms + |x|^\alpha -1)R_\varepsilon^2 \int_0^1 (1+ s R_\varepsilon)^{\2ms+ |x|^\alpha -2} (1-s) ds \\
&= 1 + (\2ms+ |x|^\alpha)R_\varepsilon + O(R_\varepsilon^2),
\end{align*}
and by \eqref{eq:Tiemcan1} we have
\begin{equation}\label{eq:I}
I = R_\varepsilon \int_B |u_\varepsilon(x)|^{\2ms + |x|^\alpha} dx + O(R_\varepsilon^2) = R_\varepsilon \Snm^{-n/m} + O(R_\varepsilon^2). 
\end{equation}
Putting \eqref{eq:I} and the estimate for II in \eqref{eq:II} gives
\begin{align*}
\int_B \frac{1}{\2ms + |x|^{\alpha}} &  |t_\varepsilon u_\varepsilon(x)|^{\2ms + |x|^\alpha} dx\\
 =& \frac1{\2ms} \Snm^{-n/m} +\frac {\mathscr C_1}{\2ms} |\ln \varepsilon| \varepsilon^{\alpha} + o(\varepsilon^\alpha|\ln \vep|) + R_\varepsilon \Snm^{-n/m} + O(R_\varepsilon^2). 
\end{align*}
Keep in mind that $R_\varepsilon = O(|\ln \varepsilon| \varepsilon^\alpha  )$ and that $\alpha \leq n-2m$. Therefore,
\begin{align*}
c \leq I(t_\varepsilon u_\varepsilon) &= \frac{(1+ R_\varepsilon)^2}2 \int_B |\nabla^m u_\varepsilon|^2 dx -\int_B \frac{1}{\2ms + |x|^{\alpha}} |t_\varepsilon u_\varepsilon(x)|^{\2ms + |x|^\alpha} dx\\
&=\frac12(1+ 2R_\varepsilon + R_\varepsilon^2)(\Snm^{-n/m} + O(\varepsilon^{n-2m})) -\frac1{\2ms} \Snm^{-n/m} - \frac {\mathscr C_1}{\2ms} |\ln \varepsilon| \varepsilon^{\alpha} \\
&\quad - R_\varepsilon \Snm^{-n/m} + o(\varepsilon^\alpha |\ln \vep|) + O(R_\varepsilon^2)\\
&= \frac mn \Snm^{-n/m}-\frac {\mathscr C_1}{\2ms} |\ln \varepsilon| \varepsilon^{\alpha}+ o(\varepsilon^{\alpha} |\ln \vep|).
\end{align*}
Taking $\varepsilon >0$ small enough we deduce that
\[
c < \frac mn \Snm^{-n/m}
\]
as claimed.
\end{proof}

In view of Lemmas \ref{lemMPstructure} and \ref{lemUpperBound4Level} above, there is a (PS) sequence $(u_j)_j$ in $\Hmr$ such that
\[
I(u_j) \to c \in \big(0,  \frac mn \Snm^{-n/m}\big)
\]
and
\[
I'(u_j) =o(1)_{j \nearrow +\infty}.
\] 

\begin{lemma}\label{lemPSisBounded}
The sequence $(u_j)$ is bounded in $\Hmr$.
\end{lemma}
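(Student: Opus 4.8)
The plan is to run the standard argument that bounds mountain-pass Palais--Smale sequences, exploiting that the variable exponent $\2ms + |x|^\alpha$ stays above the fixed superquadratic exponent $\2ms > 2$ everywhere on $B$. Since $I$ is of class $C^1$ on $\Hmr$ by Theorem \ref{Supercritical}, testing $I'(u_j)$ against $u_j$ itself is legitimate, and using the pointwise identity $u_+^{\2ms+|x|^\alpha-1}u = u_+^{\2ms+|x|^\alpha}$ one has
\[
\langle I'(u_j), u_j\rangle = \int_B |\nabla^m u_j|^2\, dx - \int_B (u_j)_+^{\2ms + |x|^\alpha}\, dx .
\]

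The first step is to form the combination $I(u_j) - \tfrac1{\2ms}\langle I'(u_j), u_j\rangle$. A direct computation, together with the identity $\tfrac12 - \tfrac1{\2ms} = m/n$, gives
\[
I(u_j) - \frac1{\2ms}\langle I'(u_j), u_j\rangle = \frac mn \int_B |\nabla^m u_j|^2\, dx + \int_B \Big(\frac1{\2ms} - \frac1{\2ms + |x|^\alpha}\Big) (u_j)_+^{\2ms + |x|^\alpha}\, dx .
\]
Because $|x|^\alpha \geq 0$ on $B$, one has $\2ms + |x|^\alpha \geq \2ms$, so the integrand in the last term is nonnegative; hence the right-hand side is bounded below by $\tfrac mn\|\nabla^m u_j\|_{L^2(B)}^2$.

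The second step is to bound the same quantity from above. Since $I(u_j) \to c$, the sequence $(I(u_j))_j$ is bounded, say $|I(u_j)| \leq C_1$; and since $\|I'(u_j)\|_{(\Hmr)^*} =: \vep_j \to 0$, we get $|\langle I'(u_j), u_j\rangle| \leq \vep_j\|\nabla^m u_j\|_{L^2(B)}$. Writing $t_j = \|\nabla^m u_j\|_{L^2(B)}$, combining the two steps yields an inequality of the form $\tfrac mn\, t_j^2 \leq C_1 + \tfrac{\vep_j}{\2ms}\, t_j$ with $\vep_j \to 0$; solving this quadratic inequality in $t_j$ gives at once $\sup_j t_j < +\infty$, which is precisely the boundedness of $(u_j)_j$ in $\Hmr$.

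I do not expect a genuine obstacle here. The only point worth recording is that supercriticality works \emph{in our favour}: the exponent never drops below $\2ms$, so the Ambrosetti--Rabinowitz-type manipulation with the fixed constant $\theta = \2ms$ goes through verbatim, and no restriction on $\alpha$ beyond $\alpha > 0$ is needed for this step.
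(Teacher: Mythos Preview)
Your proof is correct and follows essentially the same approach as the paper: both exploit the Ambrosetti--Rabinowitz-type combination of $I(u_j)$ and $\langle I'(u_j),u_j\rangle$ with the fixed constant $\theta=\2ms$, using that $\2ms+|x|^\alpha\geq \2ms$ to discard the nonlinear remainder. The paper merely organizes the two ingredients (from $I(u_j)\to c$ and $I'(u_j)\to 0$) as separate estimates before combining them, whereas you form the linear combination $I(u_j)-\tfrac1{\2ms}\langle I'(u_j),u_j\rangle$ directly; the arithmetic is the same.
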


\begin{proof}
The argument is standard. Indeed, because $I'(u_j) \to 0$ we have
\[
\int_B (u_j)_+^{\2ms + |x|^\alpha} dx = \int_B |\nabla^m u_j|^2 dx + o(1)_{j \nearrow +\infty} \|\nabla^m u_j\|_{L^2(B)}.
\]
Because $I(u_j) \to c$ we obtain
\begin{align*}
\int_B |\nabla^m u_j|^2 dx &= 2c + \int_B \frac{2}{\2ms + r^\alpha} |u_j|^{\2ms + |x|^\alpha} dx + o(1)_{j \nearrow +\infty} \\
& \leq 2c+ \frac{2}{\2ms} \int_B (u_j)_+^{\2ms + |x|^\alpha} dx + o(1)_{j \nearrow +\infty}.
\end{align*}
Combining these two estimates, we arrive at
\[
\int_B |\nabla^m u_j|^2 dx \leq \2ms c + o(1)_{j \nearrow +\infty} \|\nabla^m u_j\|_{L^2(B)},
\]
which implies that the sequence $(u_j)$ is bounded in $\Hmr$. 
\end{proof}

In view of Lemma \ref{lemPSisBounded}, up to a subsequence, still denoted by $(u_j)$, there is some $u \in \Hmr$ such that \begin{itemize}
  \item $u_j \rightharpoonup u$ weakly in $\Hmr$, 
  \item $u_j \to u$ strongly in $H_{0,r}^k(B)$ for any $0\leq k < m$, 
  \item $u_j \to u$ a.e. in $B$, and 
  \item $\|\nabla^m u_j\|_{L^2(B)} \to l\geq 0$,
\end{itemize}
as $j \to +\infty$. Hence, we have
\begin{equation}\label{eq:1}
\int_{B} (u_j)_+^{\2ms + |x|^\alpha} dx = l^2 + o(1)_{j \nearrow +\infty} 
\end{equation}
and
\begin{equation}\label{eq:12}
\int_B \frac{(u_j)_+^{\2ms + |x|^\alpha}}{\2ms + |x|^\alpha} dx =\frac{l^2}2 -c + o(1)_{j \nearrow +\infty}.
\end{equation}
Evidently, $l >0$ because if otherwise \eqref{eq:12} gives a contradiction if we let $j$ large enough because $c >0$. 

We now rule out the possibility that $u \equiv 0$. 

\begin{lemma}\label{nontriviallimit}
The weak limit $u$ is non-trivial, namely $u \equiv 0$.
\end{lemma}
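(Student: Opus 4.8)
The plan is to argue by contradiction: assume $u \equiv 0$ and derive $c \geq \frac mn \Snm^{-n/m}$, which contradicts Lemma \ref{lemUpperBound4Level}. The first ingredient is a uniform pointwise bound for the $(u_j)$. Since $\|\na^m u_j\|_{L^2(B)} \to l$, the sequence is bounded in $\Hmr$, so by the Hardy--Rellich inequality \eqref{eq:HR} the integrals $\int_0^1 (u_j'(r))^2 r^{n-2m+1}\,dr$ are bounded uniformly in $j$, and repeating the computation that led to \eqref{eq:est11} produces a $j$-independent constant $C_1 > 0$ with $|u_j(x)| \leq C_1 |x|^{-\frac{n-2m}2}$ for a.e. $x \in B$.

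Next I compare $\int_B (u_j)_+^{\2ms + |x|^\al}\,dx$ with the Sobolev quotient. Fix $\vep > 0$. Since $r \mapsto \big( C_1^2\, r^{2m-n} \big)^{r^\al/2} \to 1$ as $r \searrow 0$, choose $\de \in (0,1)$ so that this quantity is $\leq 1 + \vep$ on $(0,\de)$; then the pointwise bound gives $(u_j)_+^{\2ms + |x|^\al} \leq (1+\vep)(u_j)_+^{\2ms}$ on $B_\de$, whence, using the characterization of $\Snm$ and $\Smn = \Snm^{\2ms}$,
\[
\int_{B_\de} (u_j)_+^{\2ms + |x|^\al}\,dx \leq (1+\vep)\int_B |u_j|^{\2ms}\,dx \leq (1+\vep)\,\Smn\,\|\na^m u_j\|_{L^2(B)}^{\2ms}.
\]
On $B \setminus B_\de$ the integrands are uniformly bounded (again by the pointwise bound) and tend to $0$ a.e. because $u \equiv 0$, so Lebesgue's dominated convergence theorem gives $\int_{B\setminus B_\de}(u_j)_+^{\2ms + |x|^\al}\,dx \to 0$. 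Adding the two contributions, invoking \eqref{eq:1}, letting $j \to +\infty$ and then $\vep \searrow 0$, and using $\|\na^m u_j\|_{L^2(B)} \to l$, I obtain $l^2 \leq \Smn\, l^{\2ms}$.

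Since $l > 0$ was already established, dividing by $l^2$ yields $\Smn\, l^{\2ms - 2} \geq 1$, which, because $\Smn = \Snm^{\2ms}$ and $\2ms - 2 = \tfrac{4m}{n-2m}$, is equivalent to $l^2 \geq \Snm^{-n/m}$. On the other hand $\2ms + |x|^\al \geq \2ms$ on $B$, so \eqref{eq:12} gives $\tfrac{l^2}2 - c \leq \tfrac1{\2ms} l^2$, i.e. $c \geq \big( \tfrac12 - \tfrac1{\2ms} \big) l^2 = \tfrac mn l^2 \geq \tfrac mn \Snm^{-n/m}$, the announced contradiction, so $u \not\equiv 0$. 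The only point requiring care is the passage through the variable exponent near the origin in the second paragraph — ensuring that replacing $\2ms + |x|^\al$ by $\2ms$ costs only a factor $1+\vep$; this is precisely the flattening device already used in the proof of Theorem \ref{Attain}, now combined with the Hardy--Rellich pointwise bound, and everything else is routine.
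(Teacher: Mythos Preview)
Your proof is correct and follows essentially the same route as the paper: Hardy--Rellich yields the uniform pointwise decay, the flattening device near the origin together with dominated convergence away from it gives $l^2 \leq \Smn\, l^{\2ms}$ and hence $l^2 \geq \Snm^{-n/m}$, and this is then played off against the energy identities \eqref{eq:1}--\eqref{eq:12} to contradict Lemma \ref{lemUpperBound4Level}. The only (cosmetic) difference is in the final step: the paper works harder to prove the asymptotic \emph{equality} $\int_B \frac{(u_j)_+^{\2ms+|x|^\alpha}}{\2ms+|x|^\alpha}\,dx = \frac{1}{\2ms}\int_B (u_j)_+^{\2ms+|x|^\alpha}\,dx + o(1)$, whence $l^2 = \tfrac nm c$, whereas you use the trivial inequality $\frac{1}{\2ms+|x|^\alpha} \leq \frac{1}{\2ms}$ to obtain directly $c \geq \tfrac mn l^2$, which is all that is needed for the contradiction and is slightly more economical.
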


\begin{proof}
Note that $\|\nabla^m u_j\|_{L^2(B)} \to l$. By \eqref{eq:HR}, we have
\[
\int_0^1 |u_j'(r)|^2 r^{n-2m +1} dr \leq C
\]
for some constant $C >0$ independent of $j$. Then, via a density argument, the estimate \eqref{eq:est1} implies that
\begin{equation}\label{eq:est1j}
|u_j(x)| \leq \Big( \frac C{n-2m+2} \Big) ^{1/2} |x|^{-\frac{n-2m+2}2}
\end{equation}
for a.e. $x$. Since $u_j\to 0$ a.e. in $B$, by Lebesgue's dominated convergence theorem, we have
\begin{equation}\label{eq:truna}
\begin{aligned}
\lim_{j \nearrow +\infty} \int_{B\setminus B_a} |u_j|^{\2ms} dx &= \lim_{j \nearrow +\infty} \int_{B\setminus B_a} |u_j|^{\2ms+|x|^\alpha} dx \\
&=\lim_{j \nearrow +\infty} \int_{B\setminus B_a}\frac{ |u_j|^{\2ms+|x|^\alpha}}{\2ms + |x|^\alpha} dx =0
\end{aligned}
\end{equation}
for any $a>0$. Let $a_0 >0$ be such that $ \big( \frac C{n-2m+2} \big) ^{1/2} a_0^{-\frac{n-2m+2}2} \geq 1$. Hence, by \eqref{eq:est1j} and \eqref{eq:truna}, for any $a < a_0$ we have
\begin{align*}
\int_B (u_j)_+^{\2ms + |x|^\alpha} dx &= \int_{B_a} (u_j)_+^{\2ms + |x|^\alpha} dx + o(1)_{j \nearrow +\infty}\\
&\leq \Big( \big( \frac C{n-2m+2} \big) ^{1/2} a^{-\frac{n-2m+2}2} \Big) ^{a^\alpha} \int_B |u_j|^{\2ms} dx + o(1)_{j \nearrow +\infty}\\
&\leq \Big( \big( \frac C{n-2m+2} \big) ^{1/2} a^{-\frac{n-2m+2}2} \Big) ^{a^\alpha} \Smn \|\nabla^m u_j\|_{L^2(B)}^{ \2ms} + o(1)_{j \nearrow +\infty},
\end{align*}
here we have used the Sobolev inequality once. Thanks to $\alpha > 0$, letting $a\to 0$ gives
\[
\int_B (u_j)_+^{\2ms + |x|^\alpha} dx \leq \Smn \|\nabla^m u_j\|_{L^2(B)}^{ \2ms} + o(1)_{j \nearrow +\infty},
\]
which, after letting $j \to +\infty$ and making use of \eqref{eq:1}, yields
\begin{equation}\label{eq:2}
l \geq \Snm^{-n/(2m)}.
\end{equation}
With a help from \eqref{eq:truna}, we have
\begin{align*}
\Big|\int_B \frac{(u_j)_+^{\2ms + |x|^\alpha}}{\2ms + |x|^\alpha} dx &-\int_B \frac{(u_j)_+^{\2ms + |x|^\alpha}}{\2ms} dx\Big| \\
&=\int_B \frac{|x|^\alpha}{\2ms(\2ms + |x|^\alpha)} (u_j)_+^{\2ms + |x|^\alpha} dx\\
&\leq \frac{a^\alpha}{(\2ms)^2} \int_{B_a} |u_j|^{\2ms + |x|^\alpha} dx + \frac1{(\2ms)^2} \int_{B\setminus B_a} |u_j|^{\2ms + |x|^\alpha} dx \\
&\leq \frac{a^\alpha}{(\2ms)^2} \int_{B} |u_j|^{\2ms + |x|^\alpha} dx + o(1)_{j \nearrow +\infty}.
\end{align*}
Letting $a\searrow 0$ and making use of \eqref{eq:12} to get
\[
 \frac 1{\2ms} \int_B (u_j)_+^{\2ms + |x|^\alpha} dx =\int_B \frac{(u_j)_+^{\2ms + |x|^\alpha}}{\2ms + |x|^\alpha} dx + o(1)_{j \nearrow +\infty} = \frac{l^2}{2} -c + o(1)_{j \nearrow +\infty}.
\] 
Now letting $j \nearrow +\infty$ and using \eqref{eq:1}, we get
\[
l^2 = \frac nm c,
\]
which, by Lemma \ref{lemUpperBound4Level}, gives
\[
l^2< \Snm^{-n/m},
\]
which contradicts  \eqref{eq:2}. Hence, we have just shown that $u\not\equiv 0$. 
\end{proof}

\begin{lemma}\label{lemExistenceWeak}
The weak limit $u$ solves \eqref{eq:supercriticaleq}.
\end{lemma}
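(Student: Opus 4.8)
The plan is to pass to the limit as $j\nearrow+\infty$ in the relation
\[
\langle I'(u_j),\phi\rangle=\int_B\langle\nabla^m u_j,\nabla^m\phi\rangle\,dx-\int_B(u_j)_+^{\2ms+|x|^\alpha-1}\phi\,dx=o(1)_{j\nearrow+\infty},
\]
which holds for every $\phi\in\Cr$ because $(u_j)$ is a Palais--Smale sequence for $I$, and to identify the limit with $\langle I'(u),\phi\rangle$. Since $u_j\rightharpoonup u$ weakly in $\Hmr$ and $\nabla^m\phi\in L^2(B)$, the first integral converges to $\int_B\langle\nabla^m u,\nabla^m\phi\rangle\,dx$ with no effort, so everything reduces to the convergence of the nonlinear term.

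For the nonlinear term I would recycle the uniform pointwise control coming from the Hardy--Rellich inequality: since $(u_j)$ is bounded in $\Hmr$, \eqref{eq:HR} together with \eqref{eq:est1} yields a constant $C$, independent of $j$, with $|u_j(x)|\le C|x|^{-(n-2m)/2}$ for a.e.\ $x\in B$, and the same bound for $u$. Let $K:=\operatorname{supp}\phi$, a compact subset of $B$, and fix $\rho>0$ small. On $K\setminus B_\rho$ the $u_j$ are uniformly bounded, hence so are the $(u_j)_+^{\2ms+|x|^\alpha-1}$; on $K\cap B_\rho$, using $\2ms+|x|^\alpha-1\le\2ms+\rho^\alpha-1$ and the pointwise bound, one gets $(u_j)_+^{\2ms+|x|^\alpha-1}\le\max\{1,\,C^{\2ms+\rho^\alpha-1}|x|^{-\frac{n-2m}{2}(\2ms+\rho^\alpha-1)}\}$, which lies in $L^1(B_\rho)$ as soon as $\rho$ is small enough, because $\frac{n-2m}{2}(\2ms-1)=\frac{n+2m}{2}<n$ (this is exactly where $m<n/2$ enters). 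Thus $\{(u_j)_+^{\2ms+|x|^\alpha-1}\}_j$ is dominated on $K$ by a fixed $L^1$ function; since $\phi$ is bounded and $u_j\to u$ a.e., the dominated convergence theorem gives $\int_B(u_j)_+^{\2ms+|x|^\alpha-1}\phi\,dx\to\int_B u_+^{\2ms+|x|^\alpha-1}\phi\,dx$. Hence $\langle I'(u),\phi\rangle=0$ for all $\phi\in\Cr$, and by density for all $\phi\in\Hmr$; that is, $u\in\Hmr$ weakly solves $(-\De)^m u=u_+^{\2ms+|x|^\alpha-1}$ with $\pa_r^j u=0$ on $\pa B$, $j=0,\dots,m-1$.

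It then remains to upgrade this to a solution of \eqref{eq:supercriticaleq}, i.e.\ to prove $u>0$ in $B$. Put $g:=u_+^{\2ms+|x|^\alpha-1}\ge0$; if $g\equiv0$ then $(-\De)^m u=0$ with $u\in\Hmr$ forces $u\equiv0$, contradicting Lemma \ref{nontriviallimit}, so $g\not\equiv0$. The same pointwise bound shows $g\in L^p(B)$ for some $p>1$ (again thanks to $\frac{n+2m}{2}<n$), so elliptic regularity makes $u$ regular enough to be represented through Boggio's Green function $G_{m,B}$ of $(-\De)^m$ on the ball under the boundary conditions $\pa_r^j u=0$, $j=0,\dots,m-1$, namely $u(x)=\int_B G_{m,B}(x,y)g(y)\,dy$ (see \cite{GGS}). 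Since $G_{m,B}>0$ on $B\times B$ and $g\ge0$ is not identically zero, it follows that $u>0$ in $B$; in particular $u_+=u$, and therefore $u$ solves \eqref{eq:supercriticaleq}.

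I expect the convergence of the nonlinear term to be the delicate point: the target is a variable exponent Lebesgue space and $(u_j)$ may concentrate at the origin, so a dominating function cannot come from the energy bound alone --- it has to be manufactured from the Hardy--Rellich pointwise estimate, which simultaneously controls the region near $0$ (where the exponent varies and $u_j$ can blow up) and furnishes the $L^1$ majorant. A secondary subtlety, present only when $m\ge2$, is that $u\ge0$ cannot be obtained by testing against $u_-$ (which need not belong to $\Hmr$) and must instead rely on the positivity of the polyharmonic Green function on the ball.
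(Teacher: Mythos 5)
Your proposal is correct and follows essentially the same route as the paper: pass to the limit in $\langle I'(u_j),\phi\rangle\to 0$ using the weak convergence for the leading term and the Hardy--Rellich pointwise bound $|u_j(x)|\le C|x|^{-(n-2m)/2}$ to dominate the nonlinear term, then invoke the positivity-preserving property of $(-\De)^m$ on the ball (Boggio's Green function, i.e.\ \cite[Theorem 5.1]{GGS}) together with Lemma \ref{nontriviallimit} to upgrade $u\not\equiv 0$ to $u>0$. The only difference is that you spell out the dominated-convergence details that the paper leaves implicit.
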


\begin{proof}
Since $u_j\rightharpoonup u$ weakly in $\Hmr$ and $I'(u_j) \to 0$, we conclude that $u$ is a non-trivial, weak solution to 
\[
\left\{
\begin{aligned}
(-\De)^m u  &=  u_+^{\2ms + |x|^\alpha -1}& \mbox{ in } & B,\\
\pa_r^j u &= 0&\mbox{ on }& \partial B , \quad   j= 0,1,\ldots,m-1.
\end{aligned}
\right.
\]
Making use of \cite[Theorem 5.1]{GGS} we deduce that $u\geq 0$ in $B$. By Lemma \ref{nontriviallimit}, we know that $u_+^{\2ms + |x|^\alpha -1} \geq 0$ and is not identical $0$ in $B$. Again, by \cite[Theorem 5.1]{GGS}, we further get $u >0$ in $B$. Therefore, $u$ is indeed a weak solution to \eqref{eq:supercriticaleq}.
\end{proof}

\section*{Acknowledgments}

The research of Q.A.N is funded by the Vietnam National University, Hanoi (VNU) under project number QG.19.12. The research of V.H.N is partially funded by the Simons Foundation Grant Targeted for Institute of Mathematics, Vietnam Academy of Science and Technology. 


\addtocontents{toc}{\protect\setcounter{tocdepth}{0}}

\section*{ORCID iDs}

\noindent Qu\cfac oc Anh Ng\^o: 0000-0002-3550-9689

\noindent Van Hoang Nguyen: 0000-0002-0030-5811

\addtocontents{toc}{\protect\setcounter{tocdepth}{2}}







\begin{thebibliography}{9999999}


\bibitem[BS10]{BFS}
\textsc{J.F. Bonder and A. Silva}, 
Concentration--compactness principle for variable exponent spaces and applications, \textit{Electron. J. Differential Equations}, 2010, No. 141, 18 pp.

\bibitem[BN83]{BN}
\textsc{H. Brezis and L. Nirenberg}, 
Positive solutions of nonlinear elliptic equations involving critical Sobolev exponents, \textit{Comm. Pure Appl. Math.} \textbf{36} (1983), pp. 437--477.

\bibitem[CLL18]{CLL18}
\textsc{D. Cao, S. Li, and Z. Liu},
Nodal solutions for a supercritical semilinear problem with variable exponent,
\textit{Calc. Var. Partial Differential Equations} \textbf{57} (2018), no. 2, Art. 38, 19 pp. 

\bibitem[ORU16]{doO}
\textsc{J. do \'O, B. Ruf, and P. Ubilla}, 
On supercritical Sobolev type inequalities and related elliptic equations, \textit{Calc. Var. Partial Differential Equations} \textbf{55} (2016), no. 4, Art. 83, 18 pp. 

\bibitem[GGS10]{GGS}
\textsc{F. Gazzola, H.C. Grunau, and G. Sweers},
\textit{Polyharmonic boundary value problems. Positivity preserving and nonlinear higher order elliptic equations in bounded domains}, Lecture Notes in Mathematics, \textbf{1991}, Springer-Verlag, Berlin, 2010. xviii+423 pp. 

\bibitem[MT19]{MT2019}
\textsc{G. Mancinia and P.D. Thizy},
Non-existence of extremals for the Adimurthi--Druet inequality,
\textit{J. Differential Equations} \textbf{266} (2019), pp. 1051--1072.

\bibitem[NN19]{NN19}
\textsc{Q.A. Ng\^o and V.H. Nguyen},
Supercritical Moser--Trudinger inequalities and related higher order elliptic problems, in preparation.

\bibitem[Ngu15]{VHN}
\textsc{V.H. Nguyen},
Sharp weighted Sobolev and Gagliardo--Nirenberg inequalities on half--spaces via mass transport and consequences, 
\textit{Proc. Lond. Math. Soc.} (3) \textbf{111} (2015), pp. 127--148.

\bibitem[Ngu19]{VHN-PROCA}
\textsc{V.H. Nguyen},
Remarks on the Moser--Trudinger type inequality with logarithmic weights in dimension $N$, 
\textit{Proc. Amer. Math. Soc.}, in press,
doi:10.1090/proc/14566. 


\bibitem[TZ07]{TZ}
\textsc{A. Tertikas and N.B. Zographopoulos},
Best constants in the Hardy--Rellich inequalities and related improvements,
\textit{Adv. Math.} \textbf{209} (2007), pp. 407--459.

\bibitem[WX99]{WeiXu99}
\textsc{J.C. Wei and X. Xu},
Classification of solutions of higher order conformally invariant equations,
\textit{Math. Ann.} \textbf{313} (1999), pp. 207--228.

\end{thebibliography}
\end{document}